\newtheorem{theorem}{Theorem}[section]
\newtheorem{proposition}[theorem]{Proposition}
\newtheorem{lemma}[theorem]{Lemma}
\newtheorem{corollary}[theorem]{Corollary}
\theoremstyle{definition}
\newtheorem{definition}[theorem]{Definition}
\newtheorem{example}[theorem]{Example}
\newtheorem{remark}[theorem]{Remark}
\newtheorem{question}[theorem]{Question}
\newcommand{\ir}{{\mathsf{Irr}}}
\newcommand{\cl}{{\rm cl}}
\newcommand{\ii}{{\rm int}}
\newcommand{\ua}{\mathord{\uparrow}}
\newcommand{\da}{\mathord{\downarrow}}
\newcommand{\mk}{\mathord{\mathsf{K}}}
\journal{Topology and its applications}
\begin{document}

\begin{frontmatter}

%% Title, authors and addresses

%% use the tnoteref command within \title for footnotes;
%% use the tnotetext command for theassociated footnote;
%% use the fnref command within \author or \address for footnotes;
%% use the fntext command for theassociated footnote;
%% use the corref command within \author for corresponding author footnotes;
%% use the cortext command for theassociated footnote;
%% use the ead command for the email address,
%% and the form \ead[url] for the home page:
%% \title{Title\tnoteref{label1}}
%% \tnotetext[label1]{}
%% \author{Name\corref{cor1}\fnref{label2}}
%% \ead{email address}
%% \ead[url]{home page}
%% \fntext[label2]{}
%% \cortext[cor1]{}
%% \address{Address\fnref{label3}}
%% \fntext[label3]{}

\title{Coincidence of the upper Vietoris topology and the Scott topology \tnoteref{t1}}
\tnotetext[t1]{This research was supported by the National Natural Science Foundation of China (Nos. 11661057, 11971287) and NSF of Jiangxi Province (20192ACBL20045)}

%% use optional labels to link authors explicitly to addresses:
%% \author[label1,label2]{}
%% \address[label1]{}
%% \address[label2]{}

\author[X. Xu]{Xiaoquan Xu\corref{mycorrespondingauthor}}
\cortext[mycorrespondingauthor]{Corresponding author}
\ead{xiqxu2002@163.com}
\address[X. Xu]{School of Mathematics and Statistics,
Minnan Normal University, Zhangzhou 363000, China}
\author[Z. Yang]{Zhongqiang Yang}
\address[Z. Yang]{Department of Mathematics, Shantou University, Shantou 515063,
China}
\ead{zqyang@stu.edu.cn}

\begin{abstract}

For a $T_0$ space $X$, let $\mk (X)$ be the poset of all compact saturated sets of $X$ with the reverse inclusion order. The space $X$ is said to have property Q if for any $K_1, K_2\in \mk (X)$, $K_2\ll K_1$ in $\mk (X)$ if{}f $K_2\subseteq \ii~\!K_1$. In this paper, we give several connections among the well-filteredness of $X$, the sobriety of $X$, the local compactness of $X$, the core compactness of $X$, the property Q of $X$, the coincidence of the upper Vietoris topology and Scott topology on $\mk (X)$, and the continuity of $x\mapsto\ua x : X \longrightarrow \Sigma~\!\! \mk (X)$ (where $\Sigma~\!\! \mk (X)$ is the Scott space of $\mk (X)$). It is shown that for a well-filtered space $X$ for which its Smyth power space $P_S(X)$ is first-countable, the following three properties are equivalent: the local compactness of $X$, the core compactness of $X$ and the continuity of $\mk (X)$. It is also proved that for a first-countable $T_0$ space $X$ in which the set of minimal elements of $K$ is countable for any compact saturated subset $K$ of $X$, the Smyth power space $P_S(X)$ is first-countable. For the Alexandroff double circle $Y$, which is Hausdorff and first-countable, we show that its Smyth power space $P_S(Y)$ is not first-countable.

\end{abstract}

\begin{keyword}
Well-filtered space; Local compactness; Smyth power space; Scott topology; First-countability

\MSC 06F30; 54B20; 54D10; 54D45

\end{keyword}

%%Graphical abstract
%\begin{graphicalabstract}
%\includegraphics{grabs}
%\end{graphicalabstract}

%%Research highlights
%\begin{highlights}
%\item Research highlight 1
%\item Research highlight 2
%\end{highlights}

%% MSC codes here, in the form:
%% or \MSC[2008] code \sep code (2000 is the default)

\end{frontmatter}

%% \linenumbers

%% main text
\section{Introduction}

In non-Hausdorff topology and domain theory, we encounter numerous links between topology and order theory. There are a lot of connections among well-filteredness, sobriety, local compactness and core compactness. The Hofmann-Mislove Theorem, the spectral theory of distributive continuous lattices and the duality theorem of continuous semilattices show some of the most important such connections (see \cite{redbook, Jean-2013}). For a $T_0$ space $X$, let $\mk (X)$ be the poset of all compact saturated sets of $X$ with the reverse inclusion order. The space $X$ is said to have \emph{property Q} if for any $K_1, K_2\in \mk (X)$, $K_2\ll K_1$ in $\mk (X)$ if{}f $K_2\subseteq \ii~\!K_1$ (cf. \cite[Proposition I-1.24.2 and Proposition IV-2.19]{redbook}). It is well-known that the local compactness of a $T_0$ space $X$ implies that its topology $\mathcal O(X)$ is a continuous lattice. The spectral theory of continuous lattices shows that a sober space $X$ for which $\mathcal O(X)$ is a continuous domain is locally compact, and if a well-filtered space $X$ is locally compact, then $\mk (X)$ is a continuous semilattice, but the converse fails in general. The  duality theorem of continuous semilattices shows that for a sober space with property Q, $\mk (X)$ is a continuous semilattice iff $X$ is locally compact (see \cite{Hofmann-Mislove, Hofmann-Lawson, redbook, Jean-2013}). Thus the Lawson dual of $\mk (X)$ may be properly "bigger" than $\mathcal O(X)$.

The Smyth power spaces are very important structures in domain theory, which play a fundamental role in modeling the semantics of non-deterministic programming languages. There naturally arises a question of which topological properties are preserved by the Smyth power spaces. It was proved by Schalk \cite{Schalk} that the Smyth power space $P_S(X)$ of a sober space $X$ is sober (see also \cite[Theorem 3.13]{Klause-Heckmann}), and the upper Vietoris topology (that is, the topology of Smyth power space) agrees with the Scott topology on $\mk (X)$ if $X$ is a locally compact sober space. Xi and Zhao \cite{xi-zhao-MSCS-well-filtered} showed that a $T_0$ space $X$ is well-filtered iff $P_S(X)$ is a $d$-space. Recently, Brecht and Kawai \cite{Brecht} pointed out that $P_S(X)$ is second-countable for a second-countable $T_0$ space $X$, and the first author and Zhao \cite{xuxizhao} proved that a $T_0$ space $X$ is well-filtered iff $P_S(X)$ is well-filtered.

In this paper, we investigate some further connections among well-filteredness, sobriety, local compactness, core compactness, property Q, and coincidence of the upper Vietoris topology and the Scott topology. Especially, for a $T_0$ space $X$, we discuss the following questions:
\begin{enumerate}[\rm (a)]
\item Under what conditions does the core compactness of a $T_0$ space $X$ imply the local compactness of $X$? Does the continuity of $\mk (X)$ of $X$ equate to the local compactness of $X$?

\item When does the upper Vietoris topology and the Scott topology on $\mk (X)$ coincide?

\item Is the Smyth power space $P_S(X)$ of a first-countable $T_0$ space again first-countable?
\end{enumerate}

For a $T_0$ space $X$, we give several connections among the well-filteredness of $X$, the sobriety of $X$, the local compactness of $X$, the core compactness of $X$, the property Q of $X$, the coincidence of the upper Vietoris topology and Scott topology on $\mk (X)$, and the continuity of $x\mapsto\ua x : X \longrightarrow \Sigma~\!\! \mk (X)$ (where $\Sigma~\!\!\mk (X)$ is the Scott space of $\mk (X)$). It is shown that for a well-filtered space $X$ for which $P_S(X)$ is first-countable, the following three properties are equivalent: the local compactness of $X$, the core compactness of $X$ and the continuity of $\mk (X)$. Some known results are improved. It is proved that for a first-countable $T_0$ space $X$ in which the set of minimal elements of $K$ is countable for any compact saturated subset $K$ of $X$, the Smyth power space $P_S(X)$ is first-countable. For the Alexandroff double circle $Y$, which is Hausdorff and first-countable, we show that its Smyth power space $P_S(Y)$ is not first-countable.

\section{Preliminary}

In this section, we briefly recall some fundamental concepts and notations that will be used in the paper. Some basic properties of sober spaces, metric spaces and compact saturated sets are presented. For further details, we refer the reader to \cite{redbook, Jean-2013, Schalk}.

For a poset $P$ and $A\subseteq P$, let
$\mathord{\downarrow}A=\{x\in P: x\leq  a \mbox{ for some }
a\in A\}$ and $\mathord{\uparrow}A=\{x\in P: x\geq  a \mbox{
	for some } a\in A\}$. For  $x\in P$, we write
$\mathord{\downarrow}x$ for $\mathord{\downarrow}\{x\}$ and
$\mathord{\uparrow}x$ for $\mathord{\uparrow}\{x\}$. A subset $A$
is called a \emph{lower set} (resp., an \emph{upper set}) if
$A=\mathord{\downarrow}A$ (resp., $A=\mathord{\uparrow}A$). Define $A^{\uparrow}=\{x\in P : x \mbox{ is an upper bound of } A \mbox{ in }P\}$. Dually, define $A^{\downarrow}=\{x\in P : x \mbox{ is a lower bound of } $A$ \mbox{ in } $P$\}$. The set $A^{\delta}=(A^{\ua})^{\da}$ is called the \emph{cut} generated by $A$. Let $P^{(<\omega)}=\{F\subseteq P : F \mbox{~is a nonempty finite set}\}$, $P^{(\leqslant\omega)}=\{F\subseteq P : F \mbox{~is a nonempty countable set}\}$ and $\mathbf{Fin} ~P=\{\uparrow F : F\in P^{(<\omega)}\}$.
 For a nonempty subset $A$ of $P$, define $\mathrm{min} (A)=\{a\in A : a$ is a minimal element of  $A\}$.  For a set $X$ and $A, B\subseteq X$, $A\subset B$ means that $A\subseteq B$ but $A\neq B$, that is, $A$ is a proper subset of $B$.

A nonempty subset $D$ of a poset $P$ is \emph{directed} if every two
elements in $D$ have an upper bound in $D$. The set of all directed sets of $P$ is denoted by $\mathcal D(P)$. $I\subseteq P$ is called an \emph{ideal} of $P$ if $I$ is a directed lower subset of $P$. Let $\mathrm{Id} (P)$ be the poset (with the order of set inclusion) of all ideals of $P$. Dually, we define the concept of \emph{filters} and denote the poset of all filters of $P$ by $\mathrm{Filt}(P)$. A filter of $P$ is called \emph{principal} if it has a minimum element, that is, there is $x\in P$ with $F=\ua x$. $P$ is called a
\emph{directed complete poset}, or \emph{dcpo} for short, provided that $\bigvee D$ exists in $P$ for any
$D\in \mathcal D(P)$. $P$ is called \emph{bounded complete} if $P$ is a dcpo and $\bigwedge A$ exists in $P$ for any nonempty subset $A$ of $P$.

As in \cite{redbook}, the \emph{lower topology} on a poset $Q$, generated
by the complements of the principal filters of $Q$, is denoted by $\omega (Q)$. A subset $U$ of $Q$ is \emph{Scott open} if
(i) $U=\mathord{\uparrow}U$ and (ii) for any directed subset $D$ for
which $\bigvee D$ exists, $\bigvee D\in U$ implies $D\bigcap
U\neq\emptyset$. All Scott open subsets of $Q$ form a topology.
This topology is called the \emph{Scott topology} on $Q$ and
denoted by $\sigma(Q)$. The space $\Sigma~\!\! Q=(Q,\sigma(Q))$ is called the
\emph{Scott space} of $Q$. The topology generated by $\omega (Q)\bigcup \sigma (Q)$ is called the \emph{Lawson topology} on $Q$ and denoted by $\lambda (Q)$.

For a $T_0$ space $X$, we use $\leq_X$ to represent the \emph{specialization order} of $X$, that is, $x\leq_X y$ if{}f $x\in \overline{\{y\}}$). In the following, when a $T_0$ space $X$ is considered as a poset, the order always refers to the specialization order if no other explanation. Let $\mathcal O(X)$ (resp., $\Gamma (X)$) be the set of all open subsets (resp., closed subsets) of $X$. A space $X$ is \emph{locally hypercompact} (see \cite{E_20182, Heckmann}) if for each $x\in X$ and each open neighborhood $U$ of $x$, there is $ F\in X^{(<\omega)}$ such that $x\in\ii~\!\,\ua F\subseteq\ua F\subseteq U$. Let $|X|$ be the cardinality of $X$ and $\omega=|\mathbb{N}|$, where $\mathbb{N}$ is the set of all natural numbers

A $T_0$ space $X$ is called a $d$-\emph{space} (or \emph{monotone convergence space}) if $X$ (with the specialization order) is a dcpo
 and $\mathcal O(X) \subseteq \sigma(X)$ (cf. \cite{redbook, Wyler}). Obviously, for a dcpo $P$, $\Sigma~\!\! P$ is a $d$-space. A nonempty subset $A$ of a $X$ is \emph{irreducible} if for any $\{F_1, F_2\}\subseteq \Gamma (X)$, $A \subseteq F_1\bigcup F_2$ implies $A \subseteq F_1$ or $A \subseteq  F_2$.  Denote by $\ir(X)$ (resp., $\ir_c(X)$) the set of all irreducible (resp., irreducible closed) subsets of $X$. Clearly, every subset of $X$ that is directed under $\leq_X$ is irreducible. A space $X$ is called \emph{sober}, if for any  $F\in\ir_c(X)$, there is a unique point $a\in X$ such that $F=\overline{\{a\}}$. Clearly, sober spaces are $T_0$.

For a dcpo $P$ and $A, B\subseteq P$, we say $A$ is \emph{way below} $B$, written $A\ll B$, if for each $D\in \mathcal D(P)$, $\bigvee D\in \ua B$ implies $D\bigcap \ua A\neq \emptyset$. For $B=\{x\}$, a singleton, $A\ll B$ is
written $A\ll x$ for short. For $x\in P$, let $w(x)=\{F\in P^{(<\omega)} : F\ll x\}$, $\Downarrow x = \{u\in P : u\ll x\}$ and $K(P)=\{k\in P : k\ll k\}$. Points in $K(P)$ are called \emph{compact} elements of $P$.

For the following definition and related conceptions, please refer to \cite{redbook, Jean-2013}.

\begin{definition}\label{continuous domain etc} Let $P$ be a dcpo and $X$ a $T_0$ space.
\begin{enumerate}[\rm (1)]
\item $P$ is called a \emph{continuous domain}, if for each $x\in P$, $\Downarrow x$ is directed
and $x=\bigvee\Downarrow x$.
\item  $P$ is called an \emph{algebraic domain}, if for each $x\in P$, $K(P)\bigcap \da x$ is directed
and $x=\bigvee K(P)\bigcap \da x$.
\item $P$ is called a \emph{quasicontinuous domain}, if for each $x\in P$, $\{\ua F : F\in w(x)\}$ is filtered and $\ua x=\bigcap
\{\ua F : F\in w(x)\}$.
\item $X$ is called \emph{core compact} if $\mathcal O(X)$ is a \emph{continuous lattice}.
\end{enumerate}
\end{definition}

\begin{lemma}\label{cont=CD}\emph{(\cite{redbook})} For a dcpo $P$, $P$ is continuous if{}f for each $x\in U\in \sigma (P)$, there is $u\in U$ such that $x\in \ii~\!_{\sigma (P)}\ua u\subseteq \ua u\subseteq U$.
\end{lemma}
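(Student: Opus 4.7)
The plan is to prove the two directions separately, using the auxiliary set
$B(x)=\{u\in P : x\in \ii~\!_{\sigma (P)}\ua u\}$ as a bridge between the Scott-topological condition and the order-theoretic definition of continuity.

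For the forward direction, I would assume $P$ is continuous and fix $x\in U\in\sigma(P)$. Since $\Downarrow x$ is directed with $\bigvee \Downarrow x=x$ and $U$ is Scott open, some $u\in\Downarrow x$ already lies in $U$. The standard interpolation property for continuous domains (equivalently, the fact that $\{y:u\ll y\}$ is Scott open) then gives $x\in\{y:u\ll y\}\subseteq \ua u$, so $x\in \ii~\!_{\sigma (P)}\ua u$; and $\ua u\subseteq U$ because $U$ is an upper set containing $u$.

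For the converse, fix $x\in P$. First I would check $B(x)\subseteq \Downarrow x$: given $u\in B(x)$ and a directed $D$ with $\bigvee D\geq x$, Scott openness of $\ii~\!_{\sigma (P)}\ua u$ (an upper set containing $x$ and hence $\bigvee D$) produces an element of $D$ in $\ua u$. Next, $B(x)$ is directed: given $u_1,u_2\in B(x)$, apply the hypothesis to the Scott open neighborhood $\ii~\!_{\sigma (P)}\ua u_1\cap \ii~\!_{\sigma (P)}\ua u_2$ of $x$ to extract a common upper bound $u_3\in B(x)$. To see $x=\bigvee B(x)$, the element $x$ is plainly an upper bound of $B(x)\subseteq\Downarrow x$; for any other upper bound $d$, if $x\not\leq d$ then $x$ lies in the Scott open set $P\setminus\da d$, so by hypothesis some $u\in B(x)$ sits in $P\setminus \da d$, contradicting $u\leq d$. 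From $B(x)\subseteq\Downarrow x$, $\bigvee B(x)=x$, and directedness of $B(x)$ one deduces both $\bigvee\Downarrow x=x$ and, since any $v\in\Downarrow x$ is dominated by some element of $B(x)$, directedness of $\Downarrow x$ itself.

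The main subtlety is the verification of $x=\bigvee B(x)$, where the trick of testing potential upper bounds against the Scott open complement of a principal lower set is essential. Everything else follows directly from unpacking the definitions of Scott openness and of the way-below relation, together with a single invocation of the hypothesis at each step.
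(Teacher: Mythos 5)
The paper gives no proof of this lemma---it is quoted from \cite{redbook}---so there is nothing internal to compare against; judged on its own, your argument is correct and is essentially the standard proof of this characterization. Both directions check out: the forward direction correctly reduces to the Scott-openness of $\{y : u\ll y\}$ (i.e.\ interpolation), and in the converse the set $B(x)=\{u : x\in \ii~\!_{\sigma(P)}\ua u\}$ is indeed contained in $\Downarrow x$, directed, and has supremum $x$, with the passage to directedness of $\Downarrow x$ itself justified because every $v\ll x=\bigvee B(x)$ is dominated by some member of the directed set $B(x)$. The only point worth making explicit is that $B(x)\neq\emptyset$, which follows by applying the hypothesis to the Scott open set $U=P$; with that remark added the proof is complete.
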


\begin{lemma}\label{quasic=LHC}\emph{(\cite{quasicont, Heckmann})} Let $P$ be a dcpo $P$. Then
\begin{enumerate}[\rm (1)]
\item $P$ is quasicontinuous if{}f $\Sigma ~\!\! P$ is locally hypercompact.
\item If $P$ is a quasicontinuous domain, then $\Sigma ~\!\! P$ is sober.
\end{enumerate}
\end{lemma}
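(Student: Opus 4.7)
The plan is to handle part (1) as an equivalence of local neighborhood-base conditions by finite principal upper sets, and to handle part (2) by showing every irreducible closed subset of $\Sigma~\!\! P$ has a maximum.

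For the forward direction of (1), I would fix $x \in U \in \sigma(P)$ and aim to produce $F \in P^{(<\omega)}$ with $x \in \ii_{\sigma(P)}\ua F \subseteq \ua F \subseteq U$. Quasicontinuity gives $\ua x = \bigcap \{\ua F : F \in w(x)\} \subseteq U$ with the family filtered and each $\ua F$ compact saturated in $\Sigma~\!\! P$ as a finite union of principal filters. The crucial subordinate lemma is the interpolation property for quasicontinuous domains: $F \ll y$ entails the existence of $G \in P^{(<\omega)}$ with $F \ll G \ll y$. This makes $\Uparrow F := \{y \in P : F \ll y\}$ Scott open, so it suffices to extract one $F \in w(x)$ with $\ua F \subseteq U$ via a filtered-compact argument, and then $x \in \Uparrow F \subseteq \ii_{\sigma(P)}\ua F$ finishes this direction. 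Conversely, assuming $\Sigma~\!\! P$ is locally hypercompact, for any Scott-open $U$ containing $x$ I would take $F_U \in P^{(<\omega)}$ with $x \in \ii_{\sigma(P)}\ua F_U \subseteq \ua F_U \subseteq U$; since $\ii_{\sigma(P)}\ua F_U$ is an upper set containing $x$, for any directed $D$ with $\bigvee D \geq x$ the sup lies in this Scott-open interior, forcing $D$ to meet it and hence yielding $F_U \ll x$. Filteredness of $\{\ua F : F \in w(x)\}$ follows by applying local hypercompactness at $x$ inside $\ii_{\sigma(P)}\ua F \cap \ii_{\sigma(P)}\ua G$, and the identity $\ua x = \bigcap \{\ua F : F \in w(x)\}$ by separating any $y \notin \ua x$ from $x$ with a Scott-open neighborhood of $x$ missing $y$.

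For part (2), I would take $A \in \ir_c(\Sigma~\!\! P)$. Since $A$ is Scott closed, it is closed under directed sups, so it suffices to prove $A$ is directed: then $\bigvee A \in A$ and $A = \da \bigvee A$. I would argue directedness by contradiction. Given $a, b \in A$ with no common upper bound in $A$, apply quasicontinuity to write $\ua a = \bigcap \{\ua F : F \in w(a)\}$ and $\ua b = \bigcap \{\ua G : G \in w(b)\}$, and use a Rudin-type combinatorial argument on these filtered families restricted to $A$ to produce two proper Scott-closed subsets of $A$ whose union is $A$, contradicting irreducibility.

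The main obstacle is twofold: first, establishing the interpolation property underlying the forward direction of (1), which is the technical heart of quasicontinuity and requires careful use of the filtered family $\{\ua F : F \in w(x)\}$ together with the compact saturated character of each $\ua F$; and second, the Rudin-style decomposition in (2), where one must verify that both of the two produced closed subsets are genuinely proper in $A$ rather than one engulfing all of it. Both steps rely essentially on the quasicontinuous hypothesis beyond mere dcpo-hood.
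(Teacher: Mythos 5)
The paper does not prove this lemma at all --- it is quoted from the cited sources --- so your sketch has to stand on its own, and it has two genuine gaps, both located exactly where you flag ``the main obstacle.'' In the converse half of (1), your filteredness argument for the \emph{full} family $\{\ua F : F\in w(x)\}$ does not go through as written: you apply local hypercompactness ``inside $\ii_{\sigma(P)}\ua F\cap \ii_{\sigma(P)}\ua G$,'' but for arbitrary $F,G\in w(x)$ you only know $F\ll x$, not $x\in\ii_{\sigma(P)}\ua F$; that implication becomes available only \emph{after} quasicontinuity (hence interpolation and openness of $\Uparrow F=\{y: F\ll y\}$) is established, which is what you are proving. Your argument really shows that the subfamily $w'(x)=\{F : x\in\ii_{\sigma(P)}\ua F\}$ is filtered with intersection $\ua x$. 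To pass to all of $w(x)$ you still need the cofinality claim that every $F\ll x$ admits $G\in w'(x)$ with $\ua G\subseteq\ua F$; this is obtained from Rudin's lemma applied to $\{\ua(G\setminus\ua F) : G\in w'(x)\}$ (filtered because $\ua F$ is an upper set), which otherwise yields a directed $D$ disjoint from $\ua F$ with $\bigvee D\geq x$, contradicting $F\ll x$. A similar remark applies to the forward direction: extracting $F\in w(x)$ with $\ua F\subseteq U$ is not a ``filtered--compact'' argument --- compactness of the members buys nothing without well-filteredness, which is not yet available --- it is again Rudin's lemma for filtered families of finitely generated upper sets.

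In (2), the proposed decomposition is the weak point. From $a,b\in A$ with $\ua a\cap\ua b\cap A=\emptyset$ you would need two open sets, each meeting $A$, with disjoint traces on $A$; the natural candidates $\Uparrow F,\Uparrow G$ require passing from $\bigcap_{F,G}(\ua F\cap\ua G)\cap A=\emptyset$ to $\ua F\cap\ua G\cap A=\emptyset$ for a single pair, and the sets $\ua F\cap\ua G$ are intersections, not finitely generated upper sets, so Rudin's lemma does not apply to that family (and invoking well-filteredness of $\Sigma~\!\! P$ here would be circular). The standard route avoids this entirely: set $\mathcal M=\{F\in P^{(<\omega)} : \Uparrow F\cap A\neq\emptyset\}$, use irreducibility in its open-set form together with filteredness of $w$ at a common point of $\Uparrow F_1\cap\Uparrow F_2\cap A$ to show $\{\ua F : F\in\mathcal M\}$ is filtered, observe that $F\cap A\neq\emptyset$ for $F\in\mathcal M$ because $A$ is a lower set, and apply Rudin's lemma to $\{\ua(F\cap A) : F\in\mathcal M\}$ to produce a directed $D\subseteq A$ whose supremum is the maximum of $A$, so that $A=\overline{\{\bigvee D\}}$. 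I recommend replacing the ``directedness by contradiction'' plan with this construction.
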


A subset $B$ of a $T_0$ space $X$ is called \emph{saturated} if $B$ equals the intersection of all open sets containing it (equivalently, $B$ is an upper set in the specialization order). We shall use $\mk(X)$ to
denote the set of all nonempty compact saturated subsets of $X$ and endow it with the \emph{Smyth preorder}, that is, for $K_1,K_2\in \mathord{\mathsf{K}}(X)$, $K_1\sqsubseteq K_2$ if{}f $K_2\subseteq K_1$. Let $\mathcal S^u(X)=\{\uparrow x : x\in X\}$.

\begin{lemma}\label{sups in Smyth}\emph{(\cite{redbook})}  Let $X$ be a $T_0$ space. For a nonempty family $\{K_i : i\in I\}\subseteq \mk (X)$, $\bigvee_{i\in I} K_i$ exists in $\mk (X)$ if{}f~$\bigcap_{i\in I} K_i\in \mk (X)$. In this case $\bigvee_{i\in I} K_i=\bigcap_{i\in I} K_i$.
\end{lemma}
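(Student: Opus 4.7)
The plan is to unpack the definition of the Smyth order $K_1\sqsubseteq K_2\iff K_2\subseteq K_1$ and translate suprema in $\mk(X)$ into intersections, using the elementary but crucial fact that $\ua x\in\mk(X)$ for every $x\in X$.

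First I would dispose of the easy direction. Suppose $\bigcap_{i\in I}K_i\in\mk(X)$. Since $\bigcap_{i\in I}K_i\subseteq K_j$ for every $j\in I$, we have $K_j\sqsubseteq \bigcap_{i\in I}K_i$ in the Smyth preorder, so it is an upper bound of the family. If $L\in\mk(X)$ is any upper bound, then $L\subseteq K_i$ for every $i$, hence $L\subseteq\bigcap_{i\in I}K_i$, i.e. $\bigcap_{i\in I}K_i\sqsubseteq L$. Thus $\bigcap_{i\in I}K_i$ is the least upper bound, which also records the identity $\bigvee_{i\in I}K_i=\bigcap_{i\in I}K_i$.

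For the converse, assume $K=\bigvee_{i\in I}K_i$ exists in $\mk(X)$. Being an upper bound means $K\subseteq K_i$ for every $i$, so $K\subseteq\bigcap_{i\in I}K_i$, and it remains to show the reverse inclusion (which will simultaneously show $\bigcap_{i\in I}K_i\in\mk(X)$ since it equals $K$). Here comes the key step: for any $x\in X$, $\ua x=\bigcap\{U\in\mathcal{O}(X):x\in U\}$ is saturated, and it is compact because any open cover of $\ua x$ contains an open neighbourhood $U$ of $x$, and then $\ua x\subseteq U$. So $\ua x\in\mk(X)$.

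Now pick any $x\in\bigcap_{i\in I}K_i$. Since each $K_i$ is an upper set in the specialization order, $x\in K_i$ implies $\ua x\subseteq K_i$, so $K_i\sqsubseteq \ua x$ for every $i$. Thus $\ua x$ is an upper bound of $\{K_i:i\in I\}$ in $\mk(X)$, and by the least-upper-bound property of $K$ we get $K\sqsubseteq \ua x$, i.e. $\ua x\subseteq K$, whence $x\in K$. This gives $\bigcap_{i\in I}K_i\subseteq K$, completing the proof. I do not anticipate a real obstacle — the whole argument hinges on noticing that principal upper sets $\ua x$ are compact saturated, which lets one test membership in the supremum pointwise.
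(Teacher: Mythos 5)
Your proof is correct and is essentially the standard argument for this fact (the paper itself gives no proof, citing \cite{redbook}): reverse inclusion makes $\bigcap_{i\in I}K_i$ the candidate supremum, and the observation that $\ua x=\mathrm{sat}(\{x\})\in\mk(X)$ lets you test membership in $\bigvee_{i\in I}K_i$ pointwise to force it to equal the intersection. Both directions, including the implicit nonemptiness of $\bigcap_{i\in I}K_i$ when the supremum exists, are handled correctly.
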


A topological space $X$ is called \emph{well-filtered} if $X$ is $T_0$, and for any open set $U$ and any $\mathcal{K}\in \mathcal D(\mathord{\mathsf{K}}(X))$, $\bigcap\mathcal{K}{\subseteq} U$ implies $K{\subseteq} U$ for some $K{\in}\mathcal{K}$.

We have the following implications (which can not be reversed):
\begin{center}
sobriety $\Rightarrow$ well-filteredness $\Rightarrow$ $d$-space.
\end{center}

For a $T_0$ space $X$, let $\mathrm{OFilt(\mathcal O(X))}=\sigma (\mathcal O(X))\bigcap \mathrm{Filt}(\mathcal O(X))$. $\mathcal{U}\subseteq \mathcal O(X)$ is called an \emph{open filter} if $\mathcal U\in \mathrm{OFilt(\mathcal O(X))}$. For $K\in \mk (X)$, let $\Phi (K)=\{U\in \mathcal O(X) : K\subseteq U\}$. Then $\Phi (K)\in \mathrm{OFilt(\mathcal O(X))}$ and $K=\bigcap \Phi (K)$. Obviously, $\Phi : \mk (X) \longrightarrow \mathrm{OFilt(\mathcal O(X))}, K\mapsto \Phi (K)$, is an order embedding.

The single most important result about sober spaces is the Hofmann-Mislove Theorem (see \cite{Hofmann-Mislove} or \cite[Theorem II-1.20 and Theorem II-1.21]{redbook}).

\begin{theorem}\label{Hofmann-Mislove theorem} \emph{(The Hofmann-Mislove Theorem)} For a $T_0$ space $X$, the following conditions are equivalent:
\begin{enumerate}[\rm (1)]
            \item $X$ is a sober space.
            \item  For any $\mathcal F\in \mathrm{OFilt}(\mathcal O(X))$, there is $K\in \mk (X)$ such that $\mathcal F=\Phi (K)$.
            \item  For any $\mathcal F\in \mathrm{OFilt}(\mathcal O(X))$, $\mathcal F=\Phi (\bigcap \mathcal F)$.
\end{enumerate}
\end{theorem}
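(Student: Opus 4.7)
My plan is to run the cycle (1) $\Rightarrow$ (3) $\Rightarrow$ (2) $\Rightarrow$ (3) $\Rightarrow$ (1), observing first that (2) $\Leftrightarrow$ (3) is essentially formal. From (2), if $\mathcal F = \Phi(K)$ then saturation of $K$ gives $K = \bigcap \Phi(K) = \bigcap \mathcal F$, whence $\Phi(\bigcap \mathcal F) = \mathcal F$; conversely (3) supplies (2) with the witness $K = \bigcap \mathcal F$. So the real content sits in (1) $\Rightarrow$ (3) and (3) $\Rightarrow$ (1).

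For (1) $\Rightarrow$ (3), fix $\mathcal F \in \mathrm{OFilt}(\mathcal O(X))$ and set $K = \bigcap \mathcal F$. Then $K$ is an intersection of upper sets, hence saturated, and $\mathcal F \subseteq \Phi(K)$ is trivial. The crux is the reverse inclusion $\Phi(K) \subseteq \mathcal F$. Assuming for contradiction some $U \in \Phi(K) \setminus \mathcal F$, the poset $\mathcal P = \{V \in \mathcal O(X) : U \subseteq V,\ V \notin \mathcal F\}$ is closed under directed unions (because $\mathcal F$ is Scott open), so Zorn's lemma supplies a maximal $V^* \in \mathcal P$. I would then show $X \setminus V^*$ is irreducible closed: any closed decomposition $X \setminus V^* = A \cup B$ with $A,B$ properly smaller would give $X \setminus A, X \setminus B \supsetneq V^*$, forcing (by maximality) both into $\mathcal F$, and the filter property then puts $V^* = (X \setminus A) \cap (X \setminus B)$ back into $\mathcal F$, a contradiction. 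Sobriety yields $a$ with $X \setminus V^* = \overline{\{a\}}$, so $a \notin V^* \supseteq U$. To place $a$ in $K$, take any $W \in \mathcal F$: the case $W \subseteq V^*$ is ruled out because it would force $V^* \in \mathcal F$ by upward closure, so some $x \in W \setminus V^* \subseteq \overline{\{a\}}$ satisfies $x \leq_X a$, and since $W$ is open we conclude $a \in W$. Thus $a \in K \subseteq U$, contradicting $a \notin U$. Once $\Phi(K) = \mathcal F$ is established, compactness of $K$ is immediate: an open cover $K \subseteq \bigcup_j U_j$ has total union in $\mathcal F$, and Scott-openness applied to the directed family of finite subunions produces one already in $\mathcal F$, hence containing $K$.

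For (3) $\Rightarrow$ (1), given $F \in \ir_c(X)$, set $\mathcal F = \{U \in \mathcal O(X) : U \cap F \neq \emptyset\}$. Irreducibility of $F$ translates precisely into closure of $\mathcal F$ under binary intersection (otherwise $F$ would split across the two complements), and Scott-openness is automatic since a witness of nonempty intersection with a directed union lies in one of the members; hence $\mathcal F \in \mathrm{OFilt}(\mathcal O(X))$. By (3), $K := \bigcap \mathcal F \in \mk(X)$ with $\Phi(K) = \mathcal F$. Unwinding: $x \in K$ iff every open meeting $F$ contains $x$, iff $F \cap (X \setminus \overline{\{x\}}) = \emptyset$, iff $F \subseteq \overline{\{x\}}$. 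Since $X \setminus F \notin \mathcal F = \Phi(K)$, we have $K \not\subseteq X \setminus F$, so pick $a \in K \cap F$: then $F \subseteq \overline{\{a\}} \subseteq F$ (the second inclusion because $a \in F$ and $F$ is closed), giving $F = \overline{\{a\}}$. Uniqueness of $a$ follows from $T_0$.

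The main obstacle is the Zorn's lemma step in (1) $\Rightarrow$ (3): both the irreducibility of $X \setminus V^*$ and the insertion of the generic point $a$ into $K$ depend on a careful interplay between the filter property and the Scott-open property of $\mathcal F$. Everything else reduces to bookkeeping on $\Phi$, saturation, and the specialization order.
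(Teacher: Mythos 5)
Your proof is correct. Note that the paper itself gives no proof of this theorem --- it is quoted verbatim from the literature (\cite{Hofmann-Mislove} and \cite[Theorems II-1.20, II-1.21]{redbook}) --- so there is no in-paper argument to compare against; what you have written is essentially the classical proof from those sources. The Zorn's-lemma construction of an open $V^*$ maximal among the opens containing a fixed $U\in\Phi(K)\setminus\mathcal F$ and lying outside $\mathcal F$ (using Scott-openness of $\mathcal F$ for closure under directed unions), the verification that $X\setminus V^*$ is irreducible via the filter property, and the placement of the generic point $a$ into $\bigcap\mathcal F$ to contradict $a\notin U$ are exactly the standard steps; so is the converse via the open filter $\{U:U\cap F\neq\emptyset\}$ attached to an irreducible closed set, and the compactness argument from Scott-openness applied to finite subunions. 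Two small points are worth making explicit. First, before invoking irreducibility and sobriety you should record that $V^*\neq X$: since $\mathcal F$ is a nonempty upper set of $\mathcal O(X)$ we have $X\in\mathcal F$, so $X\notin\mathcal P$ and $X\setminus V^*$ is indeed a nonempty closed set. Second, the improper filter $\mathcal F=\mathcal O(X)$ is Scott-open and is a filter in the paper's sense, yet $\bigcap\mathcal F=\emptyset\notin\mk (X)$, so your witness $K=\bigcap\mathcal F$ for (3) $\Rightarrow$ (2) is not literally an element of $\mk (X)$ in that degenerate case; this is a defect of the standard formulation (which the paper inherits from its sources, and which is usually repaired by restricting to proper filters or by admitting $\emptyset$ among the compact saturated sets) rather than of your reasoning, but a careful write-up should flag it.
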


By the Hofmann-Mislove Theorem, a $T_0$ space $X$ is sober iff $\Phi : \mk (X) \longrightarrow \mathrm{OFilt(\mathcal O(X))}$ is an order isomorphism.

\begin{theorem}\label{SoberLC=CoreC}\emph{(\cite{redbook, Jean-2013, Kou})}  For a $T_0$ space $X$, the following conditions are equivalent:
\begin{enumerate}[\rm (1)]
	\item $X$ is locally compact and sober.
	\item $X$ is locally compact and well-filtered.
	\item $X$ is core compact and sober.
\end{enumerate}
\end{theorem}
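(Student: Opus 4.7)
My plan is to close the implications via $(1) \Rightarrow (2) \Rightarrow (1)$ and $(1) \Rightarrow (3) \Rightarrow (1)$. The forward implications from $(1)$ are immediate: sobriety implies well-filteredness by the chain recorded earlier, while local compactness implies core compactness because, for every $U \in \mathcal O(X)$, the family of finite unions of interiors $\ii~K$ with $K \in \mk(X)$ and $K \subseteq U$ is a directed subfamily of $\mathcal O(X)$ exhausting $U$, and each such finite union is way below $U$ (compactness converts the directed cover into a single term). The real work is in the two return implications, both of which I would handle through the Hofmann-Mislove criterion in Theorem \ref{Hofmann-Mislove theorem}.

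For $(2) \Rightarrow (1)$ I would verify that every proper open filter $\mathcal U \subseteq \mathcal O(X)$ has the form $\Phi(K^*)$ for some $K^* \in \mk(X)$. Set $\mathcal K_\mathcal U = \{K \in \mk(X) : \ii~K \in \mathcal U\}$. Local compactness together with Scott-openness of $\mathcal U$ yields, for every $U \in \mathcal U$, an element $K \in \mathcal K_\mathcal U$ with $K \subseteq U$: pick $V \ll U$ lying in $\mathcal U$, realize $V \subseteq K \subseteq U$ with $K \in \mk(X)$, and observe $V \subseteq \ii~K$, whence $\ii~K \in \mathcal U$. Applying this inside $\ii~K_1 \cap \ii~K_2 \in \mathcal U$ shows that $\mathcal K_\mathcal U$ is filtered in the Smyth preorder, so well-filteredness forces $K^* := \bigcap \mathcal K_\mathcal U$ to be a nonempty compact saturated set. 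Both inclusions of $\Phi(K^*) = \mathcal U$ then drop out: one from the existence of $K \in \mathcal K_\mathcal U$ inside $U$, the other from well-filteredness pulling any $W \supseteq K^*$ down to some $K \in \mathcal K_\mathcal U$ with $K \subseteq W$, whence $\ii~K \in \mathcal U$ and $\ii~K \subseteq W$ force $W \in \mathcal U$.

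For $(3) \Rightarrow (1)$, given $x \in U \in \mathcal O(X)$, I would use continuity of $\mathcal O(X)$ to choose $V_0 \ll U$ with $x \in V_0$ and then iterate interpolation to produce opens $V_1, V_2, \ldots$ with $V_0 \ll V_{n+1} \ll V_n \ll \cdots \ll V_1 \ll U$ and $V_0 \ll V_n$ for every $n$. The upward closure $\mathcal F = \{W \in \mathcal O(X) : V_n \subseteq W \text{ for some } n\}$ is then a Scott-open filter: the filter property follows from the descending chain, and Scott-openness from the fact that $V_{n+1} \ll V_n \subseteq \bigsqcup_i W_i$ forces some $W_i \supseteq V_{n+1}$. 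Sobriety together with Hofmann-Mislove yields $K \in \mk(X)$ with $\mathcal F = \Phi(K)$; then $K \subseteq V_1 \subseteq U$ (as $V_1 \in \mathcal F$), and $V_0 \subseteq V_n$ for every $n$ gives $V_0 \subseteq \bigcap \mathcal F = K$, hence $V_0 \subseteq \ii~K$ since $V_0$ is open. The sandwich $x \in V_0 \subseteq \ii~K \subseteq K \subseteq U$ finishes the argument.

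The main delicacy in both directions is that the way-below relation on $\mathcal O(X)$ need not be multiplicative, so the naive set $\{W : V_0 \ll W\}$ is not manifestly closed under binary meets. This is sidestepped in $(2) \Rightarrow (1)$ by indexing the filtered family via interiors of compact saturated sets, where intersections behave well thanks to local compactness, and in $(3) \Rightarrow (1)$ by replacing that set with the upward closure of an interpolating descending chain, which is automatically a filter.
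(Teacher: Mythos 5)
The paper does not prove Theorem \ref{SoberLC=CoreC} at all; it is quoted from the literature with citations, so there is no internal proof to compare against. Your argument is correct and is essentially the classical Hofmann--Mislove/Hofmann--Lawson proof that those sources give. The cycle decomposition is sound: sober $\Rightarrow$ well-filtered is recorded in the paper, and your verification that local compactness gives core compactness (finite unions of interiors of compact subsets of $U$ form a directed family of elements way below $U$ with union $U$) is the standard one. The two substantive legs both correctly route through Theorem \ref{Hofmann-Mislove theorem}: for (2) $\Rightarrow$ (1) you exhibit every proper Scott-open filter as $\Phi(K^*)$ with $K^*=\bigcap\mathcal K_{\mathcal U}$, using Lemma \ref{WF V less S}(1) for compactness of the filtered intersection and the definition of well-filteredness for $\Phi(K^*)\subseteq\mathcal U$; for (3) $\Rightarrow$ (1) the interpolated chain $V_0\ll\cdots\ll V_{n+1}\ll V_n\ll\cdots\ll V_1\ll U$ produces a proper Scott-open filter whose compact intersection is sandwiched as $x\in V_0\subseteq\ii\,K\subseteq K\subseteq U$. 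Your closing remark about the failure of multiplicativity of $\ll$ on $\mathcal O(X)$ identifies exactly the pitfall that both constructions are designed to avoid, and both do avoid it. The only point worth flagging is a convention: under the paper's definition of filter, the improper filter $\mathcal O(X)$ is a Scott-open filter that is never $\Phi(K)$ for a nonempty $K$, so condition (2) of Theorem \ref{Hofmann-Mislove theorem} must be read (as you implicitly do) for proper filters, or via its condition (3); with that reading your appeal to the theorem in both directions is legitimate, since the filters you construct are proper ($\emptyset\notin\mathcal U$ and $x\in V_0\subseteq V_n$ for all $n$).
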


For $U\in \mathcal O(X)$, let $\Box U=\{K\in \mk (X) : K\subseteq  U\}$. The \emph{upper Vietoris topology} on $\mk (X)$ is the topology generated by $\{\Box U : U\in \mathcal O(X)\}$ as a base, and the resulting space is called the \emph{Smyth power space} or \emph{upper space} of $X$ and is denoted by $P_S(X)$ (cf. \cite{Heckmann, Schalk}).

\begin{remark}\label{xi embdding} (\cite{Heckmann, Klause-Heckmann, Schalk}) Let $X$ be a $T_0$ space. Then
\begin{enumerate}[\rm (1)]
\item The specialization order on $P_S(X)$ is the Smyth order (that is, $\leq_{P_S(X)}=\sqsubseteq$).
\item The \emph{canonical mapping} $\xi_X: X\longrightarrow P_S(X)$, $x\mapsto\ua x$, is an order and topological embedding.
\item $P_S(\mathcal S^u(X))$ is a subspace of $P_S(X)$ and $X$ is homeomorphic to $P_S(\mathcal S^u(X))$.
\end{enumerate}
\end{remark}

For a nonempty subset $C$ of a $T_0$ $X$, it is easy to see that $C$ is compact if{}f $\ua C\in \mk (X)$. Furthermore, we have the following useful result (see, e.g., \cite[pp.2068]{E_2009}).

\begin{lemma}\label{COMPminimalset} Let $X$ be a $T_0$ space and $C\in \mk (X)$. Then $C=\ua \mathrm{min}(C)$ and  $\mathrm{min}(C)$ is compact.
\end{lemma}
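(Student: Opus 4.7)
The plan is a standard two-part argument using Zorn's lemma (via compactness) for the first equality and a routine open-cover argument for the second.

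\textbf{Step 1: Prove $C = \ua \mathrm{min}(C)$.} The inclusion $\ua \mathrm{min}(C) \subseteq C$ is immediate because $\mathrm{min}(C) \subseteq C$ and $C$ is saturated, hence an upper set in $\leq_X$. For the reverse inclusion, I fix $c \in C$ and produce an $m \in \mathrm{min}(C)$ with $m \leq_X c$. Consider the poset $S_c = C \cap \da c$ ordered by $\leq_X$, and apply Zorn's lemma dually: I need every chain in $S_c$ to have a lower bound in $S_c$. Given such a chain $D$, I would form the family
\[
\mathcal{F} = \{\overline{\{d\}} \cap C : d \in D\},
\]
a family of nonempty closed subsets of the compact subspace $C$. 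Since $d_1 \leq_X d_2$ gives $\overline{\{d_1\}} \subseteq \overline{\{d_2\}}$, $\mathcal{F}$ is totally ordered by inclusion and thus has the finite intersection property. Compactness of $C$ then yields a point $y \in \bigcap \mathcal{F} \subseteq C$, and $y \in \overline{\{d\}}$ means $y \leq_X d$ for every $d \in D$. Together with $y \leq_X c$, this makes $y$ the required lower bound of $D$ in $S_c$. Zorn produces a minimal element $m$ of $S_c$, and any $m' \in C$ with $m' \leq_X m$ lies in $S_c$ (since $m \leq_X c$), so $m' = m$ by minimality. Hence $m \in \mathrm{min}(C)$ with $m \leq_X c$, proving $c \in \ua \mathrm{min}(C)$.

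\textbf{Step 2: Prove $\mathrm{min}(C)$ is compact.} Let $\{U_i : i \in I\} \subseteq \mathcal O(X)$ cover $\mathrm{min}(C)$. Each $U_i$ is an upper set with respect to $\leq_X$, so by the identity just proved, every $c \in C$ satisfies $c \geq_X m$ for some $m \in \mathrm{min}(C) \subseteq U_i$ for an appropriate $i$, giving $c \in U_i$. Thus $\{U_i : i \in I\}$ also covers $C$, and compactness of $C$ extracts a finite subcover, which in particular covers $\mathrm{min}(C)$.

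\textbf{Expected obstacle.} There is no real obstacle; the only delicate point is the Zorn step, and the trick is to replace \emph{order-theoretic} chains in $S_c$ with the \emph{topological} chain $\{\overline{\{d\}} \cap C\}$ of closed sets, so that the compactness of $C$ (in the subspace topology) immediately yields the required lower bound. The argument uses only $T_0$-ness (so that $\overline{\{d\}} = \da d$ in the specialization order) and compactness of $C$; saturation of $C$ is what makes open covers of $\mathrm{min}(C)$ automatically cover all of $C$ in Step 2.
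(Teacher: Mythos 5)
Your proof is correct; the paper itself gives no proof of this lemma, citing it to Ern\'e, and your argument (Zorn's lemma on $C\cap\mathord{\downarrow}c$ with the chain of closed sets $\overline{\{d\}}\cap C$ meeting by compactness, then lifting an open cover of $\mathrm{min}(C)$ to one of $C$ via saturation) is exactly the standard proof of that cited result. No gaps.
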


For a metric space $(X, d)$, $x\in X$ and a positive number $r$, let $B(x, \varepsilon)=\{y\in Y : d(x, y)<r\}$ be the $r$-\emph{ball} about $x$. For a set $A\subseteq X$ and a positive number $r$, by the $r$-\emph{ball} about $A$ we mean the set $B(A, r)=\bigcup_{a\in A}B(a, r)$.

The following two results are well-known (cf. \cite{Engelking}).

\begin{proposition}\label{metric space property}
Every metric space is perfectly normal and first-countable. Therefore, it is sober.
\end{proposition}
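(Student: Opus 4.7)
My plan is to handle the three claims in the order stated, using only standard distance-based tools from metric topology.

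For first-countability, at each $x\in X$ the family $\{B(x,1/n):n\in\mathbb{N}\}$ is a countable local base: any open neighborhood $U$ of $x$ contains some ball $B(x,r)$, and $B(x,1/n)\subseteq U$ as soon as $1/n<r$.

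For perfect normality I would split the claim in two. For normality, given disjoint nonempty closed sets $A,B$, the Urysohn-type function $f(x)=d(x,A)/(d(x,A)+d(x,B))$ is well-defined (since $A\cap B=\emptyset$ with both closed, the denominator is positive everywhere) and continuous, vanishes on $A$ and equals $1$ on $B$, and thus produces the disjoint open separators $f^{-1}([0,1/2))\supseteq A$ and $f^{-1}((1/2,1])\supseteq B$; combined with the $T_1$ axiom this yields $T_4$. For the $G_\delta$ condition, each closed $C$ equals $\bigcap_{n\in\mathbb{N}}B(C,1/n)$, since the continuous function $d(\,\cdot\,,C)$ vanishes precisely on $\overline{C}=C$.

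Sobriety follows from the Hausdorff axiom, which metric spaces satisfy via disjoint balls $B(x,d(x,y)/2)$ and $B(y,d(x,y)/2)$. If some $F\in\ir_c(X)$ contained two distinct points $x\neq y$, separating them by disjoint open sets $U\ni x$ and $V\ni y$ would give $F\subseteq (X\setminus U)\cup(X\setminus V)$ with $F\nsubseteq X\setminus U$ and $F\nsubseteq X\setminus V$, contradicting irreducibility. Hence every $F\in\ir_c(X)$ is the singleton closure $\{a\}=\overline{\{a\}}$ of a necessarily unique generic point (by $T_0$, a fortiori by $T_2$), so $X$ is sober.

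No genuine obstacle arises: all three arguments are classical (cf.\ Engelking). The only mildly misleading feature of the statement is the word ``Therefore'': sobriety does not really invoke first-countability or the $G_\delta$-half of perfect normality, but only the Hausdorff separation built into any metric, so I would be explicit that the implication goes through the $T_2$ axiom rather than through either of the two preceding assertions in full.
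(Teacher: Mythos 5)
Your proof is correct and complete; the paper offers no proof of this proposition at all, merely recording it as well known with a citation to Engelking, and the arguments you supply (the balls $B(x,1/n)$ for first-countability, the function $d(x,A)/(d(x,A)+d(x,B))$ for normality, the representation $C=\bigcap_{n\in\mathbb{N}}B(C,1/n)$ for the $G_\delta$ condition, and Hausdorffness collapsing irreducible closed sets to singletons) are precisely the standard ones. Your caveat about the word ``Therefore'' is fair but minor: the implication can legitimately be read as passing through perfect normality, since a perfectly normal (hence $T_1$ and normal) space is Hausdorff, which is all your sobriety argument actually uses.
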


\begin{proposition}\label{metric space compact sets}  Let $(X, d)$ be a metric space and $K$ a compact set of $X$. Then for any open set $U$ containing $K$, there is an $r>0$ such that $K\subseteq B(K, r)\subseteq U$.
\end{proposition}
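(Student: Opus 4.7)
The claim is the standard ``tube lemma'' for a compact subset of a metric space, and the plan is to prove it by the usual finite subcover argument, leveraging the fact that the metric gives uniform control on ball radii.

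First, for each point $x\in K$, since $U$ is open and $x\in U$, I would pick a radius $r_x>0$ small enough that the doubled ball $B(x,2r_x)$ is still contained in $U$ (doubling is the trick that will let a finite subcover yield a single uniform radius at the end). The collection $\{B(x,r_x):x\in K\}$ is an open cover of $K$, and by the compactness of $K$ there exists a finite subcover $B(x_1,r_{x_1}),\ldots,B(x_n,r_{x_n})$. I would then define
\[
r=\min\{r_{x_1},\ldots,r_{x_n}\}>0.
\]

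The key verification is that $B(K,r)\subseteq U$. Given any $y\in B(K,r)$, by definition there is $a\in K$ with $d(a,y)<r$. Since the $B(x_i,r_{x_i})$ cover $K$, we can choose $i$ with $a\in B(x_i,r_{x_i})$, so $d(x_i,a)<r_{x_i}$. The triangle inequality then gives
\[
d(x_i,y)\le d(x_i,a)+d(a,y)<r_{x_i}+r\le 2r_{x_i},
\]
so $y\in B(x_i,2r_{x_i})\subseteq U$, as required. Since $K\subseteq B(K,r)$ is immediate, the chain $K\subseteq B(K,r)\subseteq U$ follows.

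No real obstacle is expected; the only point that needs care is the factor of $2$ in the initial choice of $r_x$, which is essential so that the uniform minimum $r$ still leaves room for the triangle inequality estimate to land inside $U$. As an alternative, one could instead consider the continuous function $x\mapsto d(x,X\setminus U)$, which is strictly positive on $K$ and hence attains a positive minimum $r$ on the compact set $K$, and then note $B(K,r)\subseteq U$ by the definition of the distance to a closed set; but the finite subcover argument is more self-contained and avoids appealing to extreme-value arguments.
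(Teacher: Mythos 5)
Your proof is correct: the doubling trick $B(x,2r_x)\subseteq U$ followed by a finite subcover and taking the minimum radius is exactly the standard argument, and the triangle-inequality verification that $B(K,r)\subseteq U$ is carried out properly. The paper itself states this proposition as well known (citing Engelking) and gives no proof, so there is nothing further to compare; either of the two routes you mention is fine.
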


\section{Well-filtered spaces and locally compact spaces}

Firstly, we give the following two known results (see, e.g., \cite{redbook, Jean-2013, xi-zhao-MSCS-well-filtered, xu-shen-xi-zhao1}).

\begin{lemma}\label{WF V less S}  Let $X$ be a well-filtered space. Then
 \begin{enumerate}[\rm (1)]
 \item For any $\mathcal K\in \mathcal D(\mk (X))$, $\bigcap\mathcal K\in\mk (X)$ and $\bigvee_{\mk (X)}\mathcal K=\bigcap\mathcal K$.
 \item $P_S(X)$ is a $d$-space, and hence the upper Vietoris topology is coarser than the Scott topology on $\mk (X)$.
\end{enumerate}
\end{lemma}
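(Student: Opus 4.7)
The plan is to establish (1) directly from well-filteredness together with Lemma \ref{sups in Smyth}, and then to read off (2) as a packaging of the same fact with Remark \ref{xi embdding}(1).

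For (1), observe that $\mathcal K\in\mathcal D(\mk(X))$ means that $\mathcal K$ is directed in the Smyth order $\sqsubseteq$; since $K_1\sqsubseteq K_2$ is reverse inclusion, $\mathcal K$ is filtered under $\subseteq$. I would verify three properties of $L:=\bigcap\mathcal K$. First, $L\neq\emptyset$: otherwise $L\subseteq\emptyset\in\mathcal O(X)$, and well-filteredness would produce a $K\in\mathcal K$ with $K\subseteq\emptyset$, contradicting $K\in\mk(X)$. Second, $L$ is compact: given any open cover $\mathcal U$ of $L$, set $U:=\bigcup\mathcal U$; well-filteredness yields some $K\in\mathcal K$ with $K\subseteq U$, and compactness of $K$ produces a finite subcover of $K$, which a fortiori covers $L\subseteq K$. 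Third, $L$ is saturated as an intersection of upper sets. Lemma \ref{sups in Smyth} then gives $\bigvee_{\mk(X)}\mathcal K=L$.

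For (2), Remark \ref{xi embdding}(1) identifies the specialization order of $P_S(X)$ with $\sqsubseteq$, so part (1) already shows that $P_S(X)$ is a dcpo in its specialization order, with directed sups equal to intersections. It remains to check that $\mathcal O(P_S(X))\subseteq\sigma(P_S(X))$, for which it suffices to show that every basic open $\Box U$ is Scott open. The upper-set condition is immediate: if $K\in\Box U$ and $K\sqsubseteq K'$, then $K'\subseteq K\subseteq U$. Inaccessibility by directed sups is a restatement of well-filteredness: if $\bigvee_{\mk(X)}\mathcal K=\bigcap\mathcal K\subseteq U$, then some $K\in\mathcal K$ satisfies $K\subseteq U$, i.e., $K\in\Box U$.

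The only substantive step is verifying that $\bigcap\mathcal K$ lies in $\mk(X)$, and even there the entire burden is carried by the defining property of well-filteredness applied once to $U=\emptyset$ (for nonemptiness) and once to the union of a given open cover (for compactness). Everything else is an unwinding of the definitions of the Smyth order, of the subbasic sets $\Box U$, and of Scott openness, so no serious obstacle is anticipated.
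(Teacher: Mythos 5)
Your proof is correct, and the paper itself gives no argument for this lemma (it is cited as a known result), so there is nothing to diverge from: your treatment is the standard one. Each step checks out — nonemptiness and compactness of $\bigcap\mathcal K$ via well-filteredness applied to $U=\emptyset$ and to the union of a cover, saturation as an intersection of upper sets, Lemma \ref{sups in Smyth} for the supremum, and Scott-openness of the basic sets $\Box U$ being exactly the well-filteredness condition.
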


\begin{lemma}\label{K(X) way below relation}  Let $X$ be a $T_0$ space. Then
 \begin{enumerate}[\rm (1)]
 \item $\mk (X)$ is semilattice \emph{(}the semilattice operation being $\bigcup$\emph{)}.
 \item Let $K_1, K_2\in \mk (X)$ and consider the following assertions:
 \begin{enumerate}[\rm (a)]
 \item $K_2\subseteq \ii~\! ~K_1$.
 \item $K_1\ll K_2$ in $\mk (X)$.
\end{enumerate}
\end{enumerate}
If $X$ is well-filtered, then \emph{(a)} $\Rightarrow$ \emph{(b)}, and if $X$ is locally compact, then \emph{(b)} $\Rightarrow$ \emph{(a)}.
 \begin{enumerate}[\rm (3)]
\item If $X$ is well-filtered and locally compact, then $\mk (X)$ is a continuous semilattice.
\end{enumerate}
\end{lemma}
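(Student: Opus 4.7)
My plan is to handle the three parts in order, with part (2)(b)$\Rightarrow$(a) carrying the technical weight. For part (1), I would observe that the union of two compact sets is compact, the union of two saturated (upper) sets is saturated, and unions preserve nonemptiness; hence $K_1 \cup K_2 \in \mk(X)$ whenever $K_1,K_2 \in \mk(X)$. Since $K_i \subseteq K_1\cup K_2$ translates, under the reverse-inclusion Smyth order, to $K_1\cup K_2 \sqsubseteq K_i$, and since any common $\sqsubseteq$-lower bound $K \in \mk(X)$ (one with $K \supseteq K_1, K_2$) satisfies $K \supseteq K_1\cup K_2$, i.e.\ $K \sqsubseteq K_1 \cup K_2$, the union is exactly the infimum in $(\mk(X), \sqsubseteq)$, making $\mk(X)$ a semilattice.

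For (2)(a)$\Rightarrow$(b) under well-filteredness, I would take an arbitrary $\mathcal K \in \mathcal D(\mk(X))$ with $\bigvee_{\mk(X)} \mathcal K \sqsupseteq K_2$; by Lemma~\ref{WF V less S}(1) this sup exists and equals $\bigcap \mathcal K$, so $\bigcap \mathcal K \subseteq K_2 \subseteq \ii~\! K_1$. Applying the well-filteredness of $X$ to the directed family $\mathcal K$ and the open set $\ii~\! K_1$ then yields some $K \in \mathcal K$ with $K\subseteq \ii~\! K_1 \subseteq K_1$, i.e.\ $K \sqsupseteq K_1$ in Smyth order, which is the required way-below relation.

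The direction (2)(b)$\Rightarrow$(a) under local compactness is the most delicate step. The strategy is to exhibit $K_2$ as the directed supremum in the Smyth order of its family of compact saturated neighborhoods $\mathcal K := \{Q\in\mk(X) : K_2 \subseteq \ii~\! Q\}$. Local compactness, applied pointwise to $K_2$ and then to a finite subcover, should establish two things simultaneously: any $Q_1, Q_2 \in \mathcal K$ admit some $Q \in \mathcal K$ with $Q \subseteq Q_1\cap Q_2$ (directedness in $\sqsubseteq$), and any $y \notin K_2$ is separated from $K_2$ by the open set $X \setminus \overline{\{y\}}$ (since $K_2$ is saturated, so $K_2 \cap \overline{\{y\}} = \emptyset$), producing some $Q \in \mathcal K$ avoiding $y$; consequently $\bigcap \mathcal K = K_2$. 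From $\bigvee \mathcal K = K_2 \sqsupseteq K_2$ together with $K_1 \ll K_2$, I obtain some $Q \in \mathcal K$ with $K_1 \sqsubseteq Q$, i.e.\ $Q\subseteq K_1$; combining, $K_2 \subseteq \ii~\! Q \subseteq \ii~\! K_1$.

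For part (3), I would combine everything: well-filteredness gives $\mk(X)$ the structure of a dcpo via Lemma~\ref{WF V less S}(1), part (1) already makes it a semilattice, and the equivalence (a)$\Leftrightarrow$(b) from part (2) identifies $\Downarrow K$ with $\{Q\in\mk(X) : K\subseteq \ii~\! Q\}$ for every $K \in \mk(X)$. The neighborhood construction from (b)$\Rightarrow$(a) then shows this set is directed in $\sqsubseteq$ with supremum $K$, establishing continuity. The main obstacle I foresee is precisely the construction of this directed family of compact saturated neighborhoods that underwrites both (2)(b)$\Rightarrow$(a) and (3): local compactness must be used both to produce an individual compact saturated neighborhood inside a prescribed open set and to close the family under pairwise refinement, by covering $K_2$ with finitely many pointwise compact neighborhoods contained in $\ii~\! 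Q_1 \cap \ii~\! Q_2$ and taking their union.
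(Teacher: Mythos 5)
Your proposal is correct. The paper does not actually prove this lemma---it is stated as one of "two known results" with citations to the literature---but your argument is the standard one underlying those references: the binary union as infimum in the Smyth order for (1); well-filteredness applied to the open set $\ii~\!K_1$ for (a)$\Rightarrow$(b); and, for (b)$\Rightarrow$(a) and for (3), the family of compact saturated neighborhoods of $K_2$, which local compactness (via pointwise neighborhoods and finite subcovers) makes directed, whose intersection is $K_2$ by saturation (separating any $y\notin K_2$ with $X\setminus\overline{\{y\}}$), and whose supremum therefore exists by Lemma \ref{sups in Smyth} and equals $K_2$.
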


\begin{definition}\label{property Q} A $T_0$ space $X$ is said to have \emph{property Q} if for any $K_1, K_2\in \mk (X)$, $K_2\ll K_1$ in $\mk (X)$ if{}f $K_2\subseteq \ii~\!K_1$.
\end{definition}

It follows from Lemma \ref{K(X) way below relation} that every locally compact well-filtered space has property {\rm Q}. Theorem \ref{main result 1} below shows that for a well-filtered space $X$ with the property {\rm Q}, $X$ is locally compact iff $\mk (X)$ is a continuous semilattice.

The following result is a direct inference of the Hofmann-Mislove Theorem.

\begin{proposition}\label{K(X) way below relation=Open case}\emph{(\cite{redbook})}  Let $X$ be a sober space. If $\mk (X)$ is continuous, then the following two conditions are equivalent:
 \begin{enumerate}[\rm (1)]
 \item $X$ has property Q.
 \item For any pair $(\mathcal U, \mathcal V)\in \sigma (\mathcal O(X))\times \sigma (\mathcal O(X))$, $\mathcal U\ll \mathcal V$ in $\sigma (\mathcal O(X))$ implies there is $V\in \mathcal V$ such that $V\subseteq \bigcap\mathcal U$.
\end{enumerate}
\end{proposition}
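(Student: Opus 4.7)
The proposition is a direct inference of the Hofmann--Mislove Theorem. Recall $\Phi:\mk(X)\to\mathrm{OFilt}(\mathcal O(X))$, $K\mapsto\{U\in\mathcal O(X):K\subseteq U\}$, is an order isomorphism satisfying $\bigcap\Phi(K)=K$ for sober $X$, and in particular $K_1\ll K_2$ in $\mk(X)$ if and only if $\Phi(K_1)\ll\Phi(K_2)$ in $\mathrm{OFilt}(\mathcal O(X))$. The whole argument is a translation along $\Phi$, with continuity of $\mk(X)$ supplying enough open filters to relate the two way-below relations in play.

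For $(2)\Rightarrow(1)$, one direction of property Q comes for free from Lemma~\ref{K(X) way below relation}(2), since sobriety implies well-filteredness. For the other, suppose $K_1\ll K_2$ in $\mk(X)$. The plan is to use continuity of $\mk(X)$ to upgrade $\Phi(K_1)\ll\Phi(K_2)$ from $\mathrm{OFilt}(\mathcal O(X))$ to $\sigma(\mathcal O(X))$: the decomposition $\Phi(K_2)=\bigcup\{\Phi(K):K\ll K_2\}$, a directed union of open filters, lets any directed Scott-open cover of $\Phi(K_2)$ be refined so that the $\ll$-relation carries through. Condition (2) then produces an open $V\in\Phi(K_2)$ with $V\subseteq\bigcap\Phi(K_1)=K_1$; since $V$ is open and $K_2\subseteq V\subseteq K_1$, we obtain $K_2\subseteq\ii K_1$, yielding the remaining direction of property Q.

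For $(1)\Rightarrow(2)$, given $\mathcal U\ll\mathcal V$ in $\sigma(\mathcal O(X))$, the plan is to use continuity of $\mk(X)$ and property Q to exhibit a directed family of Scott-open subsets $\mathcal W_i$ of $\sigma(\mathcal O(X))$ whose union contains $\mathcal V$, with each $\mathcal W_i$ contained in some principal filter $\ua V_i$ for $V_i\in\mathcal V$; the way-below hypothesis then places $\mathcal U\subseteq\mathcal W_i\subseteq\ua V_i$ for some $i$, giving $V_i\in\mathcal V$ with $V_i\subseteq\bigcap\mathcal U$. The main obstacle in both directions is the $\ll$-transfer between $\mathrm{OFilt}(\mathcal O(X))$ and $\sigma(\mathcal O(X))$: these share directed joins, but the sub-dcpo embedding does not automatically preserve $\ll$. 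Continuity of $\mk(X)$ is exactly the hypothesis that makes this transfer possible, and property Q is what converts the compact-saturated witnesses produced by the Hofmann--Mislove iso into the open witnesses inside $X$ that statement (2) demands.
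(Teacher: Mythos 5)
The paper itself offers no proof of this proposition---it is attributed to \cite{redbook} with the one-line remark that it is a direct inference of the Hofmann--Mislove Theorem---so your argument has to stand on its own, and as written it does not: both directions stop exactly at the hard step. You set up the order isomorphism $\Phi:\mk(X)\to\mathrm{OFilt}(\mathcal O(X))$ correctly, and you correctly locate the whole content of the proposition in the comparison of $\ll$ computed in the sub-dcpo $\mathrm{OFilt}(\mathcal O(X))$ with $\ll$ computed in the larger lattice $\sigma(\mathcal O(X))$. But you then assert that comparison rather than prove it. For $(2)\Rightarrow(1)$, the claim that the decomposition $\Phi(K_2)=\bigcup\{\Phi(K):K\ll K_2\}$ ``lets any directed Scott-open cover of $\Phi(K_2)$ be refined so that the $\ll$-relation carries through'' is not an argument: given a directed family $\{\mathcal W_j\}\subseteq\sigma(\mathcal O(X))$ covering $\Phi(K_2)$, the sets $\mathcal W_j$ need not contain any Scott-open filter at all, so there is no evident refinement. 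The natural source of such refinements---that Scott-open filters form a basis of $\sigma(\mathcal O(X))$---is available when $\mathcal O(X)$ is a continuous lattice, i.e.\ when $X$ is core compact, which for a sober space is equivalent to local compactness and hence is essentially the conclusion the proposition is circling around; you cannot invoke it for free.

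The gap in $(1)\Rightarrow(2)$ is of the same kind. If $\mathcal W$ is Scott open with $V\in\mathcal W\subseteq\ua V'$, then necessarily $V'\ll V$ in $\mathcal O(X)$; so the directed family you posit presupposes that below every $V\in\mathcal V$ there is some $V'\in\mathcal V$ way below it in $\mathcal O(X)$ with a Scott-open witness, a continuity property of $\mathcal O(X)$ that is nowhere established. (There is also an unaddressed directedness problem: $\mathcal W_1\cup\mathcal W_2\subseteq\ua(V_1\cap V_2)$, but $V_1\cap V_2$ need not lie in $\mathcal V$, since $\mathcal V$ is merely Scott open and not a filter.) A workable repair is to observe first that property Q together with continuity of $\mk(X)$ and sobriety already forces $X$ to be locally compact---this is precisely the argument of Theorem \ref{main result 1}, $(4)\Rightarrow(1)$: from $\ua x=\bigcap\Downarrow_{\mk(X)}\ua x\subseteq U$ one extracts $K\ll\ua x$ with $\ua x\subseteq\ii~\!K\subseteq K\subseteq U$---whence $\mathcal O(X)$ is a continuous lattice and Scott-open filters do form a basis of $\sigma(\mathcal O(X))$, after which the covering argument can be run and, symmetrically, the transfer needed for $(2)\Rightarrow(1)$ becomes accessible. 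Your sketch never takes this step, and without it neither transfer goes through.
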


\begin{theorem}\label{CI+WF is sober} \emph{(\cite{xu-shen-xi-zhao2})}
	Every first-countable well-filtered space is sober.
	\end{theorem}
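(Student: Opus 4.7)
The plan is to argue by contradiction. Suppose $X$ is first-countable and well-filtered, yet there exists $A\in\ir_c(X)$ with $A\neq\overline{\{x\}}$ for every $x\in X$; I will derive a contradiction with well-filteredness by exhibiting a countable filtered family in $\mk(X)$. Because well-filteredness implies the $d$-space property, $\mathcal{O}(X)\subseteq\sigma(X)$ and $A$ is Scott-closed in the specialization dcpo, so every chain in $A$ has a supremum in $A$. Zorn's Lemma then yields a nonempty set $\max(A)$ of maximal elements of $A$, and if $\max(A)$ were finite the equality $A=\bigcup_{m\in\max(A)}\da m$ together with irreducibility of $A$ would force $A=\overline{\{m\}}$ for some $m$, contradicting the assumption. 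So $\max(A)$ is infinite, and I fix pairwise distinct (hence pairwise incomparable) maximal elements $m_1,m_2,\ldots\in\max(A)$ together with, by first-countability, decreasing countable open neighborhood bases $\{U_n^{(i)}\}_{n\geq 1}$ at each $m_i$.

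Next I construct a sequence $(a_n)\subseteq A$ by induction. Using irreducibility of $A$ applied to finitely many opens each meeting $A$, together with the fact that $m_j\in A\cap(X\setminus\overline{\{m_i\}})$ whenever $j\neq i$, I can select
\[
a_n\in A\cap\bigcap_{i=1}^{n}U_n^{(i)}\cap\bigcap_{i=1}^{n}\bigl(X\setminus\overline{\{m_i\}}\bigr).
\]
Then $a_n$ is incomparable with each $m_i$ for $i\leq n$ (by maximality of $m_i$ in $A$), and since $\{U_n^{(i)}\}$ is a decreasing base the tail $(a_n)_{n\geq i}$ lies eventually in every neighborhood of $m_i$, so $(a_n)\to m_i$ topologically for every $i$. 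This simultaneous convergence makes any set of the form $\{a_n:n\geq k\}\cup F$, with $F$ a nonempty finite subset of $\{m_1,m_2,\ldots\}$, compact: any open cover must contain an open around some $m_j\in F$ which, by convergence to $m_j$, catches all but finitely many of the $a_n$'s.

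Finally I assemble such compact sets into a countable filtered family $\{K_k\}\subseteq\mk(X)$ and invoke well-filteredness with a carefully chosen open set $U$. Each $K_k$ meets $A$ (since it contains some $a_n\in A$), so $K_k\not\subseteq X\setminus A$; the key task is to arrange $\bigcap_k K_k\subseteq U$ while keeping $K_k\not\subseteq U$ for every $k$. The main obstacle is precisely this calibration: a naive construction that pins a single $m_i$ into every $K_k$ leaves $m_i\in\bigcap_k K_k\cap A$, and then no witnessing $U$ can exist. The resolution is a diagonal scheme using all of $m_1,m_2,\ldots$ as compactifying limit points in successive $K_k$'s, so that any hypothetical $y\in\bigcap_k K_k\cap A$ would dominate a cofinal subsequence of $(a_n)$; via Scott-closedness of $A$ and the incomparability of distinct maximals with every $a_n$, such a $y$ would be forced to be a maximum of $A$, giving $A=\overline{\{y\}}$, contrary to assumption. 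Hence $\bigcap_k K_k\cap A=\emptyset$, and the open set $U=X\setminus A$ yields the final contradiction with well-filteredness.
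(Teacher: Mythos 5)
The paper does not actually prove this theorem (it is imported from \cite{xu-shen-xi-zhao2}), so your argument must stand on its own. Everything up to the last step is sound: nonempty closed sets in a $d$-space have maximal elements, the set of maximal elements of $A$ must be infinite, the sequence $(a_n)$ produced from irreducibility and first countability exists, converges to every $m_i$, satisfies $a_n\not\le m_i$ for $i\le n$, and sets of the form $\{a_n: n\ge k\}\cup F$ with $F$ a finite nonempty set of the $m_i$ are compact. The genuine gap is the final ``diagonal scheme,'' which is precisely the step you leave unspecified, and with the $m_i$ fixed in advance it cannot be carried out. Well-filteredness requires the family $\{K_k\}$ to be filtered, i.e.\ downward directed under inclusion; if $K_k=\ua\bigl(\{a_n:n\ge k\}\cup F_k\bigr)$, then $K_{k'}\subseteq K_k$ forces every point of $F_{k'}$ to lie above some generator of $K_k$, hence above some $a_n$ with $n\ge k$ or above (equivalently, equal to) some member of $F_k$. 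Distinct maximal elements are pairwise incomparable, and your construction only controls $a_n\not\le m_i$ --- it never guarantees $a_n\le m_j$ for any of the chosen $m_j$ (the maximal element of $A$ above $a_n$ supplied by Zorn need not be among the countably many you fixed). So the sets $F_k$ must eventually stabilize, which places a fixed $m_i$ in $\bigcap_k K_k\cap A$: exactly the obstruction you yourself identify. As written, the countable filtered family with $\bigcap_k K_k\cap A=\emptyset$ and $K_k\cap A\neq\emptyset$ has not been produced, and this is the heart of the proof.

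The argument is repairable in two ways. (i) Interleave the choices: after selecting $a_k$, let $m_{k+1}$ be a maximal element of $A$ above $a_k$ (it exists by Zorn and differs from $m_1,\dots,m_k$ since $a_k\not\le m_i$ for $i\le k$); then $\ua m_{k+1}\subseteq\ua a_k$, so $K_k=\ua\bigl(\{a_n:n\ge k\}\cup\{m_{k+1}\}\bigr)$ is a genuinely decreasing sequence in $\mk(X)$, and your intended computation (any $y\in\bigcap_kK_k\cap A$ dominates a cofinal subsequence of $(a_n)$, hence, since $\da y$ is closed and the subsequence converges to every $m_i$, dominates and therefore equals every $m_i$ --- impossible) goes through. (ii) More economically, and closer to the cited source, dispense with the maximal elements in the sequence construction altogether: take decreasing bases $\{U^{(i)}_n\}_n$ at the points $a_i$ themselves and choose $a_{n+1}\in A\cap\bigcap_{i\le n}U^{(i)}_n\cap\bigcap_{i\le n}(X\setminus\da a_i)$, so that the sequence converges to its own earlier terms; then $K_n=\ua\{a_m:m\ge n\}$ is already compact and nested, and well-filteredness applied to $U=X\setminus A$ yields $y\in A$ above cofinally many, hence all, $a_i$, so $y\ge a_2\not\le a_1$ gives $y>a_1$. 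Starting this construction at an arbitrary $a_1\in A$ shows $A$ has no maximal element, contradicting the $d$-space property.
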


\begin{theorem}\label{WF core-compact is sober} \emph{(\cite{Lawson-Xi, xu-shen-xi-zhao1, xu-shen-xi-zhao2})} Every core compact well-filtered space is sober.
\end{theorem}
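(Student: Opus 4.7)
The plan is to apply the Hofmann-Mislove Theorem. Given an irreducible closed subset $A$ of $X$, I will produce a compact saturated set $K \in \mk(X)$ whose generic open filter $\Phi(K)$ coincides with the completely prime Scott-open filter $\mathcal F_A = \{U \in \mathcal O(X) : U \cap A \neq \emptyset\}$ arising from $A$. Once such a $K$ is exhibited, sobriety follows quickly: $K \cap A$ is necessarily nonempty (otherwise $X \setminus A \in \Phi(K) = \mathcal F_A$, contradicting $(X \setminus A) \cap A = \emptyset$), and since $A$ is a lower set in the specialization order, some minimal element $a$ of $K$ lies in $A$ by Lemma \ref{COMPminimalset}. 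Because $\mathcal F_A = \Phi(K)$, every open set meeting $A$ contains $a$, so $A \subseteq \overline{\{a\}}$; combined with $a \in A$ this gives the generic point $A = \overline{\{a\}}$.

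The substantive part is the construction of $K$. First, core-compactness yields that $\mathcal O(X)$ is a continuous lattice, so the interpolation property of the way-below relation gives, for each $U \in \mathcal F_A$, some $V \in \mathcal F_A$ with $V \ll U$. Iterating below any fixed $U \in \mathcal F_A$ produces a $\ll$-descending chain $U \gg V_1 \gg V_2 \gg \cdots$ lying entirely in $\mathcal F_A$. I would then build a Smyth-directed family $\mathcal K \subseteq \mk(X)$ whose members all meet $A$ and which is cofinal in $\mathcal F_A$, in the sense that every $K_0 \in \mathcal K$ is contained in some $U \in \mathcal F_A$ and every $U \in \mathcal F_A$ contains some $K_0 \in \mathcal K$. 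The construction is a Rudin's Lemma style argument, extracting from the interpolated chains finite sets $F \subseteq X$ whose upper closures $\ua F$ approximate $A$ within each $U \in \mathcal F_A$.

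Well-filteredness then closes the argument. By Lemma \ref{WF V less S}(1), $K := \bigcap \mathcal K$ lies in $\mk(X)$. If $U \supseteq K$ is open, well-filteredness supplies some $K_0 \in \mathcal K$ with $K_0 \subseteq U$; since $K_0 \cap A \neq \emptyset$, the same holds for $U$, placing $U$ in $\mathcal F_A$. This gives $\Phi(K) \subseteq \mathcal F_A$, while the cofinality of $\mathcal K$ in $\mathcal F_A$ gives the reverse inclusion. Hence $\Phi(K) = \mathcal F_A$, as required.

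The main obstacle is constructing the Smyth-directed family $\mathcal K$ when $X$ is only core-compact rather than locally compact. In a locally compact space, $V \ll U$ in $\mathcal O(X)$ is witnessed by an intermediate compact saturated set and the members of $\mathcal K$ essentially drop out by Lemma \ref{K(X) way below relation}; in the merely core-compact case no such compact witness need exist, so each $K_0$ must be produced inductively by a Rudin-type construction, with well-filteredness invoked to ensure the approximants remain in $X$ and do not escape into the sobrification. This is the technical core of the arguments in \cite{Lawson-Xi, xu-shen-xi-zhao1, xu-shen-xi-zhao2}.
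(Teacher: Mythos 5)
The paper does not prove this theorem at all --- it quotes it from \cite{Lawson-Xi, xu-shen-xi-zhao1, xu-shen-xi-zhao2} --- so your argument has to stand on its own, and it does not. Your reduction is correct as far as it goes: $\mathcal F_A=\{U\in\mathcal O(X) : U\cap A\neq\emptyset\}$ is a Scott-open filter because $A$ is irreducible, and if one can produce a filtered family $\mathcal K\subseteq\mk(X)$ whose members all meet $A$ and which is cofinal in $\mathcal F_A$, then Lemma \ref{WF V less S}(1) and well-filteredness give $K=\bigcap\mathcal K\in\mk(X)$ with $\Phi(K)=\mathcal F_A$, and the generic point of $A$ falls out as you describe. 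But that reduction is the routine part. The entire mathematical content of the theorem is the construction of $\mathcal K$, and at exactly that point you write that it is "a Rudin's Lemma style argument" whose details are "the technical core of the arguments in" the cited papers. That is a pointer to the sources, not a proof. The theorem resolved what had been a genuinely open problem precisely because no such construction was known: interpolation in the continuous lattice $\mathcal O(X)$ gives a $\ll$-descending chain of opens meeting $A$, but in a merely core compact space $V\ll U$ carries no compact witness, and it is not clear how to extract nonempty members of $\mk(X)$ from such a chain, let alone ones that meet $A$ and form a family cofinal in $\mathcal F_A$.

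A second, more specific concern: your outline demands more of $\mathcal K$ than the published arguments actually deliver. The standard route (via Rudin/KF/WD-sets) produces a filtered family $\mathcal K\subseteq\mk(X)$ for which $A$ is a \emph{minimal} closed set meeting every member of $\mathcal K$, and then combines that minimality with well-filteredness to manufacture the generic point; it does not produce a family cofinal in $\mathcal F_A$. Cofinality is clear only a posteriori: once $A=\overline{\{a\}}$ one may take $\mathcal K=\{\ua a\}$. So the single step you leave unproved is both the hard step and one you have stated in a form stronger than what the literature establishes. To turn this into a proof you would need either to carry out the construction of a cofinal $\mathcal K$ in full, or to switch to the minimality formulation and prove the accompanying lemma that in a well-filtered space every closed Rudin set has a generic point.
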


\begin{corollary}\label{WFcorcomp-sober}  A well-filtered space is locally compact if{}f it is core compact.
\end{corollary}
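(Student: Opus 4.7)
The plan is to obtain this as a direct consequence of the two theorems immediately preceding it, namely Theorem \ref{SoberLC=CoreC} and Theorem \ref{WF core-compact is sober}. Since the statement is an ``iff'', I will treat the two implications separately, although only one direction is substantive.

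For the easy direction ($\Rightarrow$), I would just note that local compactness of any $T_0$ space implies core compactness: every point has a neighborhood basis of the form $\ii\,\ua K$ with $K\in\mk(X)$, and from this one reads off that each open $U$ containing $x$ satisfies $V\ll U$ in $\mathcal O(X)$ for some open $V$ with $x\in V$, hence $\mathcal O(X)$ is a continuous lattice. (This is standard and needs no well-filteredness.) So I would just cite it briefly from \cite{redbook}.

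For the nontrivial direction ($\Leftarrow$), suppose $X$ is well-filtered and core compact. By Theorem \ref{WF core-compact is sober}, $X$ is then sober. Hence $X$ is a core compact sober space, and Theorem \ref{SoberLC=CoreC} (equivalence $(3)\Leftrightarrow(1)$) yields that $X$ is locally compact. That is the whole argument.

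The only ``obstacle'' is conceptual rather than technical: all the real work sits inside Theorem \ref{WF core-compact is sober}, which has already been established in \cite{Lawson-Xi, xu-shen-xi-zhao1, xu-shen-xi-zhao2}. Once sobriety is available for free under well-filteredness plus core compactness, the corollary collapses to the classical equivalence in Theorem \ref{SoberLC=CoreC}. I would therefore keep the proof to two or three lines, simply chaining the two cited theorems, and not reprove anything about the way-below relation in $\mathcal O(X)$.
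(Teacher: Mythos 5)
Your proof is correct and is exactly the argument the paper intends (the corollary is stated without proof precisely because it follows by chaining Theorem \ref{WF core-compact is sober} with Theorem \ref{SoberLC=CoreC}, plus the standard fact that local compactness implies core compactness). No issues.
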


By Theorem \ref{WF core-compact is sober}, Theorem \ref{SoberLC=CoreC} can be strengthened into the following one.

\begin{theorem}\label{SoberLC=CoreCNew}  For a $T_0$ space $X$, the following conditions are equivalent:
\begin{enumerate}[\rm (1)]
	\item $X$ is locally compact and sober.
	\item $X$ is locally compact and well-filtered.
	\item $X$ is core-compact and sober.
    \item $X$ is core compact and well-filtered.
\end{enumerate}
\end{theorem}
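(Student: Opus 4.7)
The plan is to reduce the four-way equivalence to the already-established three-way equivalence (1) $\Leftrightarrow$ (2) $\Leftrightarrow$ (3) in Theorem \ref{SoberLC=CoreC} by showing separately that (4) fits into the chain. Since the first three conditions are already known to be mutually equivalent, the only real work is to tie (4) to any one of them.

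First I would observe that (2) $\Rightarrow$ (4) is immediate: local compactness trivially implies core compactness (every locally compact $T_0$ space has a continuous topology lattice), and well-filteredness is unchanged, so (2) directly delivers (4). For the converse direction (4) $\Rightarrow$ (2), I would invoke Corollary \ref{WFcorcomp-sober}, which says that a well-filtered space is locally compact if{}f it is core compact; this upgrades core compactness to local compactness, while well-filteredness is preserved, giving (2). Combined with the known equivalence of (1), (2), (3), this closes the loop.

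Alternatively, and perhaps more transparently, one can route (4) through (3): if $X$ is core compact and well-filtered, then by Theorem \ref{WF core-compact is sober} $X$ is sober, so (4) implies (3); conversely, (3) gives sober (hence well-filtered, by the standard implication sober $\Rightarrow$ well-filtered recorded in the excerpt) plus core compact, which is exactly (4). Either route works, and I would pick whichever is cleanest to state.

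There is no genuine obstacle here: the entire content of the theorem, beyond the pre-existing Theorem \ref{SoberLC=CoreC}, is the non-trivial fact that core compactness plus well-filteredness forces sobriety, which is already cited as Theorem \ref{WF core-compact is sober}. The proof is therefore essentially a two-line bookkeeping argument assembling the previously stated results; the only thing worth being careful about is to state clearly which implications among (1)--(4) are being used at each step so the reader sees that the cycle is complete.
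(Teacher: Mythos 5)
Your proposal is correct and matches the paper's approach: the paper simply observes that Theorem \ref{WF core-compact is sober} (core compact $+$ well-filtered $\Rightarrow$ sober) is exactly what is needed to append condition (4) to the existing three-way equivalence of Theorem \ref{SoberLC=CoreC}, with the remaining implications being the trivial ones (sober $\Rightarrow$ well-filtered, locally compact $\Rightarrow$ core compact) that you also identify. Either of your two routes is fine, and your assessment that the whole content lies in the cited sobriety theorem is accurate.
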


Now we give one of the main results of this paper.

\begin{theorem}\label{main result 1}  For a well-filtered space $X$, the following conditions are equivalent:
\begin{enumerate}[\rm (1)]
	\item $X$ is locally compact.
	\item $\mk (X)$ is a continuous semilattice, and the upper Vietoris topology and the Scott topology on $\mk (X)$ agree.
	\item $\mk (X)$ is a continuous semilattice, and $\xi_X^\sigma : X \longrightarrow \Sigma~\!\! \mk (X)$, $x\mapsto\ua x$, is continuous.
    \item $\mk (X)$ is a continuous semilattice, and $X$ has property Q.
    \item $X$ is core compact.
\end{enumerate}
\end{theorem}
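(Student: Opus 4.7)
The plan is to run the cycle $(1) \Rightarrow (2) \Rightarrow (3) \Rightarrow (4) \Rightarrow (1)$ and to note that $(1) \Leftrightarrow (5)$ is already supplied by Corollary \ref{WFcorcomp-sober}. The hypothesis that $X$ is well-filtered is used throughout.

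For $(1) \Rightarrow (2)$, Lemma \ref{K(X) way below relation}(3) gives the continuity of $\mk(X)$, and Lemma \ref{WF V less S}(2) gives that the upper Vietoris topology is coarser than the Scott topology. For the opposite inclusion, Lemma \ref{K(X) way below relation}(2) under WF $+$ LC forces $K_0 \ll K \iff K \subseteq \ii~\!K_0$, so each Scott-basic open set $\{K \in \mk(X) : K_0 \ll K\}$ (which is a basis because $\mk(X)$ is continuous) coincides with the upper Vietoris open set $\Box~\!\ii~\! K_0$. Alternatively, one can invoke Schalk's theorem quoted in the introduction, since WF $+$ LC implies sobriety by Theorem \ref{SoberLC=CoreCNew}. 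Step $(2) \Rightarrow (3)$ is then immediate from Remark \ref{xi embdding}(2): the canonical embedding $\xi_X : X \to P_S(X)$ is continuous, and under (2) the codomain coincides with $\Sigma~\!\! \mk(X)$.

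The central step is $(3) \Rightarrow (4)$. Continuity of $\mk(X)$ is inherited, so only property Q needs to be established; the direction $K_2 \subseteq \ii~\!K_1 \Rightarrow K_1 \ll K_2$ is already provided by well-filteredness via Lemma \ref{K(X) way below relation}(2). For the reverse, suppose $K_1 \ll K_2$ in $\mk(X)$. Continuity of $\mk(X)$ makes $U := \{K \in \mk(X) : K_1 \ll K\}$ Scott-open, and continuity of $\xi_X^\sigma$ then makes $V := \{x \in X : K_1 \ll \ua x\}$ open in $X$. The bridge between $K_2$ and $V$ is the observation that for each $y \in K_2$ saturation gives $\ua y \subseteq K_2$, i.e., $K_2 \leq \ua y$ in the reverse-inclusion order on $\mk(X)$; combined with $K_1 \ll K_2$ this propagates to $K_1 \ll \ua y$, placing $y$ in $V$. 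Conversely any $z \in V$ has $K_1 \leq \ua z$, so $\ua z \subseteq K_1$ and hence $z \in K_1$. Thus $K_2 \subseteq V \subseteq K_1$ with $V$ open, which yields $K_2 \subseteq \ii~\!K_1$.

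Finally, for $(4) \Rightarrow (1)$, take $x \in U$ with $U$ open in $X$. Continuity of $\mk(X)$ gives $\ua x = \bigvee \Downarrow \ua x$ in $\mk(X)$, which in the usual inclusion reads $\bigcap \Downarrow \ua x = \ua x \subseteq U$. Since $\Downarrow \ua x$ is directed in $\mk(X)$ (hence filtered under usual inclusion), well-filteredness of $X$ yields some $L \in \Downarrow \ua x$ with $L \subseteq U$, and property Q applied to $L \ll \ua x$ gives $\ua x \subseteq \ii~\! L$, so $x \in \ii~\!L \subseteq L \subseteq U$, establishing local compactness. The only step demanding genuine work is $(3) \Rightarrow (4)$: the key technical idea is to transfer the abstract way-below relation in $\mk(X)$ back into pointwise interior data in $X$, and the observation that each $\ua y$ with $y \in K_2$ sits above $K_2$ in $\mk(X)$ is exactly what makes the continuity of $\xi_X^\sigma$ bite.
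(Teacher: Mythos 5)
Your proof is correct, and it reorganizes the paper's argument in a slightly different way. The paper proves the two sub-cycles $(1)\Rightarrow(2)\Rightarrow(3)\Rightarrow(1)$ and $(1)\Rightarrow(4)\Rightarrow(1)$, whereas you run the single cycle $(1)\Rightarrow(2)\Rightarrow(3)\Rightarrow(4)\Rightarrow(1)$; the genuinely new ingredient is your direct proof of $(3)\Rightarrow(4)$, which does not appear in the paper. It is valid: in a continuous domain the set $\{K\in\mk(X) : K_1\ll K\}$ is Scott open, its preimage $V$ under $\xi_X^\sigma$ is open, $K_2\subseteq V$ because $\ua y\sqsupseteq K_2$ for every $y\in K_2$ and $\ll$ is preserved upward in its second argument, and $V\subseteq K_1$ because $K_1\ll\ua z$ implies $\ua z\subseteq K_1$. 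This is the same technical device (pulling a Scott-open neighbourhood of $\ua x$, resp.\ of $K_2$, back along $\xi_X^\sigma$ and exploiting $K\sqsubseteq\ua z\Leftrightarrow \ua z\subseteq K$) that the paper uses in its $(3)\Rightarrow(1)$ step, just aimed at property Q instead of at local compactness directly. Your version has the small advantage of exhibiting an explicit implication $(3)\Rightarrow(4)$ between the two ``extra'' conditions, at the cost of making $(3)\Rightarrow(1)$ pass through $(4)$; the paper's version keeps $(4)$ logically independent of $(3)$ and closes each loop at $(1)$. A further minor difference: for $(1)\Rightarrow(2)$ you give a self-contained argument (under WF $+$ LC the Scott-basic opens $\{K : K_0\ll K\}$ equal the Vietoris opens $\Box\,\ii~\!K_0$ by Lemma \ref{K(X) way below relation}(2)), whereas the paper delegates this to Schalk's Proposition \ref{LC sober domain V=S}; both are fine, and your alternative avoids an external citation. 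The remaining steps $(2)\Rightarrow(3)$, $(4)\Rightarrow(1)$ and $(1)\Leftrightarrow(5)$ coincide with the paper's.
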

\begin{proof} (1) $\Rightarrow$ (2): By Theorem \ref{SoberLC=CoreC}, Lemma \ref{K(X) way below relation} and \cite[Lemma 7.26]{Schalk} (see Proposition \ref{LC sober domain V=S} below).

(2) $\Rightarrow$ (3):  By Remark \ref{xi embdding}.

(3) $\Rightarrow$ (1): For $x\in U\in \mathcal O(X)$, by Lemma \ref{WF V less S}, $\ua x\in \Box U\in \sigma (\mk (X))$, and hence by Lemma \ref{cont=CD}, there is $K\in \mk (X)$ with $\ua x\in \ii~\!_{\sigma (\mk (X))}\ua_{\mk (X)} K\subseteq \ua_{\mk (X)}K\subseteq \Box U$. Let $V=(\xi_{X}^{\sigma})^{-1}(\ii~\!_{\sigma (\mk (X))}\ua_{\mk (X)} K)$. Then by the continuity of $\xi_X^\sigma$, we have $V\in \mathcal O(X)$ and $x\in V\subseteq K\subseteq U$. Thus $X$ is locally compact.

(1) $\Rightarrow$ (4): By Lemma \ref{K(X) way below relation}.

(4) $\Rightarrow$ (1): Let $x\in U\in \mathcal O(X)$. Then by the continuity of $\mk (X)$, $\Downarrow_{\mk (X)}\ua x$ is directed (note that the order on $\mk (X)$ is the reverse inclusion order) and $\ua x=\bigvee_{\mk (X)} \Downarrow_{\mk (X)}\ua x$. It follows from Lemma \ref{sups in Smyth} that $\ua x=\bigvee_{\mk (X)} \Downarrow_{\mk (X)}\ua x=\bigcap \Downarrow_{\mk (X)}\ua x\subseteq U$, and hence by the well-filteredness of $X$, there is $K\in \Downarrow_{\mk (X)}\ua x$ such that $K\subseteq U$. Since $X$ has property {\rm Q}, we have $\ua x\subseteq \ii~\! K\subseteq K\subseteq U$. Therefore, $X$ is locally compact.

(1) $\Leftrightarrow$ (5): By Corollary \ref{WFcorcomp-sober} or Theorem \ref{SoberLC=CoreCNew}.

\end{proof}

Theorem \ref{main result 1} can be restated as the following one.

\begin{theorem}\label{main result 1+}  Let $X$ be a well-filtered space such that $\mk (X)$ is a continuous semilattice. Then the following conditions are equivalent:
\begin{enumerate}[\rm (1)]
	\item $X$ is locally compact.
	\item The upper Vietoris topology and the Scott topology on $\mk (X)$ agree.
	\item $\xi_X^\sigma : X \longrightarrow \Sigma~\!\! \mk (X)$, $x\mapsto\ua x$, is continuous.
    \item $X$ has property Q.
\end{enumerate}
\end{theorem}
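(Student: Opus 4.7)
The plan is to observe that Theorem \ref{main result 1+} is just a cosmetic reorganisation of Theorem \ref{main result 1}: under the blanket hypothesis that $\mk(X)$ is a continuous semilattice, the four conditions (1)--(4) are precisely conditions (1)--(4) of Theorem \ref{main result 1} with the continuity conjunct factored out. So I would prove the cycle $(1)\Rightarrow(2)\Rightarrow(3)\Rightarrow(1)$ and $(1)\Leftrightarrow(4)$, mirroring the earlier proof step for step.

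For $(1)\Rightarrow(2)$, I would invoke Theorem \ref{SoberLC=CoreC} to promote ``locally compact + well-filtered'' to ``locally compact + sober'', and then apply Schalk's comparison \cite[Lemma~7.26]{Schalk} (cited in the proof of Theorem \ref{main result 1}) which gives the coincidence of the upper Vietoris and Scott topologies for locally compact sober spaces. For $(2)\Rightarrow(3)$, since by Remark \ref{xi embdding} the map $\xi_X\colon X\to P_S(X)$ is a topological embedding, continuity of $\xi_X$ into $P_S(X)$ together with the identification of topologies in (2) immediately yields continuity of $\xi_X^\sigma\colon X\to \Sigma\,\mk(X)$.

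The one genuinely non-formal step is $(3)\Rightarrow(1)$, and this is where the continuity hypothesis on $\mk(X)$ is used. Take $x\in U\in\mathcal O(X)$. Then $\ua x\in\Box U$, and by Lemma \ref{WF V less S}(2) (valid because $X$ is well-filtered) $\Box U\in\sigma(\mk(X))$. Now apply Lemma \ref{cont=CD} to the continuous dcpo $\mk(X)$: there exists $K\in\mk(X)$ with $\ua x\in \ii_{\sigma(\mk(X))}\ua_{\mk(X)}K\subseteq \ua_{\mk(X)}K\subseteq\Box U$. Setting $V=(\xi_X^\sigma)^{-1}(\ii_{\sigma(\mk(X))}\ua_{\mk(X)}K)$ and using the continuity of $\xi_X^\sigma$ from (3), $V$ is open in $X$, and unwinding the definition of $\ua_{\mk(X)}$ (recall the order on $\mk(X)$ is reverse inclusion, so $Q\in\ua_{\mk(X)}K$ means $Q\subseteq K$) gives $x\in V\subseteq K\subseteq U$, establishing local compactness.

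Finally, $(1)\Rightarrow(4)$ is Lemma \ref{K(X) way below relation}(2) directly. For $(4)\Rightarrow(1)$, fix $x\in U\in\mathcal O(X)$. Continuity of $\mk(X)$ says $\Downarrow_{\mk(X)}\ua x$ is directed in $\mk(X)$ with supremum $\ua x$; by Lemma \ref{sups in Smyth} this supremum is $\bigcap \Downarrow_{\mk(X)}\ua x$, so $\bigcap\Downarrow_{\mk(X)}\ua x=\ua x\subseteq U$. Well-filteredness of $X$ then produces $K\in\Downarrow_{\mk(X)}\ua x$ with $K\subseteq U$, and property Q upgrades $K\ll \ua x$ to $\ua x\subseteq \ii\,K\subseteq K\subseteq U$, so $X$ is locally compact. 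The main care throughout is bookkeeping the reverse-inclusion order on $\mk(X)$ (sups are intersections, $\ua_{\mk(X)}K$ consists of compact saturated sets contained in $K$), which is where I expect the only mild friction.
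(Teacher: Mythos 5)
Your proposal is correct and follows essentially the same route as the paper: the paper obtains Theorem \ref{main result 1+} simply as a restatement of Theorem \ref{main result 1}, whose proof is exactly the cycle $(1)\Rightarrow(2)\Rightarrow(3)\Rightarrow(1)$ plus $(1)\Leftrightarrow(4)$ via Proposition \ref{LC sober domain V=S}, Remark \ref{xi embdding}, Lemma \ref{cont=CD}, Lemma \ref{sups in Smyth} and Lemma \ref{K(X) way below relation} that you reproduce. Your bookkeeping of the reverse-inclusion order (sups as intersections, $\ua_{\mk(X)}K=\{Q : Q\subseteq K\}$) matches the paper's usage.
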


\begin{corollary}\label{K(X) continuous implies LC redbook} \emph{(\cite[Proposition IV-2.19]{redbook})}  Let $X$ be a sober space having property Q. Then $\mk (X)$ is a continuous semilattice if{}f $X$ is locally compact.
\end{corollary}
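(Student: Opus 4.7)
The plan is to deduce this corollary as an immediate consequence of Theorem \ref{main result 1} (equivalently, Theorem \ref{main result 1+}) by first observing that every sober space is well-filtered, as recorded in the implications
\[
\text{sobriety} \Rightarrow \text{well-filteredness} \Rightarrow d\text{-space}.
\]
So under the hypotheses of the corollary, $X$ is a well-filtered space with property Q, which puts us squarely inside the framework of Theorem \ref{main result 1}.

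For the direction ``$X$ locally compact $\Rightarrow$ $\mk(X)$ is a continuous semilattice,'' I would simply cite Lemma \ref{K(X) way below relation}(3): any well-filtered locally compact space has $\mk(X)$ a continuous semilattice. Note that property Q is not needed for this direction; the conclusion follows from the combination of well-filteredness and local compactness alone.

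For the reverse direction ``$\mk(X)$ continuous semilattice $\Rightarrow$ $X$ locally compact,'' I would apply the implication (4) $\Rightarrow$ (1) of Theorem \ref{main result 1}. All three hypotheses needed there — well-filteredness, continuity of $\mk(X)$, and property Q — are available: well-filteredness comes from sobriety, continuity is the assumption of this direction, and property Q is the standing assumption of the corollary. This yields local compactness of $X$ directly.

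There is essentially no obstacle here; the work has already been done in establishing Theorem \ref{main result 1}. The only point that merits a line of comment in the written proof is the asymmetry in which hypotheses are used: property Q is inert for the ``locally compact $\Rightarrow$ continuous $\mk(X)$'' direction but indispensable for the converse, where it is what allows one to pass from a witness $K \ll \ua x$ in $\mk(X)$ back to a compact neighborhood $\ii\,K$ of $x$ inside a given open set $U$.
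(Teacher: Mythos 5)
Your proposal is correct and matches the paper's intended derivation: the corollary is obtained exactly by noting that sobriety implies well-filteredness and then invoking the equivalence (1) $\Leftrightarrow$ (4) of Theorem \ref{main result 1} (with Lemma \ref{K(X) way below relation}(3) covering the forward direction, where property Q is indeed not needed). Nothing further is required.
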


The following example shows that for a well-filtered space $X$, when $X$ lacks property {\rm Q}, Theorem \ref{main result 1} may not hold. It also shows that the well-filteredness of $X$ and the continuity of $\mk (X)$ together do not imply the sobriety of $X$ in general.

\begin{example}\label{WF+K(X) cont not sober}
	Let $X$ be a uncountably infinite set and $X_{coc}$ the space equipped with \emph{the co-countable topology} (the empty set and the complements of countable subsets of $X$ are open). Then
\begin{enumerate}[\rm (a)]
    \item $\Gamma (X_{coc})=\{\emptyset, X\}\bigcup X^{(\leqslant\omega)}$ and $\ir (X_{coc})=\ir_c(X_{coc})=\{X\}\bigcup \{\{x\} : x\in X\}$.
    \item $\mk (X_{coc})=X^{(<\omega)}\setminus \{\emptyset\}$ and $\ii~\!K=\emptyset$ for all $K\in \mk (X_{coc})$.
    \item $\mk (X_{coc})$ is a dcpo and every element in $\mk (X_{coc})$ is compact. Hence $\mk (X_{coc})$ is an algebraic domain.
    \item $X_{coc}$ is a well-filtered $T_1$ space, but it not sober.
    \item The upper Vietoris topology and the Scott topology on $\mk ((X_{coc}))$ do not agree.
	\item $\xi_{X_{coc}}^\sigma : X_{coc} \longrightarrow \Sigma~\!\! \mk (X_{coc})$, $x\mapsto\ua x$, is not continuous.
    \item $X_{coc}$ does not have property {\rm Q}.
    \item $X_{coc}$ is not locally compact and not first countable.
  \end{enumerate}
\end{example}

The following example shows that even for a sober space $X$, when $X$ lacks property {\rm Q}, \cite[Proposition IV-2.19]{redbook}) (i.e., Corollary \ref{K(X) continuous implies LC redbook}) may not hold.

\begin{example}\label{example 1.25} (\cite[Example II-1.25]{redbook})  Let $p$ be a point in $\beta (\mathbb{N})\setminus \mathbb{N}$, where $\beta (\mathbb{N})$ is the Stone-C\v ech compactification of the discrete space of natural numbers, and consider on $X=\mathbb{N}\bigcup\{p\}$ the
induced topology. Then the space $X$ is a non-discrete Hausdorff space, and hence a sober space. Every compact subset of $X$ is finite. Therefore, $\mk (X)$ is an algebraic domain and $X$ is not locally compact. By Theorem \ref{main result 1}, $X$ does not have property {\rm Q} and is not core compact.  Furthermore, the upper Vietoris topology and the Scott topology on $\mk (X)$ do not agree, and the mapping $\xi_X^\sigma : X \longrightarrow \Sigma~\!\! \mk (X)$, $x\mapsto\ua x$, is not continuous.
\end{example}

By Example \ref{WF+K(X) cont not sober}, Theorem \ref{main result 1} (or Theorem \ref{main result 1+}) strengthens \cite[Proposition IV-2.19]{redbook}) and shows that the converse of \cite[Proposition IV-2.19]{redbook}) holds in the following sense: for a well-filtered space $X$ such that K(X) is a continuous semilattice, $X$ is locally compact iff $X$ has property Q.

\begin{lemma}\label{xi Scott cont} For a dcpo $P$, $\xi_{\Sigma~\!\! P}^\sigma : \Sigma~\!\! P \longrightarrow \Sigma~\!\! \mk (\Sigma~\!\! P)$ is continuous.
\end{lemma}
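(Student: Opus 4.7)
The plan is to unpack both Scott topologies directly and verify the preimage condition. First I note that for any $x \in P$, the set $\ua x$ is indeed in $\mk(\Sigma~\!\!P)$: it is saturated (an upper set in the specialization order, which coincides with the original order of $P$), and it is compact in $\Sigma~\!\!P$ because any Scott-open cover must contain some $U \ni x$, and such a $U$ already contains $\ua x$ since it is an upper set. So $\xi_{\Sigma~\!\!P}^\sigma$ is well-defined.

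Next I would fix an arbitrary Scott-open set $\mathcal{U} \subseteq \mk(\Sigma~\!\!P)$ and show that $V := (\xi_{\Sigma~\!\!P}^\sigma)^{-1}(\mathcal{U}) = \{x \in P : \ua x \in \mathcal{U}\}$ is Scott-open in $P$. The upper-set condition is immediate: if $x \leq y$ in $P$, then $\ua y \subseteq \ua x$, which in the Smyth (reverse-inclusion) order on $\mk(\Sigma~\!\!P)$ reads $\ua x \sqsubseteq \ua y$; since $\mathcal{U}$ is upper in the Smyth order, $x \in V$ forces $y \in V$.

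The heart of the argument is the directed-sup condition. Given a directed $D \subseteq P$ with $\bigvee D \in V$, form the family $\mathcal{K}_D = \{\ua d : d \in D\} \subseteq \mk(\Sigma~\!\!P)$. This family is directed in the Smyth order because, whenever $d_1,d_2 \in D$ have a common upper bound $d_3 \in D$, we get $\ua d_3 \subseteq \ua d_1 \cap \ua d_2$, i.e.\ $\ua d_1, \ua d_2 \sqsubseteq \ua d_3$. The key identity is
\[
\bigcap_{d \in D} \ua d \;=\; \ua\!\bigvee D,
\]
which holds in any dcpo: an element dominates every $d \in D$ iff it dominates the directed supremum. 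Therefore $\bigcap \mathcal{K}_D = \ua \bigvee D \in \mk(\Sigma~\!\!P)$, and by Lemma \ref{sups in Smyth} we obtain $\bigvee_{\mk(\Sigma~\!\!P)} \mathcal{K}_D = \ua \bigvee D$. Since $\bigvee D \in V$, this supremum lies in $\mathcal{U}$; the Scott-openness of $\mathcal{U}$ then yields some $d \in D$ with $\ua d \in \mathcal{U}$, i.e.\ $d \in V$, as required.

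I do not anticipate a serious obstacle: once the identity $\bigcap_{d \in D} \ua d = \ua\bigvee D$ is in hand and Lemma \ref{sups in Smyth} is invoked, the argument is essentially forced. The only point to be careful about is distinguishing the original order of $P$ from the Smyth order on $\mk(\Sigma~\!\!P)$ when checking that $V$ is an upper set and that $\mathcal{K}_D$ is directed, since the reversal of inclusion must be tracked consistently throughout.
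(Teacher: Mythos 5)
Your proof is correct and follows essentially the same route as the paper's: the paper simply verifies that $\xi_{\Sigma~\!\! P}^\sigma$ preserves directed suprema via the identity $\ua\bigvee D=\bigcap_{d\in D}\ua d$ together with Lemma \ref{sups in Smyth}, which is exactly the core of your argument. You merely unpack the ``preserves directed sups $\Rightarrow$ Scott-continuous'' step by checking preimages of Scott-open sets directly, which is a fine (if slightly more verbose) presentation of the same idea.
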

\begin{proof} For any $D\in \mathcal D(P)$, we have $\xi_{\Sigma~\!\! P}^\sigma (\bigvee D)=\ua \bigvee D=\bigcap_{d\in D}\ua d=\bigvee_{\mk (\Sigma~\!\! P)}\xi_{\Sigma~\!\! P}^\sigma(D)$ by Lemma \ref{sups in Smyth}. So $\xi_{\Sigma~\!\! P}^\sigma : \Sigma~\!\! P \longrightarrow \Sigma~\!\! \mk (\Sigma~\!\! P)$ is continuous.
\end{proof}

We get the following corollary from Theorem \ref{main result 1} and Lemma \ref{xi Scott cont}.

\begin{corollary}\label{K(X) continuous implies LC redbook+1} For a dcpo $P$ having the well-filtered Scott topology, the following conditions are equivalent:
\begin{enumerate}[\rm (1)]
	\item $\Sigma~\!\! P$ is locally compact.
	\item $\mk (\Sigma~\!\! P)$ is a continuous semilattice, and the upper Vietoris topology and the Scott topology on $\mk (\Sigma~\!\! P)$ agree.
    \item $\mk (\Sigma~\!\! P)$ is a continuous semilattice, and $\Sigma~\!\! P$ has property Q.
    \item $\mk (\Sigma~\!\! P)$ is a continuous semilattice.
    \item $\Sigma~\!\! P$ is core compact.
\end{enumerate}
\end{corollary}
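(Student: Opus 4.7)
The plan is to reduce everything to Theorem \ref{main result 1} applied to the well-filtered space $X = \Sigma~\!\! P$. Since $\Sigma~\!\! P$ is well-filtered by hypothesis, Theorem \ref{main result 1} immediately delivers the equivalence of the four conditions: $\Sigma~\!\! P$ is locally compact; $\mk (\Sigma~\!\! P)$ is a continuous semilattice with upper Vietoris topology agreeing with the Scott topology; $\mk (\Sigma~\!\! P)$ is a continuous semilattice with $\xi_{\Sigma~\!\! P}^\sigma$ Scott-continuous; and $\mk (\Sigma~\!\! P)$ is a continuous semilattice with $\Sigma~\!\! P$ having property Q; together with core compactness of $\Sigma~\!\! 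P$. This already covers the equivalence (1) $\Leftrightarrow$ (2) $\Leftrightarrow$ (3) $\Leftrightarrow$ (5) of the corollary.

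What remains is to locate condition (4) in this chain. The implications (2) $\Rightarrow$ (4) and (3) $\Rightarrow$ (4) are trivial, since each strips away a companion clause. The real content is (4) $\Rightarrow$ (1), and here Lemma \ref{xi Scott cont} does the crucial work: for any dcpo $P$, the map $\xi_{\Sigma~\!\! P}^\sigma : \Sigma~\!\! P \longrightarrow \Sigma~\!\! \mk (\Sigma~\!\! P)$ is automatically continuous. Hence assuming (4), namely that $\mk (\Sigma~\!\! P)$ is a continuous semilattice, we obtain precisely condition (3) of Theorem \ref{main result 1} for $X = \Sigma~\!\! P$, and local compactness follows.

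There is no real obstacle; the corollary is essentially a packaging of Theorem \ref{main result 1} with Lemma \ref{xi Scott cont}. The one point to be careful about is the direction of the specialization order on $\mk (\Sigma~\!\! P)$ (reverse inclusion), so that the equality $\xi_{\Sigma~\!\! P}^\sigma(\bigvee D) = \bigcap_{d\in D} \ua d = \bigvee_{\mk (\Sigma~\!\! P)}\xi_{\Sigma~\!\! P}^\sigma(D)$ used in Lemma \ref{xi Scott cont} is applied consistently, and to ensure that the hypothesis ``$\Sigma~\!\! P$ is well-filtered'' is carried through every application of Theorem \ref{main result 1}. Once these bookkeeping details are in place, the proof is just the short chain (4) $\Rightarrow$ (3) of Theorem \ref{main result 1} $\Rightarrow$ (1), plus the direct citation of Theorem \ref{main result 1} for the other equivalences.
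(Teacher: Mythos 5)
Your proposal is correct and follows exactly the route the paper intends: the paper derives this corollary by applying Theorem \ref{main result 1} to the well-filtered space $\Sigma~\!\! P$ and using Lemma \ref{xi Scott cont} to supply the automatic continuity of $\xi_{\Sigma~\!\! P}^\sigma$, so that condition (4) upgrades to condition (3) of that theorem. Nothing further is needed.
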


\begin{definition}\label{semiclosed} Let $P$ be a poset equipped with a topology $\tau$.
\begin{enumerate}[\rm (1)]
\item $(P, \tau)$ is called \emph{upper semicompact}, if $\ua x$ is compact for any $x\in P$, or equivalently, if $\ua x\bigcap A$ is compact for any $x\in P$ and $A\in \Gamma ((P, \tau))$.
   \item $(P, \tau)$ is called \emph{weakly upper semicompact} if $\ua x\bigcap A$ is compact for any $x\in P$ and $A\in \ir_c((P, \tau))$.
    \end{enumerate}
\end{definition}

\begin{lemma}\label{Scott is WF1}\emph{(\cite{xuzhao})} For a dcpo $P$, if $(P, \lambda (P))$ is weakly upper semicompact \emph{(}especially, if $(P, \lambda (P))$ is upper semicompact or $P$ is bounded complete\emph{)}, then $(P, \sigma (P))$ is well-filtered.
\end{lemma}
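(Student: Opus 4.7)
The plan is to prove $(P,\sigma(P))$ is well-filtered by contradiction. Suppose there exist a filtered family $\mathcal{K}\subseteq\mk(\Sigma~\!\! P)$ and a Scott-open set $U$ with $\bigcap\mathcal{K}\subseteq U$ but no $K\in\mathcal{K}$ contained in $U$. Setting $A:=P\setminus U$ (Scott-closed, hence Lawson-closed), $K\cap A\neq\emptyset$ for every $K\in\mathcal{K}$, and the target is to produce a point of $A\cap\bigcap\mathcal{K}$, contradicting $\bigcap\mathcal{K}\subseteq U$.

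The first step is a Rudin/Zorn reduction in the Scott topology: apply Zorn's Lemma to the family of Scott-closed subsets $F\subseteq A$ satisfying $F\cap K\neq\emptyset$ for all $K\in\mathcal{K}$. The chain condition rests on Scott-compactness of each $K$, since a descending chain of nonempty Scott-closed subsets of $K$ has nonempty intersection. A minimal such $C$ exists, and a standard Rudin-type argument shows it is Scott-irreducible: a proper Scott-closed decomposition $C=C_1\cup C_2$ would, by minimality, give $K_i\in\mathcal{K}$ with $C_i\cap K_i=\emptyset$, so any $K_3\in\mathcal{K}$ with $K_3\subseteq K_1\cap K_2$ would have $C\cap K_3=\emptyset$, contradicting $C\cap K_3\neq\emptyset$.

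The Lawson hypothesis enters in the final step. Provided $C$ can be upgraded to a Lawson-irreducible closed set, for any fixed $x_0\in C$ the set $\ua x_0\cap C$ is Lawson-compact by weak upper semicompactness. After cofinally restricting $\mathcal{K}$ below a $K_0$ with $x_0\in K_0$, one obtains a filtered family of nonempty Lawson-closed subsets of $\ua x_0\cap C$ (by taking Lawson-closures of $\ua x_0\cap C\cap K$, or by passing to $\ua(C\cap K)\cap\ua x_0\cap C$); Lawson-compactness then forces the total intersection to be nonempty, producing a point of $C\subseteq A$ lying in every $K$ and hence contradicting $\bigcap\mathcal{K}\subseteq U$.

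The main obstacle is bridging Scott-irreducibility and Lawson-irreducibility of $C$: in general these do not coincide, and a Scott-irreducible Scott-closed set can split as a proper union of Lawson-closed sets. Both special cases in the statement sidestep this issue: under upper semicompactness each $\ua x_0$ is itself Lawson-compact unconditionally, so the finite intersection argument runs inside $\ua x_0$ without any appeal to irreducibility of $C$; and in a bounded complete dcpo the Scott topology is already sober, giving well-filteredness directly. For the general weakly upper semicompact case the natural remedy is an iterated Zorn/Rudin reduction inside the Lawson topology, using Lawson-compactness of $\ua x_0\cap C$ to verify the chain condition and extract a Lawson-minimal — hence Lawson-irreducible — closed refinement of $C$ still meeting every $K\in\mathcal{K}$, at which point the hypothesis applies cleanly.
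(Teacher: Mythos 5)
Your overall strategy --- a topological Rudin reduction followed by a finite--intersection--property argument that exploits Lawson compactness of sets of the form $\ua x\cap C$ --- is the right one, but the execution has several genuine gaps. The most serious is the step ``one obtains a filtered family of \emph{nonempty} Lawson-closed subsets of $\ua x_0\cap C$'': minimality of $C$ does give $C=\cl_{\sigma (P)}(K\cap C)$ for every $K\in\mathcal K$ (any closed subset of $C$ containing $K\cap C$ still meets every member of $\mathcal K$), but Scott closure is not down-closure, so this does \emph{not} yield $\ua x_0\cap C\cap K\neq\emptyset$ for a fixed $x_0\in C\cap K_0$ and $K\subseteq K_0$; establishing a statement of that kind is exactly where the real work of the proof lies, and you have not supplied it. Even granting nonemptiness, a point of $\bigcap_{K}\cl_{\lambda (P)}\bigl(\ua x_0\cap C\cap K\bigr)$ need not lie in $\bigcap\mathcal K$, since a Scott-compact saturated set of an arbitrary dcpo need not be Lawson closed, and neither is $\ua(C\cap K)$ in your alternative formulation. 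Your proposed repair of the Scott-versus-Lawson irreducibility mismatch is moreover circular: weak upper semicompactness gives Lawson compactness of $\ua x\cap A$ only for $A\in\ir_c((P,\lambda(P)))$, so it cannot be invoked to verify the chain condition of a Zorn/Rudin reduction whose purpose is to \emph{produce} such an $A$; and since the members of $\mathcal K$ are only Scott compact, descending chains of Lawson-closed sets meeting every $K$ have no compactness to lean on in the finer topology.

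The dispatch of the bounded complete case is also wrong: a bounded complete dcpo need not have sober Scott topology. Isbell's complete lattice is a counterexample, and reference \cite{xuxizhao} of this very paper exhibits a complete Heyting algebra whose Scott topology is not sober; such lattices are nevertheless well-filtered, which is the content of the present lemma, so sobriety cannot be the route. A more promising scheme, closer in spirit to \cite{xuzhao}, is to use the compactness hypothesis to show that $\da(K\cap C)$ is closed under directed suprema, hence Scott closed, hence equal to $C$ by minimality; a maximal element $m$ of $C$ (which exists by Zorn's Lemma because $C$ is Scott closed in a dcpo) then satisfies $m\leq k$ for some $k\in K\cap C$, whence $m=k\in K$ for every $K\in\mathcal K$, so $m\in C\cap\bigcap\mathcal K$, contradicting $\bigcap\mathcal K\subseteq U$. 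The delicate point, which your write-up leaves unresolved, is precisely how weak upper semicompactness feeds into that directed-suprema step.
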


By Corollary \ref{K(X) continuous implies LC redbook} and Lemma \ref {Scott is WF1}, we get the following corollary.

\begin{corollary}\label{K(X) continuous implies LC redbook+2}  For a dcpo $P$, if $(P, \lambda (P))$ is weakly upper semicompact \emph{(}especially, if $(P, \lambda (P))$ is upper semicompact or $P$ is bounded complete\emph{)}, then the following conditions are equivalent:
\begin{enumerate}[\rm (1)]
	\item $\Sigma~\!\! P$ is locally compact.
	\item $\mk (\Sigma~\!\! P)$ is a continuous semilattice, and the upper Vietoris topology and the Scott topology on $\mk (\Sigma~\!\! P)$ agree.
    \item $\mk (\Sigma~\!\! P)$ is a continuous semilattice, and $\Sigma~\!\! P$ has property Q.
    \item $\mk (\Sigma~\!\! P)$ is a continuous semilattice.
    \item $\Sigma~\!\! P$ is core compact.
\end{enumerate}
\end{corollary}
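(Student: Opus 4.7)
The plan is to reduce Corollary \ref{K(X) continuous implies LC redbook+2} to Corollary \ref{K(X) continuous implies LC redbook+1} by verifying its sole hypothesis, namely that the Scott topology $\sigma(P)$ is well-filtered. This is exactly what Lemma \ref{Scott is WF1} delivers: assuming $(P,\lambda(P))$ is weakly upper semicompact, the lemma yields that $(P,\sigma(P)) = \Sigma\,P$ is well-filtered.

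Once well-filteredness of $\Sigma\,P$ is in hand, the equivalence of the five conditions (1)--(5) follows immediately by invoking Corollary \ref{K(X) continuous implies LC redbook+1} verbatim. For the parenthetical strengthenings, I would note that upper semicompactness of $(P,\lambda(P))$ trivially implies weak upper semicompactness because $\ir_c((P,\lambda(P)))\subseteq \Gamma((P,\lambda(P)))$, and the case in which $P$ is bounded complete is already covered explicitly in the statement of Lemma \ref{Scott is WF1}. Hence, under either of these stronger assumptions, the hypothesis of Corollary \ref{K(X) continuous implies LC redbook+1} again holds and the same chain of equivalences is obtained.

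There is no essential obstacle here: the entire content of the corollary is packaged into the two earlier results, and the proof is purely a matter of chaining them. The only point to watch is to make sure the reduction is stated cleanly, so that the reader sees that the role of weak upper semicompactness (or its stronger variants) is solely to ensure well-filteredness of $\Sigma\,P$, which is precisely the missing ingredient relative to Corollary \ref{K(X) continuous implies LC redbook+1}.
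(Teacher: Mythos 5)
Your proposal is correct and follows essentially the same route as the paper: the paper also obtains this corollary by combining Lemma \ref{Scott is WF1} (to get well-filteredness of $\Sigma~\!\! P$ from weak upper semicompactness of $(P,\lambda(P))$) with the five-way equivalence already established for dcpos with well-filtered Scott topology. (The paper's text cites Corollary \ref{K(X) continuous implies LC redbook}, but this appears to be a slip for Corollary \ref{K(X) continuous implies LC redbook+1}, which is exactly the result you invoke and the one that actually yields all five equivalences.)
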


\section{First-countability of Smyth power spaces}

Now we consider the following question: for a first-countable (resp., second-countable) space $X$, does its Smyth power space $P_S(X)$ be first-countable (resp., second-countable)?

First, we have the following result, which was indicated in the proof of \cite[Proposition 6]{Brecht}.

\begin{theorem}\label{Smyth CII}  For a $T_0$ space, the following two conditions are equivalent:
\begin{enumerate}[\rm (1)]
\item $X$ is second-countable.
\item $P_S(X)$ is second-countable.
\end{enumerate}
\end{theorem}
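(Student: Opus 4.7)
The plan is to prove the two directions separately, with the forward direction $(1)\Rightarrow(2)$ requiring a small compactness argument and the reverse direction $(2)\Rightarrow(1)$ being essentially immediate from the embedding recalled in Remark~\ref{xi embdding}.

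For $(1)\Rightarrow(2)$, let $\mathcal B$ be a countable base for $\mathcal O(X)$ and consider the family
\[
\mathcal B^{\vee}=\{V_1\cup V_2\cup\cdots\cup V_n : n\in\mathbb N,\ V_1,\ldots,V_n\in\mathcal B\},
\]
which is again countable as a countable union of finite Cartesian powers of $\mathcal B$. I claim that $\{\Box W : W\in\mathcal B^{\vee}\}$ is a base for $P_S(X)$. Given a basic open set $\Box U$ of $P_S(X)$ and $K\in\Box U$, write $U=\bigcup\{V\in\mathcal B : V\subseteq U\}$. Since $K\subseteq U$ is compact, finitely many such basic sets $V_1,\ldots,V_n$ already cover $K$; their union $W=V_1\cup\cdots\cup V_n$ lies in $\mathcal B^{\vee}$ and satisfies $K\subseteq W\subseteq U$, i.e.\ $K\in\Box W\subseteq\Box U$. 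Hence $\{\Box W : W\in\mathcal B^{\vee}\}$ is a countable base for the upper Vietoris topology.

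For $(2)\Rightarrow(1)$, recall from Remark~\ref{xi embdding}(2) that the canonical mapping $\xi_X : X\longrightarrow P_S(X)$, $x\mapsto\ua x$, is a topological embedding. Thus $X$ is homeomorphic to a subspace of $P_S(X)$, and since second-countability is hereditary, the second-countability of $P_S(X)$ transfers to $X$.

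The only nontrivial point is the forward direction, and its sole subtlety is recognising that one must close $\mathcal B$ under finite unions (not merely under finite intersections) before applying $\Box(\cdot)$, in order to absorb the compactness argument that turns a cover of $K$ by basic open sets into a single basic open set containing $K$. Neither direction requires any hypothesis on $X$ beyond $T_0$.
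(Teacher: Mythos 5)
Your proposal is correct and follows essentially the same route as the paper: for $(1)\Rightarrow(2)$ both arguments close the countable base under finite unions and use compactness of $K$ to extract a finite subcover inside a given $U\supseteq K$, and for $(2)\Rightarrow(1)$ both use the embedding of $X$ into $P_S(X)$ from Remark~\ref{xi embdding} together with heredity of second-countability. No gaps.
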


\begin{proof}  (1) $\Rightarrow$ (2): Let $\mathcal{B}\subseteq O(X)$ be a countable base of $X$ and let $\mathcal{B}_S=\{\Box \bigcup\limits_{i=1}^n U_i: n\in \mathbb{N}\mbox {~and~} U_i\in \mathcal{B}\mbox{~for all~} 1\leq i\leq n\}$. Then $\mathcal{B}_S$ is countable. Now we show that $\mathcal{B}_S$ is a base of $P_S(X)$. Let $K\in \mk (X)$ and $U\in \mathcal O(X)$ with $K\in \Box U$. Then for each $k\in K$, there is $U_k\in \mathcal{B}$ with $k\in U_k\subseteq U$. By the compactness of $K$, there is a finite subset $\{k_1, k_2, ..., k_m\}\subseteq K$ such that $K\subseteq V=\bigcup\limits_{i=1}^m U_{k_i}\subseteq U$, and hence $\Box V\in \mathcal B_S$ and $K\in \Box V\subseteq \Box U$. Thus $\mathcal{B}_S$ is a base of $P_S(X)$, proving that $P_S(X)$ is second-countable.

(2) $\Rightarrow$ (1): As a subspace of $P_S(X)$, $P_S(\mathcal S^u(X))$ is second-countable, and hence $X$ is second-countable since $X$ is homeomorphic to $P_S(\mathcal S^u(X))$.

\end{proof}

 Next, we consider the first-countability. Since the first-countability is a hereditary property and any $T_0$ space $X$ is homeomorphic to $P_S(\mathcal S^u(X))$, a subspace of $P_S(\mathcal S(X))$, we have the following result.

\begin{proposition}\label{Smyth CI}  Let $X$ be a $T_0$ space. If $P_S(X)$ is first-countable, then $X$ is first-countable.
\end{proposition}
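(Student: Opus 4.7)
The proof is essentially a one-line observation already flagged in the prose preceding the statement, so my plan is to simply unpack the two ingredients that make it work and arrange them in order.

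The plan is to use the canonical embedding from Remark \ref{xi embdding}(3). Recall that the map $\xi_X : X \longrightarrow P_S(X)$, $x \mapsto \ua x$, is a topological embedding whose image is exactly $\mathcal S^u(X) = \{\ua x : x \in X\}$, and that $P_S(\mathcal S^u(X))$ is a subspace of $P_S(X)$ homeomorphic to $X$. So, starting from $P_S(X)$, I would first pass to the subspace $P_S(\mathcal S^u(X))$ and then transport the resulting countable base back to $X$ along this homeomorphism.

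The key step is the hereditary nature of first-countability: any subspace of a first-countable space is first-countable, since if $\{V_n\}_{n \in \mathbb N}$ is a countable neighbourhood base at a point $y$ in the ambient space, then $\{V_n \cap A\}_{n \in \mathbb N}$ is a countable neighbourhood base at $y$ in any subspace $A$ containing $y$. Applying this with the ambient space $P_S(X)$ and the subspace $P_S(\mathcal S^u(X))$ shows that $P_S(\mathcal S^u(X))$ is first-countable whenever $P_S(X)$ is. Finally, since $\xi_X$ is a homeomorphism between $X$ and $P_S(\mathcal S^u(X))$, a countable neighbourhood base at $\ua x$ in $P_S(\mathcal S^u(X))$ pulls back under $\xi_X$ to a countable neighbourhood base at $x$ in $X$, completing the argument.

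There is no real obstacle here; the only thing to be careful about is to quote Remark \ref{xi embdding}(3) for the fact that $\xi_X$ is actually a topological embedding (and not merely order-theoretic), which is what lets us conclude that $X$, as opposed to some coarser $T_0$ quotient, inherits first-countability. Everything else is a direct application of the standard hereditary property of first-countable spaces.
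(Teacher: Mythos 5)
Your proof is correct and follows exactly the paper's argument: the paper also derives this proposition from the hereditariness of first-countability together with Remark \ref{xi embdding}(3), namely that $X$ is homeomorphic to the subspace $P_S(\mathcal S^u(X))$ of $P_S(X)$. Nothing to add.
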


Consider in the plane $\mathbb{R}^2$ two concentric circles $C_i=\{(x,y)\in \mathbb{R}^2 :
x^2+y^2=i\}$, where $i=1, 2$, and their union $X=C_1\bigcup C_2$; the projection of $C_1$ onto $C_2$ from
the point $(0,0)$ is denoted by $p$. On the set $X$ we generate a topology by defining
a neighbourhood system $\{B(z): z\in X\}$ as follows: $B(z)=\{{z}\}$ for $z\in C_2$ and
$B(z)=\{U_j(z): j\in \mathbb{N}\}$ for $z\in C_1$, where $U_j=V_j\bigcup p(V_j\setminus \{z\})$ and $V_j$ is is the arc of $C_1$ with center at $z$ and of length $1/j$. The space $X$ is called the \emph{Alexandroff double circle} (see \cite[Example 3.1.26]{Engelking}).

\begin{proposition}\label{Alexandroff double circle property} \emph{(\cite{Engelking})} Let $X$ be the Alexandroff double circle. Then
\begin{enumerate}[\rm (1)]
\item  $X$ is Hausdorff and first-countable.
 \item  $X$ is not separable, and hence not second-countable.
 \item  $X$ is compact and locally compact.
 \item $C_1$ is a compact subspace of $X$.
 \item $C_2$ is a discrete subspace of $X$.
 \end{enumerate}
\end{proposition}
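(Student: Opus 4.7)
The plan is to verify the five items in turn, working directly from the explicit neighborhood basis. The key fact driving everything is that $C_2$ sits \emph{discretely on top} of $X$ while $C_1$ carries only the usual Euclidean topology of the circle; the only nontrivial interaction is how basic neighborhoods $U_j(z)=V_j\cup p(V_j\setminus\{z\})$ of a point $z\in C_1$ reach into $C_2$ through the projection $p$.

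For (1), first-countability is immediate: each $z\in C_1$ has the countable basis $\{U_j(z):j\in\mathbb N\}$ and each $w\in C_2$ has the one-point basis $\{\{w\}\}$. For the Hausdorff property I would split into three cases. Two points of $C_2$ are separated by their open singletons. Two distinct points $z,z'\in C_1$ are separated by $U_j(z)$ and $U_j(z')$ for $j$ large enough that the two defining arcs are disjoint; the ``upper'' pieces in $C_2$ are then automatically disjoint because $p$ is a bijection between $C_1$ and $C_2$. The mixed case $z\in C_1$, $w\in C_2$ uses $\{w\}$ together with some $U_j(z)$: the crucial observation is that $z$ was removed from $V_j$ before applying $p$, so $p(z)\notin U_j(z)$; and for $w=p(z')$ with $z'\ne z$, picking $j$ large enough that $z'\notin V_j$ finishes the job.

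For (5), every singleton $\{w\}$ with $w\in C_2$ is open in $X$, so $C_2$ is a discrete subspace. For (4), the subspace topology on $C_1$ is governed by $U_j(z)\cap C_1=V_j$, which is precisely the Euclidean topology on the circle, so $C_1$ is compact. For (3), I would derive compactness of $X$ from that of $C_1$: given an open cover $\mathcal U$, for each $z\in C_1$ pick $U_{j(z)}(z)$ contained in some member of $\mathcal U$, extract a finite subcollection $V_{j(z_1)},\dots,V_{j(z_n)}$ covering $C_1$ by Euclidean compactness, and observe that the corresponding sets $U_{j(z_k)}(z_k)$ together cover $C_1$ and all of $C_2$ except for at most the finitely many projected points $p(z_1),\dots,p(z_n)$; each such residual point then lies in a further member of $\mathcal U$, yielding a finite subcover. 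Local compactness follows automatically, since $X$ is compact Hausdorff.

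For (2), the uncountable discrete subspace $C_2$ forces any dense set $D\subseteq X$ to meet every open singleton $\{w\}$ with $w\in C_2$, so $|D|\ge|C_2|$ and $X$ is not separable; second-countability is then excluded as well. The one step I expect to require real attention is the residual-point bookkeeping in the compactness argument: one must use the injectivity of $p$ to see that the set of points of $C_2$ left uncovered after the arc cover has size at most $n$, rather than being a union of arc-images with single points removed which could in principle fail to exhaust $C_2$.
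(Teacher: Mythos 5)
Your proof is correct and is essentially the standard argument: the paper offers no proof of its own here (it simply cites Engelking's Example 3.1.26), and your case analysis for Hausdorffness, the identification of the subspace topology on $C_1$ with the Euclidean circle, and the finite-subcover argument with the residual points $p(z_1),\dots,p(z_n)$ all match the classical verification. The one step you flagged as delicate — that injectivity of $p$ gives $p(V_j\setminus\{z\})=p(V_j)\setminus\{p(z)\}$, so only finitely many points of $C_2$ escape the arc cover — is handled correctly.
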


The following example shows that the converse of Proposition \ref{Smyth CI} fails in general.

\begin{example}\label{X CI but Smyth not} Let $X=C_1\bigcup C_2$ be the Alexandroff double circle. Then by Proposition \ref{Alexandroff double circle property}, $X$ is a compact Hausdorff first-countable space and $C_1\in \mk(X)$. Now we prove that $P_S(X)$ is not first-countable. First, for any open subset $U\in \mathcal O(X)$ with $C_1\subseteq U$, there is a family $\{U_j=V_{n(j)}\bigcup p(V_{n(j)}\setminus \{z_j\}) : j\in J\}$ of basic open sets such that $C_1\subseteq \bigcup_{j\in J} U_j\subseteq U$, where $V_{n(j)}$ is the arc of $C_1$ with center at $z_j$ and of length $1/n(j)$, and $p$ is the projection of $C_1$ onto $C_2$ from
the point $(0,0)$. By the compactness of $C_1$, there is a finite set $\{z_{j_1}, z_{j_2}, ..., z_{j_n}\}\subseteq C_1$ such that $C_1\subseteq\bigcup\limits_{i=1}^n U_{j_i}\subseteq U$, and hence $C_2\setminus U\subseteq \{p(z_{j_1}), p(z_{j_2}), ..., p(z_{j_n})\}$. Thus $C_2\setminus U$ is finite. Suppose that $\{W_n : n\in \mathbb{N}\}$ is a countable family of open sets containing $C_1$. Then $C_2\setminus \bigcap_{n\in \mathbb{N}} W_n=\bigcup_{n\in \mathbb{N}} (C_2\setminus W_n)$ is countable. Choose $x\in C_2\bigcap\bigcap_{n\in \mathbb{N}} W_n$ and let $V=X\setminus \{x\}$. Then $C_1\subseteq V\in O(X)$ but $W_n\nsubseteq V$ for all $n\in \mathbb{N}$. Thus there is no countable base at $C_1$ in $P_S(X)$, proving that $P_S(X)$ is not first-countable.
\end{example}

\begin{theorem}\label{min Compact countable is Smth CI} Let $X$ be a first-countable $T_0$ space. If $\mathrm{min}(K)$ is countable for any $K\in \mk (X)$, then
$P_S(X)$ is first-countable.
\end{theorem}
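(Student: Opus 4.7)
The plan is to fix $K \in \mk(X)$ and build an explicit countable neighborhood base at $K$ in $P_S(X)$ out of first-countable bases at the minimal points of $K$. By Lemma \ref{COMPminimalset}, $K = \ua \mathrm{min}(K)$ and $\mathrm{min}(K)$ is itself a compact subset of $X$; by hypothesis it is also countable, so enumerate it as $\{x_i : i \in I\}$ with $I \subseteq \mathbb{N}$. Using the first-countability of $X$, for each $i \in I$ I would choose a decreasing countable open neighborhood base $\{V_{i,n} : n \in \mathbb{N}\}$ of $x_i$ in $X$ (i.e.\ $V_{i,n+1} \subseteq V_{i,n}$).

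Next I would form the countable family
$$\mathcal{F} = \left\{\, \bigcup_{k=1}^{m} V_{i_k, n_k} : m \in \mathbb{N},\ i_1, \dots, i_m \in I,\ n_1, \dots, n_m \in \mathbb{N} \,\right\}$$
of open subsets of $X$, which is countable since it is indexed by finite tuples drawn from the countable set $I \times \mathbb{N}$, and set $\mathcal{N}_K = \{\Box W : W \in \mathcal{F},\ K \subseteq W\}$. This is a countable collection of open neighborhoods of $K$ in $P_S(X)$. The remaining task is to check that every basic open neighborhood $\Box U$ of $K$ contains some member of $\mathcal{N}_K$. Given such a $U$, for each $i \in I$ we have $x_i \in U$, so by first-countability I may pick $n_i$ with $V_{i, n_i} \subseteq U$. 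The family $\{V_{i, n_i} : i \in I\}$ then covers $\mathrm{min}(K)$, and because $\mathrm{min}(K)$ is compact, a finite subcover $V_{i_1, n_{i_1}}, \dots, V_{i_m, n_{i_m}}$ suffices; setting $W = \bigcup_{k=1}^m V_{i_k, n_{i_k}}$ gives $W \in \mathcal{F}$ and $W \subseteq U$.

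The one subtle step, and the point I expect to be the main obstacle, is arguing that this $W$ contains all of $K$ (and not merely $\mathrm{min}(K)$), so that $\Box W$ actually belongs to $\mathcal{N}_K$. This is forced by the observation that open subsets of the $T_0$ space $X$ are saturated, i.e.\ upper sets in the specialization order: each $x_j \in \mathrm{min}(K)$ lies in some $V_{i_k, n_{i_k}}$, so the full upset $\ua x_j$ is swept into that $V_{i_k, n_{i_k}} \subseteq W$, and hence $K = \ua \mathrm{min}(K) = \bigcup_{j} \ua x_j \subseteq W$. Therefore $\Box W \in \mathcal{N}_K$ and $\Box W \subseteq \Box U$, so $\mathcal{N}_K$ is a countable neighborhood base at $K$, establishing first-countability of $P_S(X)$.
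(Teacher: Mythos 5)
Your proposal is correct and follows essentially the same route as the paper's proof: enumerate the countable compact set $\mathrm{min}(K)$, take $\Box$ of finite unions of basic neighborhoods of its points to get a countable family, use compactness of $\mathrm{min}(K)$ to extract a finite subcover inside a given $U$, and use that open sets are upper sets together with $K=\ua \mathrm{min}(K)$ to conclude $K\subseteq W$. The ``subtle step'' you flag is exactly the (implicit) justification in the paper's line ``Then $K\subseteq V\subseteq U$,'' so nothing further is needed.
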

\begin{proof} For each $x\in X$, by the first-countability of $X$, there exists a countable base $\mathcal{B}_x$ at $x$. Let $K\in \mk (X)$. Then by  assumption $\mathrm{min} (K)$ is countable. Let $\mathcal{B}_K=\{\Box \bigcup_{c\in C}\varphi (c) : C\in \mathrm{min} (K)^{(<\omega)} \mbox{~and~}\varphi\in\prod\limits_{c\in C}\mathcal{B}_c\}$. Then $\mathcal{B}_K$ is countable. Now we show that $\mathcal{B}_K$ is a base at $K$.
Suppose that $U\in \mathcal O(X)$ and $K\in \Box U$. Then $\mathrm{min} (K)\subseteq K\subseteq U$. For each $k\in \mathrm{min}(K)$, there is a $\psi(k)\in \mathcal{B}_k$ with $k\in \psi(k)\subseteq U$.  By the compactness of $\mathrm{min}(K)$, there is a finite set $\{k_1, k_2, ..., k_m\}\subseteq \mathrm{min}(K)$ such that $\mathrm{min}(K)\subseteq \bigcup\limits_{i=1}^{m}\psi(k_i)\subseteq U$. Let $V=\bigcup\limits_{i=1}^{m}\psi(k_i)$. Then $K\subseteq V\subseteq U$. It follows that $\Box V\in \mathcal{B}_K$ and $K\in \Box V\subseteq \Box U$, proving that $\mathcal{B}_K$ is a base at $K$. Thus $P_S(X)$ is first-countable.
\end{proof}

\begin{corollary}\label{Compact countable is Smth CI} Let $X$ be a first-countable $T_0$ space. If all compact subsets of $X$ are countable, then
$P_S(X)$ is first-countable.
\end{corollary}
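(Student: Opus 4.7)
The plan is to derive this corollary directly from Theorem \ref{min Compact countable is Smth CI}, so the only work is verifying the hypothesis of that theorem from the stronger hypothesis assumed here. Namely, I must check that if every compact subset of $X$ is countable, then $\mathrm{min}(K)$ is countable for each $K\in\mk(X)$.

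First I would fix an arbitrary $K\in \mk(X)$. By definition, $K$ is a compact saturated subset of $X$, so in particular $K$ is a compact subset of $X$, and hence by hypothesis $K$ is countable. Since $\mathrm{min}(K)\subseteq K$ and any subset of a countable set is countable, $\mathrm{min}(K)$ is countable. (Alternatively one could invoke Lemma \ref{COMPminimalset} to note that $\mathrm{min}(K)$ is itself compact and therefore countable by hypothesis, but this is unnecessary since countability passes to subsets.)

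Having established that the hypothesis of Theorem \ref{min Compact countable is Smth CI} is satisfied, I would conclude immediately that $P_S(X)$ is first-countable. There is no obstacle here; the corollary is essentially just a specialization of the preceding theorem once one observes that countability of $K$ is inherited by $\mathrm{min}(K)$.
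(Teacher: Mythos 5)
Your proposal is correct and matches the paper's intent exactly: the corollary is stated without proof precisely because, as you observe, $\mathrm{min}(K)\subseteq K$ is countable whenever $K$ is, so Theorem \ref{min Compact countable is Smth CI} applies immediately.
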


\begin{proposition}\label{metric space is Smth CI} For a metric space $(X, d)$, $P_S((X, d))$ is first-countable.
\end{proposition}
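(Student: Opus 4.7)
The plan is to bypass Theorem \ref{min Compact countable is Smth CI} and Corollary \ref{Compact countable is Smth CI} altogether, since in a metric space the specialization order is trivial and so $\mathrm{min}(K)=K$, which can be uncountable (e.g.\ $K=[0,1]$). Instead I will exhibit a concrete countable local base at each $K\in\mk((X,d))$ directly from the metric structure, using only Proposition \ref{metric space compact sets}.

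Fix $K\in\mk((X,d))$. For every $n\in\mathbb{N}$ the set $B(K,1/n)=\bigcup_{k\in K}B(k,1/n)$ is open in $X$ and contains $K$, so $\Box B(K,1/n)$ is a basic open set of $P_S((X,d))$ containing $K$. I propose the countable family
\[
\mathcal{B}_K=\{\Box B(K,1/n):n\in\mathbb{N}\}
\]
as a local base at $K$. To verify this, let $\mathcal{W}$ be any open neighborhood of $K$ in $P_S((X,d))$. By the definition of the upper Vietoris topology, there exists $U\in\mathcal{O}(X)$ with $K\in\Box U\subseteq\mathcal{W}$. Now $K$ is a compact subset of the metric space $(X,d)$ contained in the open set $U$, so Proposition \ref{metric space compact sets} furnishes some $r>0$ with $K\subseteq B(K,r)\subseteq U$. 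Picking $n\in\mathbb{N}$ with $1/n<r$ gives $B(K,1/n)\subseteq B(K,r)\subseteq U$, and hence
\[
K\in\Box B(K,1/n)\subseteq\Box U\subseteq\mathcal{W}.
\]
Thus $\mathcal{B}_K$ is a countable local base at $K$ in $P_S((X,d))$.

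There is no real obstacle here: the argument rests entirely on the uniform neighborhood result of Proposition \ref{metric space compact sets}, which says that any open set containing a compact set in a metric space already contains a metric $r$-thickening of that compact set. This is precisely what converts the uncountable family of open supersets of $K$ into a cofinal countable subfamily indexed by $1/n$, and it is the reason metric spaces behave better than the general first-countable $T_0$ spaces considered in Example \ref{X CI but Smyth not}.
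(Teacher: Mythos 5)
Your proof is correct and is essentially the paper's own argument: the paper also takes $\{\Box B(K,1/n) : n\in\mathbb{N}\}$ as the countable local base at $K$ and justifies it by Proposition \ref{metric space compact sets}, merely leaving the verification you spelled out implicit. Your opening observation that Theorem \ref{min Compact countable is Smth CI} cannot be invoked here (since $\mathrm{min}(K)=K$ may be uncountable in a metric space) is also accurate and consistent with the paper's choice to give a direct proof.
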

\begin{proof} For $K\in \mk ((X, d))$, let $\mathcal B_K=\{B(K, 1/n) : n\in \mathbb{N}\}$. Then by Proposition \ref{metric space compact sets}, $\mathcal B_K=\{B(K, 1/n) : n\in \mathbb{N}\}$ is a countable base at $K$ in $P_S((X, d))$. Thus $P_S((X, d))$ is first-countable.

\end{proof}

\section{Coincidence of the upper Vietoris topology and Scott topology}

For a well-filtered space $X$, from Theorem \ref{main result 1} we know that it is an important property that the upper Vietoris topology agrees with the Scott topology on $\mk (X)$. In this section we investigate the conditions under which the upper Vietoris topology coincides with the Scott topology on $\mk (X)$.

\begin{proposition}\label{Heckmann Scott=V} \emph{(\cite{Heckmann})} Let $P$ be a dcpo. If $\Sigma ~\!\! P$ is well-filtered and locally compact, then the upper Vietoris topology agrees the Scott topology on $\mk (\Sigma ~\!\! P)$.
\end{proposition}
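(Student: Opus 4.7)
The plan is to show both inclusions between $\sigma(\mk(\Sigma P))$ and the upper Vietoris topology on $\mk(\Sigma P)$. The inclusion "upper Vietoris $\subseteq$ Scott" is immediate from Lemma~\ref{WF V less S}(2) since $\Sigma P$ is well-filtered, so the whole work lies in proving the reverse inclusion under the extra assumption of local compactness.

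For that reverse inclusion I would fix a Scott open set $\mathcal U \subseteq \mk(\Sigma P)$ and an element $K \in \mathcal U$, and produce a basic Vietoris open neighbourhood of $K$ contained in $\mathcal U$. Since $\Sigma P$ is well-filtered and locally compact, Lemma~\ref{K(X) way below relation}(3) tells us that $\mk(\Sigma P)$ is a continuous semilattice. Recalling that the order on $\mk(\Sigma P)$ is the reverse inclusion, continuity yields that $\Downarrow_{\mk(\Sigma P)} K$ is directed with supremum $K$, and since $\mathcal U$ is Scott open it must meet this set: pick $L \in \mathcal U$ with $L \ll K$ in $\mk(\Sigma P)$.

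Now I invoke the locally compact well-filtered half of Lemma~\ref{K(X) way below relation}(2), which gives $K \subseteq \ii\, L$. Consequently $K \in \Box\,\ii\, L$, and $\Box\,\ii\,L$ is a basic open set of $P_S(\Sigma P)$. For any $K' \in \Box\,\ii\,L$ we have $K' \subseteq \ii\,L \subseteq L$, i.e.\ $L \sqsubseteq K'$ in $\mk(\Sigma P)$; because $\mathcal U$ is Scott open (hence an upper set with respect to $\sqsubseteq$) and contains $L$, it also contains $K'$. Thus $K \in \Box\,\ii\,L \subseteq \mathcal U$, which shows that $\mathcal U$ is open in the upper Vietoris topology.

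The only real point to watch is the bookkeeping of the reverse-inclusion order on $\mk(\Sigma P)$: one must remember that $L \ll K$ in $\mk(\Sigma P)$ corresponds to $L$ being a "larger" saturated set with $K$ in its interior, and that Scott upper sets are upward with respect to $\sqsubseteq$ (i.e.\ closed under passage to smaller saturated subsets). Once this is handled correctly the argument is short; the two substantive inputs are the continuity of $\mk(\Sigma P)$ (Lemma~\ref{K(X) way below relation}(3)) and the identification of the way-below relation with "being contained in the interior" (Lemma~\ref{K(X) way below relation}(2)), both of which are granted by the hypotheses on $\Sigma P$.
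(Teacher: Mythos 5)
Your proof is correct, but it takes a different route from the paper. The paper states this proposition as a cited result of Heckmann and, for the analogous general statements (Theorem~\ref{main result 1}, (1)~$\Rightarrow$~(2), and Corollary~\ref{LC WF domain V=S}), it goes through sobriety: a locally compact well-filtered space is sober (Theorem~\ref{SoberLC=CoreC}/\ref{SoberLC=CoreCNew}), and then Schalk's result (Proposition~\ref{LC sober domain V=S}) for locally compact \emph{sober} spaces is invoked. You instead give a direct argument that never uses sobriety or the Hofmann--Mislove machinery: the inclusion of the upper Vietoris topology in the Scott topology comes from Lemma~\ref{WF V less S}(2), and for the converse you use the continuity of the semilattice $\mk(\Sigma P)$ (Lemma~\ref{K(X) way below relation}(3)) to interpolate an $L\ll K$ inside a Scott open $\mathcal U$, then the locally compact half of Lemma~\ref{K(X) way below relation}(2) to convert $L\ll K$ into $K\subseteq \ii\,L$, so that $K\in\Box\,\ii\,L\subseteq\mathcal U$. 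Your bookkeeping of the Smyth order is right throughout ($\Downarrow_{\mk}K$ is directed with supremum $K$ by Lemma~\ref{sups in Smyth}, and Scott open sets are $\sqsubseteq$-upper, i.e.\ closed under shrinking the compact set). Note also that your argument nowhere uses that the space is a Scott space of a dcpo, so it actually proves the stronger statement for an arbitrary locally compact well-filtered $T_0$ space, which is exactly Corollary~\ref{LC WF domain V=S}; what the paper's route buys instead is the reuse of the already-cited Schalk lemma at the cost of invoking the nontrivial theorem that core compact (or locally compact) well-filtered spaces are sober.
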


From Lemma \ref{quasic=LHC} and Proposition \ref{Heckmann Scott=V} we get the following result.

\begin{proposition}\label{quasicont domain V=S} \emph{(\cite{Klause-Heckmann})} For a quasicontinuous domain $P$, the upper Vietoris topology agrees with the Scott topology on $\mk (\Sigma~\!\! P)$.
\end{proposition}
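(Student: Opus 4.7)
The plan is to chain together the two results the author has just cited. Since $P$ is a quasicontinuous domain, Lemma \ref{quasic=LHC}(2) gives that $\Sigma~\!\! P$ is sober, and sobriety implies well-filteredness (as recorded in the implications stated after the definition of well-filteredness). So the first ingredient of Proposition \ref{Heckmann Scott=V} is already in place.

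Next, I would produce local compactness of $\Sigma~\!\! P$. By Lemma \ref{quasic=LHC}(1), $\Sigma~\!\! P$ is locally hypercompact: for every $x \in U \in \sigma(P)$ there is $F \in P^{(<\omega)}$ with $x \in \ii~\!\,\ua F \subseteq \ua F \subseteq U$. The short observation I need is that $\ua F$ is compact saturated in $\Sigma~\!\! P$: for any $y \in P$, $\ua y$ is saturated and compact in any $T_0$ topology (every open set containing $y$ contains $\ua y$), so $\ua F = \bigcup_{y \in F}\ua y$ is a finite union of compact sets, hence compact. Thus locally hypercompact spaces are in particular locally compact, and $\Sigma~\!\! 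P$ is locally compact.

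Having verified that $\Sigma~\!\! P$ is both well-filtered and locally compact, Proposition \ref{Heckmann Scott=V} applies and yields at once that the upper Vietoris topology agrees with the Scott topology on $\mk(\Sigma~\!\! P)$, which is the desired conclusion. No genuine obstacle arises; the only mildly non-trivial step is the passage from local hypercompactness to local compactness, and this is immediate from compactness of principal filters in the $T_0$ setting. One could alternatively route through Theorem \ref{main result 1}: well-filtered plus locally compact implies $\mk(\Sigma~\!\! P)$ is a continuous semilattice with the coincidence of topologies, but going directly through Proposition \ref{Heckmann Scott=V} is the shortest path.
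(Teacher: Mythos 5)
Your proof is correct and follows the same route the paper takes: the paper derives this proposition directly from Lemma \ref{quasic=LHC} (quasicontinuity gives sobriety and local hypercompactness, hence well-filteredness and local compactness of $\Sigma~\!\! P$) combined with Proposition \ref{Heckmann Scott=V}. Your only addition is to spell out why $\ua F$ is compact saturated, which is a correct and harmless elaboration.
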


For a general $T_0$ space $X$, Schalk \cite{Schalk} proved the following result.

\begin{proposition}\label{LC sober domain V=S} \emph{(\cite{Schalk})} If $X$ is a locally compact sober space, then the upper Vietoris topology and the Scott topology on $\mk (X)$ coincide.
\end{proposition}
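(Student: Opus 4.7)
The plan is to verify both inclusions between the upper Vietoris topology and $\sigma(\mk(X))$ on $\mk(X)$. The inclusion of the upper Vietoris topology into the Scott topology is essentially free: sobriety forces well-filteredness, so Lemma \ref{WF V less S}(2) applies and gives that $P_S(X)$ is a $d$-space, whence every basic upper Vietoris open $\Box U$ is already Scott open.

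For the reverse inclusion, I would begin by observing that $\mk(X)$ is a continuous semilattice: this is exactly Lemma \ref{K(X) way below relation}(3), applied to the locally compact and (via sobriety) well-filtered space $X$. This continuous structure is the engine of the proof. Given a Scott open $\mathcal U \subseteq \mk(X)$ and any $K \in \mathcal U$, continuity lets me approximate $K$ from below: the set $\Downarrow_{\mk(X)} K$ is directed with join $K$, and Scott openness of $\mathcal U$ forces it to meet $\Downarrow_{\mk(X)} K$, producing some $K' \ll K$ in $\mk(X)$ with $K' \in \mathcal U$.

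At this point the second, more concrete half of Lemma \ref{K(X) way below relation}(2) enters—this time in its locally compact direction: the abstract relation $K' \ll K$ upgrades to the concrete condition $K \subseteq \ii K'$. Setting $V = \ii K'$, I get $K \in \Box V$, and I would then verify $\Box V \subseteq \mathcal U$. Indeed, any $K'' \in \Box V$ satisfies $K'' \subseteq V \subseteq K'$, so $K' \sqsubseteq K''$ in the Smyth order, and since $\mathcal U$ is upper-closed and contains $K'$, we conclude $K'' \in \mathcal U$. Therefore $\mathcal U$ is open in the upper Vietoris topology.

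I do not foresee any real obstacle; the only subtlety is book-keeping around the direction of the order on $\mk(X)$ (reverse inclusion), which means that $K' \ll K$ corresponds to $K$ sitting inside the interior of $K'$ rather than the other way around. The whole argument is essentially a careful packaging of the two halves of Lemma \ref{K(X) way below relation} with the definitions of the Scott and upper Vietoris topologies.
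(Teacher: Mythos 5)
Your proof is correct, and it is the standard argument (the paper itself gives no proof here, deferring to Schalk's Lemma 7.26, which proceeds exactly this way): well-filteredness gives the upper Vietoris topology $\subseteq$ Scott topology via Lemma \ref{WF V less S}, and continuity of $\mk(X)$ plus the locally compact direction of Lemma \ref{K(X) way below relation}(2) interpolates a $K'\ll K$ with $K\subseteq\ii~\!K'$, so that $\Box(\ii~\!K')$ witnesses upper Vietoris openness of any Scott open set. The order bookkeeping ($K'\ll K$ meaning $K\subseteq\ii~\!K'$ under reverse inclusion) is handled correctly.
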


By Theorem \ref{SoberLC=CoreCNew} and Proposition \ref{LC sober domain V=S}, we have the following corollary.

\begin{corollary}\label{LC WF domain V=S}  If $X$ is a core compact well-filtered space, then the upper Vietoris topology and the Scott topology on $\mk (X)$ coincide.
\end{corollary}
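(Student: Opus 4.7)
The plan is simply to chain together two results already established in the excerpt, so this corollary is essentially immediate. First I would invoke Theorem \ref{SoberLC=CoreCNew}, which lists ``core compact and well-filtered'' (condition (4)) as equivalent to ``locally compact and sober'' (condition (1)). Thus from the hypothesis that $X$ is core compact and well-filtered, I obtain that $X$ is locally compact and sober.

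Having upgraded the hypothesis, I would then apply Proposition \ref{LC sober domain V=S} directly: since $X$ is now known to be locally compact and sober, the upper Vietoris topology and the Scott topology on $\mk(X)$ coincide, which is exactly the conclusion.

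There is no real obstacle here; the content of the corollary lies entirely in the nontrivial equivalence supplied by Theorem \ref{WF core-compact is sober} (via Theorem \ref{SoberLC=CoreCNew}), namely that a core compact well-filtered space is automatically sober. Once sobriety is in hand, Schalk's theorem (Proposition \ref{LC sober domain V=S}) finishes the argument. So the proof will be a two-line citation rather than a computation, and no separate lemma or construction is needed.
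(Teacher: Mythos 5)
Your proposal is correct and matches the paper exactly: the corollary is stated immediately after the sentence ``By Theorem \ref{SoberLC=CoreCNew} and Proposition \ref{LC sober domain V=S}, we have the following corollary,'' which is precisely your two-step chain. Nothing further is needed.
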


\begin{theorem}\label{Smyth CII V=S} \emph{(\cite{Brecht})} If $X$ is a second-countable sober space, then the upper Vietoris topology and the Scott topology on $\mk (X)$ coincide.
\end{theorem}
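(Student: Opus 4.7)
The plan is to establish both inclusions between $\mathcal{O}(P_S(X))$ and $\sigma(\mk(X))$; the easy direction $\mathcal{O}(P_S(X))\subseteq\sigma(\mk(X))$ is immediate from sober $\Rightarrow$ well-filtered together with Lemma~\ref{WF V less S}(2), so I focus on $\sigma(\mk(X))\subseteq\mathcal{O}(P_S(X))$.

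Fix a Scott-open $\mathcal{U}\subseteq\mk(X)$ and $K\in\mathcal{U}$; the goal is to produce an open $U\subseteq X$ with $K\subseteq U$ and $\Box U\subseteq\mathcal{U}$. Using a countable base $\{B_n\}$ of $X$, form the countable family $\mathcal{V}_K$ of finite unions of basic opens containing $K$. Compactness of $K$ shows $\mathcal{V}_K$ is cofinal in $\Phi(K)$ under $\subseteq$, so enumerating $\mathcal{V}_K=\{V_n\}$ and letting $W_n=V_1\cap\cdots\cap V_n$ yields a decreasing sequence of opens containing $K$ with $\bigcap_n W_n=K$ (by the Hofmann--Mislove theorem together with saturation of $K$). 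Arguing by contradiction, suppose $\Box W_n\not\subseteq\mathcal{U}$ for every $n$. Pick $L_n\in\mk(X)\setminus\mathcal{U}$ with $L_n\subseteq W_n$ and set $M_n:=K\cup L_n\in\mk(X)$. Since $\mathcal{U}^c$ is downward-closed in the Smyth order (equivalently, closed under set-supersets within $\mk(X)$) and $L_n\subseteq M_n$, each $M_n$ lies in $\mk(X)\setminus\mathcal{U}$; moreover $K\subseteq M_n\subseteq W_n$, so $\bigcap_n M_n=K$.

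The aim is to extract from $\{M_n\}$ a Smyth-directed (that is, set-filtered) subfamily $\mathcal{K}\subseteq\mk(X)\setminus\mathcal{U}$ with $\bigcap\mathcal{K}=K$. Once this is done, well-filteredness together with Lemma~\ref{sups in Smyth} ensures that $K$ is the Smyth-supremum of $\mathcal{K}$ in $\mk(X)$, and Scott-openness of $\mathcal{U}$ then forces some member of $\mathcal{K}$ to lie in $\mathcal{U}$, contradicting $\mathcal{K}\subseteq\mathcal{U}^c$. The main obstacle is precisely this extraction: the sequence $\{M_n\}$ need not be set-filtered, and in a sober space that fails to be locally compact, the finite intersections $M_1\cap\cdots\cap M_n$ of compact saturated sets may fail to lie in $\mk(X)$, so one cannot simply take a chain of partial meets. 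To finesse this I would combine the Hofmann--Mislove theorem, passing to the open-filter side $\mathrm{OFilt}(\mathcal{O}(X))$ via $\Phi$, with a Rudin-style diagonal argument: second-countability reduces the directed requirement to countably many steps, and one inductively refines $\{M_n\}$ to a set-decreasing chain of compact saturated sets in $\mathcal{U}^c$, invoking at each stage the Scott-closure properties of $\mathcal{U}^c$ (closure under set-supersets and under set-filtered intersections that lie in $\mk(X)$) to stay inside $\mathcal{U}^c$, with overall intersection equal to $K$.
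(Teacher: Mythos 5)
Your easy direction and your overall strategy (countable cofinal decreasing sequence $W_n$ of open neighbourhoods of $K$, choice of witnesses $L_n\in\Box W_n\setminus\mathcal U$, and the plan to manufacture a set-filtered subfamily of $\mathcal U^c$ with intersection $K$ so that Lemma~\ref{sups in Smyth} and Scott-openness of $\mathcal U$ yield a contradiction) are all correct, and you have correctly located the one real difficulty: the family $\{M_n\}$ is not Smyth-directed, and partial intersections $M_1\cap\cdots\cap M_n$ need not be compact. But at exactly that point the proposal stops being a proof. The appeal to ``a Rudin-style diagonal argument'' that ``inductively refines $\{M_n\}$ to a set-decreasing chain'' names no construction: Rudin-type lemmas extract minimal members from directed families, and there is no directed family here to extract from; nothing in the proposal explains how the inductive refinement produces the next term of the chain, why that term is compact, or why the intersection of the refined chain is still $K$. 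This is a genuine gap, and it is precisely the step that carries the whole theorem.

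The missing idea is to replace intersections by \emph{tail unions}. This is how the paper proves the more general Theorem~\ref{Smyth CI S=V} ($X$ well-filtered and $P_S(X)$ first-countable), from which the present statement follows since sobriety implies well-filteredness and second-countability of $X$ gives second- (hence first-) countability of $P_S(X)$ by Theorem~\ref{Smyth CII}. In your notation: set $G_m=K\cup\bigcup_{n\ge m}L_n$. These are decreasing by construction; each $G_m$ contains $L_m$, hence lies below $L_m$ in the Smyth order and so belongs to $\mathcal U^c$ (your ``closed under set-supersets'' observation); and $\bigcap_m G_m=K$ by the argument you already gave. The only nontrivial point is that $G_m$ is compact, and here your sequence $W_n$ is exactly what is needed: given an open cover of $G_m$, a finite subfamily covers $K$; since $\{\Box W_n\}$ is (as you showed) a neighbourhood base at $K$ in $P_S(X)$, that finite subfamily already contains some $W_{n_0}$, hence contains every $L_l$ with $l\ge n_0$; the finitely many remaining $L_i$ ($m\le i<n_0$) are each compact, so finitely many further cover elements suffice. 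With compactness of the $G_m$ in hand, Lemma~\ref{WF V less S}(1) and Lemma~\ref{sups in Smyth} give $K=\bigvee_{\mk(X)}\{G_m\}\in\mathcal U$, so some $G_q\in\mathcal U$, whence $L_n\in\mathcal U$ for $n\ge q$ (as $L_n\subseteq G_q$ and $\mathcal U$ is an upper set), contradicting the choice of the $L_n$. You were one construction away from a complete proof, but that construction is the heart of the argument and must be supplied explicitly.
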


From Theorem \ref{main result 1}, Theorem \ref{CI+WF is sober} and Theorem \ref{Smyth CII V=S}, we directly deduce the following two results.

\begin{corollary}\label{CII+WF V=S}  If $X$ is a second-countable well-filtered space, then the upper Vietoris topology and the Scott topology on $\mk (X)$ coincide.
\end{corollary}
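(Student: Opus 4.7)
The plan is to reduce this corollary to the sober case handled by Theorem \ref{Smyth CII V=S} via the first-countability bridge of Theorem \ref{CI+WF is sober}. The key observation is that second-countability is a strengthening of first-countability, so a second-countable well-filtered space falls under the hypothesis of Theorem \ref{CI+WF is sober}.

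First, I would note that any second-countable $T_0$ space is first-countable (a countable base yields a countable local base at each point). Then, since $X$ is well-filtered by hypothesis and first-countable by the previous observation, Theorem \ref{CI+WF is sober} promotes $X$ to a sober space. At this stage $X$ satisfies exactly the hypothesis of Theorem \ref{Smyth CII V=S}: it is second-countable and sober. Applying Theorem \ref{Smyth CII V=S} directly yields the coincidence of the upper Vietoris topology and the Scott topology on $\mk (X)$.

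There is essentially no main obstacle, since the corollary is a straightforward chaining of previously stated results; the only thing worth highlighting is that one does not need to invoke Theorem \ref{main result 1} (which would require establishing continuity of $\mk (X)$ and local compactness of $X$). The cleanest route is the two-step reduction sober\,$\Leftarrow$\,well-filtered\,$+$\,first-countable, followed by the second-countable sober case. If desired, one could add a remark that this corollary together with Theorem \ref{main result 1+} implies that, for a second-countable well-filtered space $X$ such that $\mk (X)$ is a continuous semilattice, $X$ is automatically locally compact and has property {\rm Q}, but this is an afterthought rather than part of the proof.
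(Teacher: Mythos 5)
Your proof is correct and matches the paper's intended deduction: the paper derives this corollary by combining Theorem \ref{CI+WF is sober} (second-countable hence first-countable, so well-filtered implies sober) with Theorem \ref{Smyth CII V=S}. Your observation that Theorem \ref{main result 1} is not needed for this particular corollary is also accurate; it is cited in the paper mainly for the companion Corollary \ref{CII+WF LC}.
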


\begin{corollary}\label{CII+WF LC} For a second-countable well-filtered space $X$, the following conditions are equivalent:
\begin{enumerate}[\rm (1)]
	\item $X$ is locally compact.
    \item $\mk (X)$ is a continuous semilattice, and $\xi_X^\sigma : X \longrightarrow \Sigma~\!\! \mk (X)$, $x\mapsto\ua x$, is continuous.
    \item $\mk (X)$ is a continuous semilattice, and $X$ has property Q.
    \item $\mk (X)$ is a continuous semilattice.
    \item $X$ is core compact.
\end{enumerate}
\end{corollary}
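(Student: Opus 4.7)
The plan is to leverage Theorem~\ref{main result 1} as much as possible and use second-countability only to promote the bare continuity of $\mk(X)$ into the stronger hypothesis required by that theorem. Since the corollary assumes well-filteredness, Theorem~\ref{main result 1} applies directly and immediately supplies the equivalences (1) $\Leftrightarrow$ (2) $\Leftrightarrow$ (3) $\Leftrightarrow$ (5), as well as the implications (2) $\Rightarrow$ (4) and (3) $\Rightarrow$ (4). Thus the only implication requiring new argument is (4) $\Rightarrow$ (1).

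For (4) $\Rightarrow$ (1), I would invoke Corollary~\ref{CII+WF V=S}, which tells us that in a second-countable well-filtered space the upper Vietoris topology and the Scott topology on $\mk(X)$ already agree, independent of whether $\mk(X)$ is continuous. So under hypothesis (4), we simultaneously have that $\mk(X)$ is a continuous semilattice and that these two topologies coincide; this is precisely condition (2) of Theorem~\ref{main result 1}. Applying that theorem to the well-filtered space $X$ yields local compactness, closing the loop.

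The main obstacle, such as it is, is simply checking that Corollary~\ref{CII+WF V=S} is available with no additional hypothesis beyond second-countability and well-filteredness; this is guaranteed because it was derived from Theorem~\ref{CI+WF is sober} (first-countable well-filtered spaces are sober, and second-countable implies first-countable) together with Theorem~\ref{Smyth CII V=S} (second-countable sober spaces enjoy $V=S$). Once this is noted, the whole corollary reduces to a bookkeeping exercise cycling through the conditions of Theorem~\ref{main result 1}, and no genuinely new technical ingredient is needed.
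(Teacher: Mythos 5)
Your proposal is correct and follows essentially the same route as the paper, which deduces this corollary directly from Theorem~\ref{main result 1} together with Theorem~\ref{CI+WF is sober} and Theorem~\ref{Smyth CII V=S} (the two ingredients behind Corollary~\ref{CII+WF V=S}). The only substantive step, (4) $\Rightarrow$ (1) via the coincidence of the upper Vietoris and Scott topologies feeding into condition (2) of Theorem~\ref{main result 1}, is exactly the intended argument.
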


In \cite{Hofmann-Lawson} (see \cite[Exercise V-5.25]{redbook}), Hofmann and Lawson constructed a second-countable core compact $T_0$ space $X$ in which every compact subset has empty interior. So $X$ is not locally compact and does not have property Q. By Theorem \ref{Smyth CII}, $P_S(X)$ is second-countable; and by  Corollary \ref{WFcorcomp-sober} or Corollary \ref{CII+WF LC}, $X$ is not well-filtered.

Now we give another main result of this paper.

\begin{theorem}\label{Smyth CI S=V}  If $X$ is a well-filtered space and $P_S(X)$ is first-countable, then the upper Vietoris topology agrees with and the Scott topology on $\mk (X)$.
\end{theorem}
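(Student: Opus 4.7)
The easy half of the statement is Lemma \ref{WF V less S}(2): the upper Vietoris topology is always coarser than the Scott topology on $\mk(X)$ when $X$ is well-filtered. The plan is therefore to prove the reverse inclusion: every Scott open $\mathcal U\subseteq \mk(X)$ is $P_S(X)$-open. I will argue by contradiction. Pick $K\in \mathcal{U}$ such that no basic open $\Box U$ containing $K$ lies in $\mathcal U$. Using first-countability of $P_S(X)$ at $K$, fix a descending base $\{\Box V_n\}_{n\in\mathbb N}$ of neighbourhoods of $K$ with $V_1\supseteq V_2\supseteq\cdots$. A short check (each $\Box U$ containing $K$ contains some $\Box V_n$, and for such $U$ every $x\in V_n$ satisfies $\ua x\subseteq U$, forcing $V_n\subseteq U$; combined with the fact that $K$ is saturated) yields $\bigcap_n V_n=K$. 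By the contradiction hypothesis, for each $n$ there is $K_n\in \mk(X)\setminus\mathcal U$ with $K_n\subseteq V_n$; replacing $K_n$ by $K\cup K_n$ (which remains outside $\mathcal U$ because $\mathcal U$ is an upper set for the Smyth order and $K_n\subseteq K\cup K_n$) we may assume $K\subseteq K_n$.

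The heart of the argument is to recursively build a chain $L_1\supseteq L_2\supseteq \cdots$ in $\mk(X)\setminus\mathcal U$ with $K\subseteq L_n\subseteq V_n$ for all $n$. Set $L_1=K_1$. Given $L_n$, use first-countability of $P_S(X)$ at $L_n$ to pick a descending base $\{\Box W_{n,k}\}_k$ at $L_n$; then $U_n:=V_{n+1}\cap W_{n,n+1}$ is an open neighbourhood of $K$ (both $V_{n+1}$ and $W_{n,n+1}$ contain $K$), so by the standing assumption $\Box U_n\not\subseteq \mathcal U$, giving some $\widehat L\in \mk(X)\setminus\mathcal U$ with $\widehat L\subseteq U_n$. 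From $\widehat L$ (and $K$, and the hierarchy of $W_{i,\cdot}$'s at the already-chosen $L_1,\dots,L_n$) one then extracts $L_{n+1}$ satisfying $K\subseteq L_{n+1}\subseteq L_n\cap V_{n+1}$ and $L_{n+1}\notin\mathcal U$.

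Once the chain $\{L_n\}$ is obtained, apply Lemma \ref{WF V less S}(1): since $\{L_n\}$ is totally ordered by reverse inclusion it is directed in the Smyth order on $\mk(X)$, hence well-filteredness of $X$ gives $L^*:=\bigcap_n L_n\in \mk(X)$ with $L^*=\bigvee_{\mk(X)}\{L_n\}$. From $L_n\subseteq V_n$ one obtains $L^*\subseteq \bigcap_n V_n=K$, and then $\mathcal U$ being an upper set in the Smyth order (Scott openness) together with $K\in \mathcal U$ yields $L^*\in \mathcal U$. Since $\mathcal U$ is Scott open and $\bigvee_{\mk(X)}\{L_n\}=L^*\in \mathcal U$, some $L_n$ must lie in $\mathcal U$, contradicting the construction. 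Therefore the standing hypothesis fails and $\mathcal U$ is $P_S(X)$-open.

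The main obstacle is the recursive step forcing $L_{n+1}\subseteq L_n$. The naïve choice of a witness in $\Box U_n\setminus\mathcal U$ only gives a set contained in $W_{n,n+1}$, not in $L_n$ itself (since $L_n$ need not be open in $X$). What makes the obstacle surmountable is a diagonal use of the first-countable bases at $K$ and at each previously chosen $L_i$: by tightening the open set $U_n$ with further intersections $W_{i,n+1}$ for $i\le n$ and repeatedly invoking the contradiction hypothesis at opens shrinking towards $K$, one is able to squeeze the witnesses into $L_n$. This is the single delicate point where first-countability of $P_S(X)$ is essential; well-filteredness of $X$ then ties everything together at the end via the controlled directed intersection.
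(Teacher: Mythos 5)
There is a genuine gap at the step you yourself flag as ``the single delicate point'': the recursive construction of a chain $L_1\supseteq L_2\supseteq\cdots$ in $\mk(X)\setminus\mathcal U$ with $K\subseteq L_n\subseteq V_n$. The witness $\widehat L$ you produce lies in $V_{n+1}\cap W_{n,n+1}$, where $W_{n,n+1}$ is merely an open \emph{neighbourhood} of $L_n$; no amount of shrinking along the base $\{\Box W_{n,k}\}_k$ forces a compact set contained in such a neighbourhood to be contained in $L_n$ itself, since $L_n$ need not contain any open neighbourhood of $K$ (it may well have empty interior). Nor can you repair this by passing to $\widehat L\cap L_n$: binary intersections of compact saturated sets need not be compact in a well-filtered space (that is coherence, an extra hypothesis), and even when compact, $\widehat L\cap L_n$ sits \emph{above} $\widehat L$ in the Smyth order, so $\widehat L\notin\mathcal U$ gives no control over whether $\widehat L\cap L_n\in\mathcal U$. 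The ``diagonal'' tightening you invoke is therefore not an argument but a restatement of the difficulty, and everything downstream (the application of Lemma \ref{WF V less S}(1) and Scott openness, which is correct as written) depends on a chain you have not produced.

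The paper's proof avoids the recursion entirely. With a descending base $\{\Box U_n\}$ at $K$ and witnesses $K_m\in\Box U_m\setminus\mathcal U$, it sets $G_m=K\cup\bigcup_{n\geq m}K_n$. These sets are nested by construction, satisfy $G_m\sqsubseteq K_m$ (so $G_m\in\mathcal U$ would force $K_n\in\mathcal U$ for $n\ge m$ by upward closure, giving the contradiction), and $\bigcap_m G_m=K$ by saturation of $K$. The only nontrivial point is that each $G_m$ is compact, which is where first-countability enters: given an open cover, finitely many members cover $K$, hence contain some $U_{n_0}$, hence absorb the entire tail $\bigcup_{n\ge n_0}K_n$ since $K_n\subseteq U_n\subseteq U_{n_0}$; the finitely many remaining $K_i$ are handled separately. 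You should replace your chain construction with this tail-union device; your setup (the descending base, $\bigcap_n V_n=K$, the modification $K_n\mapsto K\cup K_n$, and the concluding use of well-filteredness and Scott openness) can then be kept essentially as is.
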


\begin{proof}  By Lemma \ref{WF V less S}, $\mathcal O(P_S(X))\subseteq \sigma(\mk (X))$. Now we show that $\mathcal O(P_S(X))\supseteq \sigma(\mk (X))$. Assume $K\in \mathcal U\in O(P_S(X))$. Since $P_S(X)$ is first-countable and $\{\Box V : V\in \mathcal O(X)\}$ is a base of $P_S(X)$, we have a countable family $\{U_n : n\in \mathbb{N}\}\subseteq \mathcal O(X)$ such that $\{\Box U_n : n\in \mathbb{N}\}$ is a base at $K$ in $P_S(X)$. We can assume that $U_1\supseteq U_2\supseteq ... \supseteq U_n \supseteq U_{n+1} \supseteq ...$ (otherwise, we replace $U_n$ with $\bigcap_{i=1}^n U_i$ for each $n\in \mathbb{N}$). We claim that $\Box U_n\subseteq \mathcal U$ for some $n\in \mathbb{N}$. Assume, on the contrary, that $\Box U_m\nsubseteq \mathcal U$ for all $m\in \mathbb{N}$. For each $m\in \mathbb{N}$, choose $K_m\in \Box U_m\setminus \mathcal U$, and let $G_m=K\bigcup \bigcup_{n\geq m}K_n$.

{Claim 1:} $G_m\in \mk (X)$ for each $m\in \mathbb{N}$.

Suppose that $\{W_j : j\in J\}$ is an open cover of $G_m$ and let $W_J=\bigcup_{j\in J}W_j$. Then $W_J\in \mathcal O(X)$ and $K\subseteq W_J$. By the compactness of $K$, there is $J_1\in J^{(<\omega)}$ such that $K\subseteq W_{J_1}=\bigcup_{j\in J_1}W_j$. Since $\{\Box U_n : n\in \mathbb{N}\}$ is a base at $K$ in $P_S(X)$, there is $n_o\in \mathbb{N}$ such that $K\in \Box U_{n_0}\subseteq \Box W_{J_1}$, that is, $K\subseteq U_{n_0}\subseteq W_{J_1}$. As $(U_n)_{n\in \mathbb{N}}$ is a decreasing sequence, we have that $K_l\subseteq U_l\subseteq U_{n_0}\subseteq W_{J_1}$ for all $l\geq n_0$. By the compactness of $\bigcup\limits_{i=m}^{n_0-1} K_i$, there is $J_2\in J^{(<\omega)}$ such that $\bigcup\limits_{i=m}^{n_0-1} K_i\subseteq W_{J_2}=\bigcup_{j\in J_2}W_j$. Therefore, $G_m=(K\bigcup \bigcup\limits_{n\geq n_0}K_n) \bigcup \bigcup\limits_{i=m}^{n_0-1} K_i\subseteq W_{J_1}\bigcup W_{J_2}$. Thus $G_m\in \mk (X)$.

{Claim 2:} $G_m \supseteq G_{m+1}$ for each $m\in \mathbb{N}$.

{Claim 3:}  $K=\bigcap_{m\in \mathbb{N}}G_m$.

Clearly, $K\subseteq \bigcap_{m\in \mathbb{N}}G_m$. Conversely, assume $x\not\in K$. Then $K\in \Box (X\setminus \da x)$, and whence there is $m_0\in \mathbb{N}$ such that $K\in \Box U_{m_0}\subseteq \Box (X\setminus \da x)$. It follows that $x\not\in G_m$ for all $m\geq m_0$. Hence $x\not\in \bigcap_{m\in \mathbb{N}}G_m$. Therefore, $K=\bigcap_{m\in \mathbb{N}}G_m$.

By the above three claims and Lemma \ref{sups in Smyth}, $K=\bigvee_{\mk (X)}\{G_m : m\in \mathbb{N}\}\in \mathcal U\in \sigma(\mk (X))$, and hence $G_q\in \mathcal U$ for some $q\in \mathbb{N}$. But then $K_n\in \mathcal U$ for all $n\geq q$, a contradiction.

Therefore, $K\in \Box U_n\subseteq \mathcal U$ for some $n\in \mathbb{N}$, and consequently, $\mathcal U\in \mathcal O(P_S(X))$. It is thus proved that the upper Vietoris topology and the Scott topology
on $\mk (X)$ coincide.
\end{proof}

By Theorem \ref{min Compact countable is Smth CI} and Theorem \ref{Smyth CI S=V}, we get the following result.

\begin{corollary}\label{min compact countable CI S=V}  Let $X$ be a first-countable well-filtered space.
If $\mathrm{min}(K)$ is countable for any $K\in \mk (X)$, then the upper Vietoris topology and the Scott topology on $\mk (X)$ coincide.
\end{corollary}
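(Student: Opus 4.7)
The corollary is essentially a direct composition of the two main technical results already proved in the paper, so the plan is to chain them together and identify where each hypothesis is used.

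First, I would apply Theorem \ref{min Compact countable is Smth CI} to the space $X$. That theorem requires exactly two hypotheses: that $X$ be first-countable and that $\mathrm{min}(K)$ be countable for every $K\in \mk(X)$. Both are assumed in the statement of the corollary, so we immediately conclude that the Smyth power space $P_S(X)$ is first-countable.

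With first-countability of $P_S(X)$ in hand, I would next invoke Theorem \ref{Smyth CI S=V}. This requires that $X$ be well-filtered (given) and that $P_S(X)$ be first-countable (just established). The conclusion is precisely that the upper Vietoris topology and the Scott topology on $\mk(X)$ coincide, which is the desired statement.

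So the complete argument is simply: first-countability of $X$ plus countability of minimal sets yields first-countability of $P_S(X)$ by Theorem \ref{min Compact countable is Smth CI}; this combined with well-filteredness yields coincidence of the two topologies by Theorem \ref{Smyth CI S=V}. There is no real obstacle here, since both ingredients are already available; the work has been done in those two theorems. The only thing to note is that the three hypotheses of the corollary match up exactly with the union of the hypotheses of the two theorems being chained, with $P_S(X)$ being first-countable serving as the bridging property.
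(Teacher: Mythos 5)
Your proof is correct and is exactly the paper's argument: the corollary is stated as an immediate consequence of Theorem \ref{min Compact countable is Smth CI} (giving first-countability of $P_S(X)$) followed by Theorem \ref{Smyth CI S=V}. Nothing further is needed.
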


\begin{corollary}\label{countable countable CI S=V}  For a first-countable well-filtered space $X$ in which
all compact subsets are countable, the upper Vietoris topology and the Scott topology on $\mk (X)$ agree.
\end{corollary}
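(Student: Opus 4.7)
The plan is to deduce this corollary directly from the preceding results, since all the heavy lifting has already been done. There are two equivalent routes, and I would sketch both briefly.

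First, the most transparent route is to observe that Corollary~\ref{countable countable CI S=V} is simply the specialization of Corollary~\ref{min compact countable CI S=V} under a strictly stronger hypothesis. Indeed, for any $K \in \mk(X)$, the set $\mathrm{min}(K)$ is a subset of $K$, and $K$ is itself compact. So if every compact subset of $X$ is countable, then in particular $K$ is countable, hence $\mathrm{min}(K) \subseteq K$ is countable. Thus $X$ satisfies the hypothesis of Corollary~\ref{min compact countable CI S=V}, and the conclusion follows at once.

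Alternatively (and equivalently), one can chain Corollary~\ref{Compact countable is Smth CI} with Theorem~\ref{Smyth CI S=V}. The hypothesis that $X$ is first-countable and has only countable compact subsets yields, by Corollary~\ref{Compact countable is Smth CI}, that $P_S(X)$ is first-countable. Since $X$ is also well-filtered, Theorem~\ref{Smyth CI S=V} then delivers the coincidence of the upper Vietoris topology and the Scott topology on $\mk(X)$.

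There is essentially no obstacle here: the statement is a corollary in the strict sense, obtained by strengthening the hypothesis of Corollary~\ref{min compact countable CI S=V} (every compact subset is countable, rather than merely every $\mathrm{min}(K)$ being countable). I would write the proof as a single line invoking Corollary~\ref{min compact countable CI S=V}, noting the containment $\mathrm{min}(K) \subseteq K$ to justify the reduction.
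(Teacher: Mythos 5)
Your proposal is correct and follows exactly the paper's (implicit) route: the corollary is obtained by specializing Corollary~\ref{min compact countable CI S=V} (equivalently, by chaining Corollary~\ref{Compact countable is Smth CI} with Theorem~\ref{Smyth CI S=V}), using the observation that $\mathrm{min}(K)\subseteq K$ is countable whenever the compact set $K$ is. Nothing further is needed.
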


By Theorem \ref{main result 1}, Theorem \ref{Smyth CI S=V} and Corollary \ref{min compact countable CI S=V}, we have the following three corollaries.

\begin{corollary}\label{Smyth CI LC} Let $X$ be a well-filtered space for which $P_S(X)$ is first-countable. Then the following conditions are equivalent:
\begin{enumerate}[\rm (1)]
	\item $X$ is locally compact.
    \item $\mk (X)$ is a continuous semilattice, and $\xi_X^\sigma : X \longrightarrow \Sigma~\!\! \mk (X)$, $x\mapsto\ua x$, is continuous.
    \item $\mk (X)$ is a continuous semilattice, and $X$ has property Q.
    \item $\mk (X)$ is a continuous semilattice.
    \item $X$ is core compact.
\end{enumerate}
\end{corollary}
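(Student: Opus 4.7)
The plan is to observe that four of the five equivalences already live inside Theorem \ref{main result 1}, and that the first-countability hypothesis on $P_S(X)$ is exactly what is needed to free the condition ``$\mk (X)$ is a continuous semilattice'' from the auxiliary assumptions attached to it there (property Q, the coincidence of upper Vietoris and Scott topologies, or the continuity of $\xi_X^\sigma$). So the whole corollary reduces to gluing Theorem \ref{main result 1} to Theorem \ref{Smyth CI S=V}.

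Concretely, I would first observe that since $X$ is well-filtered, Theorem \ref{main result 1} directly gives the equivalences $(1)\Leftrightarrow (2)\Leftrightarrow (3)\Leftrightarrow (5)$; no use of the first-countability of $P_S(X)$ is needed for this part. The implications $(2)\Rightarrow (4)$ and $(3)\Rightarrow (4)$ are trivial, because (4) is just the common conjunct ``$\mk (X)$ is a continuous semilattice'' stripped of the extra hypothesis. Thus the content of the corollary boils down to the single nontrivial implication $(4)\Rightarrow (1)$.

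For $(4)\Rightarrow (1)$, the first-countability of $P_S(X)$ enters through Theorem \ref{Smyth CI S=V}: because $X$ is well-filtered and $P_S(X)$ is first-countable, the upper Vietoris topology and the Scott topology on $\mk (X)$ agree. Combining this with the assumption that $\mk (X)$ is a continuous semilattice, condition (2) of Theorem \ref{main result 1} is satisfied, and therefore $X$ is locally compact. This closes the circle of implications.

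No serious obstacle arises; the role of the argument is purely organizational. The only point that needs care is to state the quoted implications in the form that matches the numbering of Theorem \ref{main result 1}, and to make clear that the first-countability of $P_S(X)$ is invoked only in the single step $(4)\Rightarrow(1)$ via Theorem \ref{Smyth CI S=V}.
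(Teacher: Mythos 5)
Your proposal is correct and matches the paper's intended argument: the paper derives this corollary by citing Theorem \ref{main result 1} together with Theorem \ref{Smyth CI S=V}, exactly the two ingredients you combine, with the first-countability of $P_S(X)$ used only to get the coincidence of the two topologies and thereby close the loop from (4) back to (1).
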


\begin{corollary}\label{min compact countable CI LC} Let $X$ be a first-countable well-filtered space for which $\mathrm{min}(K)$ is countable for any $K\in \mk (X)$. Then the following conditions are equivalent:
\begin{enumerate}[\rm (1)]
	\item $X$ is locally compact.
    \item $\mk (X)$ is a continuous semilattice, and $\xi_X^\sigma : X \longrightarrow \Sigma~\!\! \mk (X)$, $x\mapsto\ua x$, is continuous.
    \item $\mk (X)$ is a continuous semilattice, and $X$ has property Q.
    \item $\mk (X)$ is a continuous semilattice.
    \item $X$ is core compact.
\end{enumerate}
\end{corollary}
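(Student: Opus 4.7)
The plan is to reduce this corollary to Corollary \ref{Smyth CI LC} by verifying that its running hypothesis, namely first-countability of $P_S(X)$, is automatically satisfied in the present setting. Corollary \ref{Smyth CI LC} already asserts the equivalence of the five conditions (1)--(5) under the assumption that $X$ is well-filtered and $P_S(X)$ is first-countable, so once we have first-countability of the Smyth power space the result is immediate.

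To obtain first-countability of $P_S(X)$ I would invoke Theorem \ref{min Compact countable is Smth CI}, which states exactly that for a first-countable $T_0$ space $X$ in which $\mathrm{min}(K)$ is countable for every $K\in\mk(X)$, the Smyth power space $P_S(X)$ is first-countable. A well-filtered space is $T_0$ by definition, so the standing hypotheses of Corollary \ref{min compact countable CI LC} deliver all the ingredients required by Theorem \ref{min Compact countable is Smth CI}.

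Thus the proof is a two-line citation chain: first apply Theorem \ref{min Compact countable is Smth CI} to pass from ``$X$ first-countable and $\mathrm{min}(K)$ countable for all $K\in\mk(X)$'' to ``$P_S(X)$ is first-countable''; then apply Corollary \ref{Smyth CI LC} to the well-filtered space $X$ with first-countable Smyth power space to conclude that (1)--(5) are mutually equivalent. There is no genuine obstacle here, since the substantive content is packaged in the earlier results: Theorem \ref{main result 1} handles the core equivalence between local compactness, core compactness, property Q and continuity of $\mk(X)$ for well-filtered spaces; Theorem \ref{Smyth CI S=V} provides the coincidence of the upper Vietoris and Scott topologies whenever $P_S(X)$ is first-countable; and Theorem \ref{min Compact countable is Smth CI} supplies the countability hypothesis on $P_S(X)$ from the assumed countability of $\mathrm{min}(K)$.
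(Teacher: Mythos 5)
Your proposal is correct and follows essentially the same route as the paper: the paper derives this corollary from Theorem \ref{main result 1}, Theorem \ref{Smyth CI S=V} and Corollary \ref{min compact countable CI S=V}, while you factor the same ingredients through Theorem \ref{min Compact countable is Smth CI} (to get first-countability of $P_S(X)$) and Corollary \ref{Smyth CI LC}. The two citation chains rest on identical underlying results, so there is nothing to add.
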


\begin{corollary}\label{compact countable CI LC} Let $X$ be a first-countable well-filtered space for which all compact subsets of $X$ are countable. Then the following conditions are equivalent:
\begin{enumerate}[\rm (1)]
	\item $X$ is locally compact.
    \item $\mk (X)$ is a continuous semilattice, and $\xi_X^\sigma : X \longrightarrow \Sigma~\!\! \mk (X)$, $x\mapsto\ua x$, is continuous.
    \item $\mk (X)$ is a continuous semilattice, and $X$ has property Q.
    \item $\mk (X)$ is a continuous semilattice.
    \item $X$ is core compact.
\end{enumerate}
\end{corollary}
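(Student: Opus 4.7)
The plan is to obtain Corollary \ref{compact countable CI LC} as an immediate specialization of Corollary \ref{min compact countable CI LC}. The key observation is that for every $K \in \mk(X)$ one has $\mathrm{min}(K) \subseteq K$, so if all compact subsets of $X$ are countable (and in particular $K$ itself is countable), then $\mathrm{min}(K)$ is countable too. Thus the hypothesis of Corollary \ref{min compact countable CI LC} is met, and the equivalence of (1)--(5) transfers verbatim.

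If one prefers a self-contained two-step argument, I would first invoke Corollary \ref{Compact countable is Smth CI}: since $X$ is first-countable and every compact subset of $X$ is countable, the Smyth power space $P_S(X)$ is first-countable. Second, since $X$ is additionally assumed to be well-filtered, I would then apply Corollary \ref{Smyth CI LC} to the pair $(X, P_S(X))$ to conclude the equivalence of the five stated conditions. No new computation is needed: Theorem \ref{Smyth CI S=V} already supplies the crucial coincidence of the upper Vietoris topology and the Scott topology on $\mk(X)$ under first-countability of $P_S(X)$, and Theorem \ref{main result 1} then furnishes the equivalence of local compactness, core compactness, and the relevant continuity/property-Q statements for any well-filtered space whose Smyth power space realizes this coincidence.

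I do not anticipate any genuine obstacle in this proof. All of the substantive work---first-countability transfer from $X$ to $P_S(X)$ under a countability constraint on compact sets, the upper Vietoris/Scott coincidence for first-countable $P_S(X)$, and the internal equivalence of the five domain-theoretic conditions for well-filtered spaces---has already been carried out in the earlier results. The only point worth stating explicitly is the trivial inclusion $\mathrm{min}(K) \subseteq K$ that reduces the present hypothesis to that of Corollary \ref{min compact countable CI LC}.
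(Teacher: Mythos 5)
Your proposal is correct and follows essentially the same route as the paper, which derives this corollary from Theorem \ref{main result 1}, Theorem \ref{Smyth CI S=V} and the preceding countability results; the reduction via $\mathrm{min}(K)\subseteq K$ to Corollary \ref{min compact countable CI LC} is exactly the intended specialization. Your remark that the topological coincidence supplied by Theorem \ref{Smyth CI S=V} is what lets condition (4) stand alone (without explicitly assuming property Q or the agreement of topologies) correctly identifies the one non-trivial point.
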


By Proposition \ref{metric space property}, Lemma \ref{metric space is Smth CI} and Theorem \ref{Smyth CI S=V}, we get the following two results.

\begin{corollary}\label{metric space S=V}  For a metric space $(X, d)$, the upper Vietoris topology coincides with the Scott topology on $\mk ((X, d))$.
\end{corollary}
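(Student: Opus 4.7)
The plan is to apply Theorem \ref{Smyth CI S=V} directly: that theorem says that whenever $X$ is a well-filtered space and $P_S(X)$ is first-countable, the upper Vietoris topology and the Scott topology on $\mk(X)$ coincide. So the work reduces to verifying these two hypotheses for a metric space $(X, d)$, and both ingredients have already been prepared earlier in the paper.

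First, I would invoke Proposition \ref{metric space property}, which states that every metric space is sober. Combining this with the implication chain ``sobriety $\Rightarrow$ well-filteredness $\Rightarrow$ $d$-space'' recorded just after Theorem \ref{Hofmann-Mislove theorem}, we conclude that $(X, d)$ is well-filtered. Second, I would invoke Proposition \ref{metric space is Smth CI}, which establishes that $P_S((X, d))$ is first-countable, with $\{B(K, 1/n) : n \in \mathbb{N}\}$ serving as a countable base at any $K \in \mk((X, d))$ (this being exactly where Proposition \ref{metric space compact sets} on $r$-ball neighborhoods of compact sets is used).

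With both hypotheses of Theorem \ref{Smyth CI S=V} in hand, the desired coincidence of the two topologies on $\mk((X, d))$ is immediate. There is really no obstacle here, since the corollary is a pure assembly of the preceding results; all the substantive content lives in the proof of Theorem \ref{Smyth CI S=V} (which does the delicate construction of a decreasing sequence of compacta $G_m$ with $K = \bigcap_m G_m$) and in Proposition \ref{metric space is Smth CI}. The only minor point worth writing down explicitly in a final proof is the passage from sobriety to well-filteredness, so that the reader can check the cited implication chain.
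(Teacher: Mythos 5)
Your proposal is correct and follows exactly the route the paper takes: the corollary is stated as a consequence of Proposition \ref{metric space property} (sobriety, hence well-filteredness), Proposition \ref{metric space is Smth CI} (first-countability of $P_S((X,d))$), and Theorem \ref{Smyth CI S=V}. Nothing is missing.
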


\begin{corollary}\label{metric space LC} For a metric space $(X, d)$, the following conditions are equivalent:
\begin{enumerate}[\rm (1)]
	\item $(X, d)$ is locally compact.
    \item $\mk (X)$ is a continuous semilattice, and $\xi_{(X, d)}^\sigma : (X, d) \longrightarrow \Sigma~\!\! \mk ((X, d))$, $x\mapsto\{x\}$, is continuous.
    \item $\mk ((X, d))$ is a continuous semilattice, and $(X, d)$ has property Q.
    \item $\mk ((X, d))$ is a continuous semilattice.
    \item $(X, d)$ is core compact.
\end{enumerate}
\end{corollary}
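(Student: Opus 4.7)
The strategy is to reduce the claim to a direct application of Corollary \ref{Smyth CI LC}, which already gives the five-way equivalence for every well-filtered space whose Smyth power space is first-countable. The entire work therefore lies in verifying that a metric space $(X,d)$ satisfies those two hypotheses and in noting a tiny cosmetic adjustment to the map $\xi_{(X,d)}^{\sigma}$.

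First, by Proposition \ref{metric space property}, every metric space is sober; since sobriety implies well-filteredness by the implications recorded after Theorem \ref{Hofmann-Mislove theorem}, the space $(X,d)$ is well-filtered. Second, Proposition \ref{metric space is Smth CI} says that $P_S((X,d))$ is first-countable, using the countable family $\{B(K,1/n):n\in\mathbb{N}\}$ as a neighbourhood base at each $K\in\mk((X,d))$. Hence both hypotheses of Corollary \ref{Smyth CI LC} are in force for $(X,d)$.

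Next I would remark that the specialization order of a metric space is trivial: since $(X,d)$ is Hausdorff, hence $T_1$, one has $\ua x=\{x\}$ for every $x\in X$. Consequently the canonical map $\xi_{(X,d)}^{\sigma}:X\longrightarrow \Sigma\,\mk((X,d))$ from Remark \ref{xi embdding} becomes $x\mapsto\{x\}$, which is precisely the map appearing in statement (2) of the corollary. Thus no reformulation beyond a change of notation is needed to align the two statements.

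Putting these observations together, the five conditions of Corollary \ref{metric space LC} are literally the five conditions of Corollary \ref{Smyth CI LC} specialized to the well-filtered first-countable-Smyth-power space $(X,d)$, so the equivalences $(1)\Leftrightarrow(2)\Leftrightarrow(3)\Leftrightarrow(4)\Leftrightarrow(5)$ follow immediately. I do not foresee any genuine obstacle; the only point that deserves a line of justification is the identification $\ua x=\{x\}$, which is needed so that the map in (2) matches the map $\xi_{(X,d)}^{\sigma}$ supplied by Remark \ref{xi embdding}.
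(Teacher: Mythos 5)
Your proposal is correct and follows essentially the same route as the paper, which obtains this corollary by combining Proposition \ref{metric space property} (metric spaces are sober, hence well-filtered), Proposition \ref{metric space is Smth CI} (first-countability of $P_S((X,d))$), and the general equivalence of Corollary \ref{Smyth CI LC} (via Theorem \ref{Smyth CI S=V} and Theorem \ref{main result 1}). Your added remark that $\ua x=\{x\}$ in a $T_1$ space, so the canonical map takes the stated form, is a correct and worthwhile clarification.
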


Let $\mathbb{R}$ be the set of all real numbers. $\mathbb{R}$ endowed with the topology taking the family $\{[x, y) : x<y\}$ as a base is called the \emph{Sorgenfrey line} and denoted by $\mathbb{R}_l$. Dually, we endow $\mathbb{R}$ with the topology generated by $\{(x, y] : x<y\}$ as a base, and denote the resulting space by $\mathbb{R}_r$. A subset $A\subseteq \mathbb{R}_l$ is called \emph{bounded} if $A\subseteq [-n, n]$ for some $n\in \mathbb{N}$. As one of the "universal counterexamples" in
general topology, $\mathbb{R}_l$ poses many important topological properties (cf. \cite{Engelking, Jean-2013}). In particular, the Sorgenfrey line has the following properties (cf. \cite{Engelking}).

\begin{proposition}\label{Sorgenfrey line property}
\begin{enumerate}[\rm (1)]
\item  $\mathbb{R}_l$ is perfectly normal, first-countable and separable.
 \item  $\mathbb{R}_l$ is not second-countable.
 \item  $\mathbb{R}_l$ is neither compact nor locally compact.
 \item Every compact subset of $\mathbb{R}_l$ is countable.
 \end{enumerate}
\end{proposition}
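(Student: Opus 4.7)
The plan is to verify the four parts using only the half-open base $\{[a,b):a<b\}$ together with two auxiliary facts: each $[a,b)$ is clopen in $\mathbb{R}_l$ (its complement $(-\infty,a)\cup[b,\infty)$ is a union of basic sets), and $(\mathbb{R},\text{usual})$ is Lindel\"{o}f. For part (1), first-countability is witnessed by $\{[x,x+1/n):n\in\mathbb{N}\}$ and separability by $\mathbb{Q}$. Regularity follows from clopenness of basic sets, so the substantive work is perfect normality, which I would derive from Lindel\"{o}fness plus regularity. For Lindel\"{o}fness: given an open cover $\mathcal{U}$, choose for each $x\in\mathbb{R}$ a basic neighborhood $[x,f(x))\subseteq U_x\in\mathcal{U}$; the set $V=\bigcup_{x\in\mathbb{R}}(x,f(x))$ is open in the usual topology and hence is covered by countably many $(x_n,f(x_n))$; and the complement $A=\mathbb{R}\setminus V$ is countable because for $x<x'$ in $A$ one must have $f(x)\le x'$ (else $x'\in V$), so the intervals $(x,f(x))$ for $x\in A$ are pairwise disjoint and inject into $\mathbb{Q}$. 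Then $\{U_{x_n}\}\cup\{U_y:y\in A\}$ is a countable subcover. The same strategy shows every open $U$ is $F_\sigma$: write $U$ as a union of basic clopen sets $[x,f(x))\subseteq U$, extract countably many by the argument above, and take their union.

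For (2), a countable base $\mathcal{B}$ would allow, for each $x\in\mathbb{R}$, the choice of $B_x\in\mathcal{B}$ with $x\in B_x\subseteq[x,x+1)$; then $x=\inf B_x$ makes the assignment $x\mapsto B_x$ injective on the uncountable set $\mathbb{R}$, a contradiction. The heart of (4) is the claim that every compact $K\subseteq\mathbb{R}_l$ is well-ordered by the usual $\le$. Any nonempty closed $F\subseteq K$ is compact and is covered by the open family $\{[x,\infty):x\in F\}$, whose finite subcover exhibits $\min F$; hence every nonempty subset of $K$ has a minimum. A well-ordered subset of $\mathbb{R}$ contains no strictly decreasing sequence, so every non-maximal $x\in K$ has an immediate successor $s(x)$, and assigning a rational from each pairwise disjoint open gap $(x,s(x))$ gives a countable injection, so $K$ is countable.

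Part (3) is then immediate: $\{[-n,n):n\in\mathbb{N}\}$ is an open cover of $\mathbb{R}_l$ with no finite subcover, and any hypothetical compact neighborhood of a point $x$ would have to contain the uncountable set $[x,x+\varepsilon)$ for some $\varepsilon>0$, contradicting (4). The main obstacle is the perfect normality in (1), which packages Lindel\"{o}fness, regularity, and the $F_\sigma$-representation of open sets; the remaining items reduce to elementary manipulations once the well-ordering argument of (4) and the gap-to-rational trick are in hand.
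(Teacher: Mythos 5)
The paper offers no proof of this proposition at all (it is quoted from Engelking), so your write-up stands on its own. Parts (1)--(3) are essentially correct: the Lindel\"{o}f argument, the $F_\sigma$ representation of open sets via clopen basic intervals, the injectivity trick for non-second-countability, and the cover $\{[-n,n):n\in\mathbb{N}\}$ all work. The genuine gap is in (4). Your claim that every compact $K\subseteq\mathbb{R}_l$ is well-ordered by $\le$ is false: $K=\{0\}\cup\{1/n:n\geq 1\}$ is compact in $\mathbb{R}_l$ (any open set containing $0$ contains some $[0,\varepsilon)$ and hence all but finitely many of the points $1/n$), yet the subset $\{1/n:n\geq 1\}$ has no minimum, and $0$ is non-maximal with no immediate successor in $K$. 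The flaw is the inference ``every nonempty \emph{closed} subset of $K$ has a minimum, hence every nonempty subset does'': the minimum of the closure of $S$ may fail to lie in $S$, exactly as in this example. Consequently the successor map $s(x)$ on which your gap-to-rational counting rests need not exist.

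The statement is of course still true; the repair is to place the rational gaps on the \emph{left} of each point rather than on the right. Compactness of $K$ forbids accumulation from the left: if $(x-\varepsilon,x)\cap K\neq\emptyset$ for every $\varepsilon>0$, choose $x_n\in K$ strictly increasing to $x$; then $\{[x,\infty)\}\cup\{(-\infty,x_n):n\in\mathbb{N}\}$ is an open cover of $K$ admitting no finite subcover. (This is precisely the content of Lemma~\ref{compact set in Sorgenfrey line}, which the paper quotes right after this proposition.) Hence for each $x\in K$ there is $q_x\in\mathbb{Q}$ with $q_x<x$ and $(q_x,x)\cap K=\emptyset$, and $x\mapsto q_x$ is injective on $K$: if $x<x'$ and $q_x=q_{x'}=q$, then $x\in(q,x')\cap K=\emptyset$, a contradiction. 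So $K$ is countable, and your deduction of (3) from (4) then goes through unchanged.
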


\begin{lemma}\label{compact set in Sorgenfrey line} \emph{(\cite{xu90})} For a subset $A$ of $\mathbb{R}_l$, the following two conditions are equivalent:
\begin{enumerate}[\rm (1)]
\item $A$ is compact in $\mathbb{R}_l$.
\item $A$ is a bounded closed subset of $\mathbb{R}_l$, and $A$ has no accumulation point in $\mathbb{R}_r$ \emph{(}that is, there is no point $x\in \mathbb{R}$ such that $x\in \cl_{\mathbb{R}_r} (A\setminus \{x\})$\emph{)}.
\end{enumerate}
\end{lemma}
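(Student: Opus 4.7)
The plan is to prove the two implications separately, with one guiding observation: a point $x\in\mathbb{R}$ is an accumulation point of $A$ in $\mathbb{R}_r$ if and only if it is a strict left-accumulation point of $A$ in the usual Euclidean topology, since a basic $\mathbb{R}_r$-neighbourhood of $x$ has the form $(c,x]$ with $c<x$ and $(c,x]\setminus\{x\}=(c,x)$.

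For (1) $\Rightarrow$ (2), I would read boundedness off from the $\mathbb{R}_l$-open cover $\{[-n,n):n\in\mathbb{N}\}$ of $\mathbb{R}$, and closedness from the fact that $\mathbb{R}_l$ is Hausdorff (the points $x<y$ are separated by $[x,y)$ and $[y,y+1)$), so compact subsets are closed. To rule out $\mathbb{R}_r$-accumulation I would argue by contradiction: if some $x$ were such a point, then $\{(-\infty,y):y<x\}\cup\{[x,x+1)\}$, with each $(-\infty,y)=\bigcup_n[-n,y)$ being $\mathbb{R}_l$-open, is an $\mathbb{R}_l$-open cover of $A$; any finite subcover would confine $A$ to $(-\infty,y_0)\cup[x,x+1)$ for some $y_0<x$, directly contradicting the hypothesis that some $a\in A\setminus\{x\}$ lies in $(y_0,x]$, since then $y_0<a<x$ lies outside this union.

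For (2) $\Rightarrow$ (1) I would first observe that the hypotheses force $A$ to be compact in the usual Euclidean topology. Letting $A^l$ and $A^r$ denote the sets of usual left- and right-accumulation points of $A$, ``no $\mathbb{R}_r$-accumulation'' gives $A^l=\emptyset$, while ``$A$ closed in $\mathbb{R}_l$'' gives $A^r\subseteq A$ (for every $x\notin A$, openness of the complement in $\mathbb{R}_l$ yields $b>x$ with $[x,b)\cap A=\emptyset$, so $x\notin A^r$); hence the usual closure of $A$ equals $A\cup A^l\cup A^r=A$, and together with boundedness Heine--Borel applies. Now given an $\mathbb{R}_l$-open cover $\mathcal U$ of $A$, for each $x\in A$ I pick some $U_x\in\mathcal U$ containing $x$, and, using openness of $U_x$, some $b_x>x$ with $[x,b_x)\subseteq U_x$; by left-isolation at $x$, I also pick $\varepsilon_x>0$ with $(x-\varepsilon_x,x)\cap A=\emptyset$. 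Setting $V_x=(x-\varepsilon_x,b_x)$ gives a usual-open neighbourhood of $x$ satisfying $V_x\cap A\subseteq[x,b_x)\subseteq U_x$; then $\{V_x:x\in A\}$ is a usual-open cover of $A$, and a finite subcover $V_{x_1},\ldots,V_{x_n}$ from Heine--Borel transfers to the finite subcover $\{U_{x_1},\ldots,U_{x_n}\}$ of $\mathcal U$.

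The main obstacle I anticipate is the bookkeeping in the final reduction: pairing each left-gap $(x-\varepsilon_x,x)$ with a right-tail $[x,b_x)\subseteq U_x$ to produce a Euclidean neighbourhood of $x$ that captures no extra points of $A$ is what converts the $\mathbb{R}_l$-covering problem into a Euclidean one where Heine--Borel closes the argument, and care is needed to verify that ``no $\mathbb{R}_r$-accumulation'' is really equivalent to $A^l=\emptyset$ and that this together with $A$ closed in $\mathbb{R}_l$ forces $\cl_{\mathbb R} A=A$.
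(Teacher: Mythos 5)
The paper does not actually prove this lemma; it is quoted from \cite{xu90} without proof, so there is no in-paper argument to compare against. Your proof is essentially the standard one and is correct in substance: the identification of $\mathbb{R}_r$-accumulation points with Euclidean left-accumulation points, the reduction of $(2)\Rightarrow(1)$ to Heine--Borel via the neighbourhoods $V_x=(x-\varepsilon_x,b_x)$ with $V_x\cap A\subseteq[x,b_x)\subseteq U_x$, and the verification that $A^l=\emptyset$ together with $\mathbb{R}_l$-closedness forces $\cl_{\mathbb{R}}A=A$ all check out.

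One step in $(1)\Rightarrow(2)$ is wrong as literally written: the collection $\{(-\infty,y):y<x\}\cup\{[x,x+1)\}$ is generally \emph{not} a cover of $A$, since it misses any points of $A$ lying in $[x+1,\infty)$, so you cannot invoke compactness on it. The repair is immediate: use $[x,+\infty)=\bigcup_{n}[x,x+n)$, which is $\mathbb{R}_l$-open, in place of $[x,x+1)$. Then $\{(-\infty,y):y<x\}\cup\{[x,+\infty)\}$ covers all of $\mathbb{R}$, a finite subcover confines $A$ to $(-\infty,y_0)\cup[x,+\infty)$ for some $y_0<x$, and the assumed $\mathbb{R}_r$-accumulation at $x$ produces $a\in A$ with $y_0<a<x$ lying outside this union, giving the desired contradiction. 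With that one-line fix the argument is complete.
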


\begin{example}\label{Sorgenfrey line S=V} Consider the Sorgenfrey line $\mathbb{R}_l$. Then by Theorem \ref{main result 1}, Corollary \ref{Compact countable is Smth CI}, Theorem \ref{Smyth CI S=V}, Proposition \ref{Sorgenfrey line property} and Lemma \ref{compact set in Sorgenfrey line}, we have
\begin{enumerate}[\rm (1)]
\item $\mathbb{R}_l$ is first-countable and Hausdorff, and hence sober.
\item $P_S(\mathbb{R}_l)$ is first-countable.
\item $\ii K=\emptyset $ for any $K\in \mk (\mathbb{R}_l)$, and whence $P_S(\mathbb{R}_l)$ is not locally compact.
\item the upper Vietoris topology and the Scott topology on $\mk (\mathbb{R}_l)$ agree.
\item $K_1 \not\ll K_2$ for any $K_1, K_2\in \mk (\mathbb{R}_l)$, so $\mk (\mathbb{R}_l)$ is not continuous and $\mathbb{R}_l$ has property {\rm Q}.
\item $\xi_{\mathbb{R}_l}^\sigma : \mathbb{R}_l \longrightarrow \Sigma~\!\! \mk (\mathbb{R}_l)$, $x\mapsto \{x\}$, is continuous.
\item $\mathbb{R}_l$ is not core compact.
\end{enumerate}
\end{example}

\begin{example}\label{example 1.25 V not=S} Let $X$ be the space in Example \ref{example 1.25}. Then we have (a) $|X|=\omega$ and $X$ is sober; (b) $\mk (X)=X^{(<\omega)}\setminus \{\emptyset\}$; (c) $\mk (X)$ is an algebraic semilattice; and (d) the upper Vietoris topology and the Scott topology on $\mk (X)$ does not coincide or, equivalently, $\sigma (\mk (X))\nsubseteq \mathcal O(P_S(X))$. By Proposition \ref{Smyth CI} (or Theorem \ref{Smyth CI S=V}) and Corollary \ref{countable countable CI S=V}, Neither $P_S(X)$ nor $X$ is first-countable (cf. \cite[Corollary 3.6.17]{Engelking}).
\end{example}

Finally, we pose the following question.

\begin{question}\label{CI WF S=V?}  For a first-countable well-filtered space $X$, does the upper Vietoris topology and the Scott topology on $\mk (X)$ coincide?
\end{question}

\noindent{\bf References}

%\end{CJK*}
\end{document}